\title[A geophysical system with an ice-sheet]{A geophysical free-boundary system modeling an ice-sheet interacting with an ocean}
\author[M.~Hieber]{Matthias Hieber}
\address{Fachbereich Mathematik, Technische Universit\"at Darmstadt, Schlossgartenstr.~7, 64289 Darmstadt, Germany}
\email{hieber@mathematik.tu-darmstadt.de}
\author[I.~Kukavica]{Igor Kukavica}
\address{Department of Mathematics, University of Southern California, Los Angeles, CA 90089}
\email{kukavica@usc.edu}
\author[A.~Tuffaha]{Amjad Tuffaha}
\address{Department of Mathematics and Statistics, American University
of Sharjah, Sharjah, UAE}
\email{atufaha\char'100aus.edu}
\author[Q.~Xu]{Qi Xu}
\address{Department of Mathematics, University of Southern California, Los Angeles, CA 90089}
\email{xuqi@usc.edu}
  \chardef\forshowkeys=0
  \chardef\showllabel=0
  \chardef\refcheck=0
  \chardef\sketches=0
\begin{document}
\def\YY{X}
\def\OO{\mathcal O}
\def\SS{\mathbb S}
\def\CC{\mathbb C}
\def\RR{\mathbb R}
\def\ZZ{\mathbb Z}
\def\HH{\text{H}}
\def\RSZ{\mathcal R}
\def\LL{\mathcal L}
\def\SL{\LL^1}
\def\ZL{\LL^\infty}
\def\GG{\mathcal G}
\def\tt{\langle t\rangle}
\def\erf{\mathrm{Erf}}
\def\mgt#1{\textcolor{magenta}{#1}}
\def\ff{\rho}
\def\gg{G}
\def\sqrtnu{\sqrt{\nu}}
\def\ww{w}
\def\ft#1{#1_\xi}
\def\lec{\lesssim}
\def\gec{\gtrsim}
\def\bard{\bar{\partial}}
\renewcommand*{\Re}{\ensuremath{\mathrm{{\mathbb R}e\,}}}
\renewcommand*{\Im}{\ensuremath{\mathrm{{\mathbb I}m\,}}}
\ifnum\showllabel=1
 \def\llabel#1{\marginnote{\color{lightgray}\rm\small(#1)}[-0.0cm]\notag}
\else
 \def\llabel#1{\notag}
\fi
\newcommand{\norm}[1]{\left\|#1\right\|}
\newcommand{\nnorm}[1]{\lVert #1\rVert}
\newcommand{\abs}[1]{\left|#1\right|}
\newcommand{\NORM}[1]{|\!|\!| #1|\!|\!|}
\newtheorem{theorem}{Theorem}[section]
\newtheorem{Theorem}{Theorem}[section]
\newtheorem{corollary}[theorem]{Corollary}
\newtheorem{Corollary}[theorem]{Corollary}
\newtheorem{proposition}[theorem]{Proposition}
\newtheorem{Proposition}[theorem]{Proposition}
\newtheorem{Lemma}[theorem]{Lemma}
\newtheorem{lemma}[theorem]{Lemma}
\theoremstyle{definition}
\newtheorem{definition}{Definition}[section]
\newtheorem{Remark}[Theorem]{Remark}
\def\theequation{\thesection.\arabic{equation}}
\numberwithin{equation}{section}
\definecolor{mygray}{rgb}{.6,.6,.6}
\definecolor{myblue}{rgb}{9, 0, 1}
\definecolor{colorforkeys}{rgb}{1.0,0.0,0.0}
\newlength\mytemplen
\newsavebox\mytempbox
\makeatletter
\newcommand\mybluebox{%
    \@ifnextchar[
       {\@mybluebox}%
       {\@mybluebox[0pt]}}
\def\@mybluebox[#1]{%
    \@ifnextchar[
       {\@@mybluebox[#1]}%
       {\@@mybluebox[#1][0pt]}}
\def\@@mybluebox[#1][#2]#3{
    \sbox\mytempbox{#3}%
    \mytemplen\ht\mytempbox
    \advance\mytemplen #1\relax
    \ht\mytempbox\mytemplen
    \mytemplen\dp\mytempbox
    \advance\mytemplen #2\relax
    \dp\mytempbox\mytemplen
    \colorbox{myblue}{\hspace{1em}\usebox{\mytempbox}\hspace{1em}}}
\makeatother
\def\PPs{\mathcal{P}_{\text{s}}}
\def\JJ{J}
\def\TT{\tilde{T}}
\def\bold{\colu {\bf OLD:}}
\def\eold{\colb {}}
\def\Omegaatm{\Omega_{\text{atm}}}
\def\Omegaoce{\Omega_{\text{oce}}}

\def\uatm{u_{\text{atm}}}
\def\vatm{v_{\text{atm}}}
\def\watm{w_{\text{atm}}}
\def\patm{p_{\text{atm}}}
\def\uoce{u_{\text{oce}}}
\def\voce{v_{\text{oce}}}
\def\vatm{v_{\text{atm}}}
\def\woce{w_{\text{oce}}}
\def\watm{w_{\text{atm}}}
\def\poce{p_{\text{oce}}}
\def\patm{p_{\text{atm}}}

\def\nablah{\nabla_{\text{H}}}
\def\nablaa{\nabla_{a}}
\def\nablaah{\nabla_{a,\text{H}}}
\def\diva{\div_{a}}
\def\divah{\div_{a,\text{H}}}
\def\divh{\div_{\text{H}}}
\def\rr{r}
\def\weaks{\text{\,\,\,\,\,\,weakly-* in }}
\def\inn{\text{\,\,\,\,\,\,in }}
\def\cof{\mathop{\rm cof\,}\nolimits}
\def\Dn{\frac{\partial}{\partial N}}
\def\Dnn#1{\frac{\partial #1}{\partial N}}
\def\tdb{\tilde{b}}
\def\tda{b}
\def\qqq{u}
\def\lat{\Delta_2}
\def\biglinem{\vskip0.5truecm\par==========================\par\vskip0.5truecm}
\def\inon#1{\hbox{\ \ \ \ \ \ \ }\hbox{#1}}                
\def\onon#1{\inon{on~$#1$}}
\def\inin#1{\inon{in~$#1$}}
\def\FF{F}
\def\andand{\text{\indeq and\indeq}}
\def\ww{w(y)}
\def\ll{{\color{red}\ell}}
\def\ee{\epsilon_0}
\def\startnewsection#1#2{ \section{#1}\label{#2}\setcounter{equation}{0}}   
\def\nnewpage{ }
\def\sgn{\mathop{\rm sgn\,}\nolimits}    
\def\Tr{\mathop{\rm Tr}\nolimits}    
\def\div{\mathop{\rm div}\nolimits}
\def\curl{\mathop{\rm curl}\nolimits}
\def\dist{\mathop{\rm dist}\nolimits}  
\def\supp{\mathop{\rm supp}\nolimits}
\def\indeq{\quad{}}           
\def\period{.}                       
\def\semicolon{\,;}                  
\def\nts#1{{\cor #1\cob}}
\def\colr{\color{red}}
\def\colrr{\color{black}}
\def\colb{\color{black}}
\def\coly{\color{lightgray}}
\definecolor{colorgggg}{rgb}{0.1,0.5,0.3}
\definecolor{colorllll}{rgb}{0.0,0.7,0.0}
\definecolor{colorhhhh}{rgb}{0.3,0.75,0.4}
\definecolor{colorpppp}{rgb}{0.7,0.0,0.2}
\definecolor{coloroooo}{rgb}{0.45,0.0,0.0}
\definecolor{colorqqqq}{rgb}{0.1,0.7,0}
\def\colg{\color{colorgggg}}
\def\collg{\color{colorllll}}
\def\cole{\color{black}}
\def\coleo{\color{colorpppp}}
\def\colu{\color{blue}}
\def\colc{\color{colorhhhh}}
\def\colW{\colb}   
\definecolor{coloraaaa}{rgb}{0.6,0.6,0.6}
\def\colw{\color{coloraaaa}}
\def\comma{ {\rm ,\qquad{}} }            
\def\commaone{ {\rm ,\quad{}} }          
\def\nts#1{{\color{blue}\hbox{\bf ~#1~}}} 
\def\ntsf#1{\footnote{\color{colorgggg}\hbox{#1}}} 
\def\blackdot{{\color{red}{\hskip-.0truecm\rule[-1mm]{4mm}{4mm}\hskip.2truecm}}\hskip-.3truecm}
\def\bluedot{{\color{blue}{\hskip-.0truecm\rule[-1mm]{4mm}{4mm}\hskip.2truecm}}\hskip-.3truecm}
\def\purpledot{{\color{colorpppp}{\hskip-.0truecm\rule[-1mm]{4mm}{4mm}\hskip.2truecm}}\hskip-.3truecm}
\def\greendot{{\color{colorgggg}{\hskip-.0truecm\rule[-1mm]{4mm}{4mm}\hskip.2truecm}}\hskip-.3truecm}
\def\cyandot{{\color{cyan}{\hskip-.0truecm\rule[-1mm]{4mm}{4mm}\hskip.2truecm}}\hskip-.3truecm}
\def\reddot{{\color{red}{\hskip-.0truecm\rule[-1mm]{4mm}{4mm}\hskip.2truecm}}\hskip-.3truecm}
\def\tdot{{\color{green}{\hskip-.0truecm\rule[-.5mm]{3mm}{3mm}\hskip.2truecm}}\hskip-.1truecm}
\def\vert{\Vert}
\def\gdot{\greendot}
\def\bdot{\bluedot}
\def\pdot{\purpledot}
\def\ydot{\cyandot}
\def\rdot{\cyandot}
\def\fractext#1#2{{#1}/{#2}}
\def\ii{\hat\imath}
\def\fei#1{\textcolor{blue}{#1}}
\def\vlad#1{\textcolor{cyan}{#1}}
\def\igor#1{\text{{\textcolor{colorqqqq}{#1}}}}
\def\igorf#1{\footnote{\text{{\textcolor{colorqqqq}{#1}}}}}
\newcommand{\p}{\partial}
\newcommand{\UE}{U^{\rm E}}
\newcommand{\PE}{P^{\rm E}}
\newcommand{\KP}{K_{\rm P}}
\newcommand{\uNS}{u^{\rm NS}}
\newcommand{\vNS}{v^{\rm NS}}
\newcommand{\pNS}{p^{\rm NS}}
\newcommand{\omegaNS}{\omega^{\rm NS}}
\newcommand{\uE}{u^{\rm E}}
\newcommand{\vE}{v^{\rm E}}
\newcommand{\pE}{p^{\rm E}}
\newcommand{\omegaE}{\omega^{\rm E}}
\newcommand{\ua}{u_{\rm   a}}
\newcommand{\va}{v_{\rm   a}}
\newcommand{\omegaa}{\omega_{\rm   a}}
\newcommand{\ue}{u_{\rm   e}}
\newcommand{\ve}{v_{\rm   e}}
\newcommand{\omegae}{\omega_{\rm e}}
\newcommand{\omegaeic}{\omega_{{\rm e}0}}
\newcommand{\ueic}{u_{{\rm   e}0}}
\newcommand{\veic}{v_{{\rm   e}0}}
\newcommand{\up}{u^{\rm P}}
\newcommand{\vp}{v^{\rm P}}
\newcommand{\tup}{{\tilde u}^{\rm P}}
\newcommand{\bvp}{{\bar v}^{\rm P}}
\newcommand{\omegap}{\omega^{\rm P}}
\newcommand{\tomegap}{\tilde \omega^{\rm P}}
\renewcommand{\up}{u^{\rm P}}
\renewcommand{\vp}{v^{\rm P}}
\renewcommand{\omegap}{\Omega^{\rm P}}
\renewcommand{\tomegap}{\omega^{\rm P}}
\newcommand{\lot}{\text{l.o.t.}}

\begin{abstract}
We consider a free-boundary model for the ice-sheet interacting with an ocean.
The model captures the coupling between a viscous geophysical fluid and an elastic interface through kinematic and dynamic boundary conditions that account for hydrodynamic loading.
Using the ALE formulation, we derive a system on a fixed reference domain and establish local-in-time \emph{a priori} estimates for strong solutions with initial data in $H^2$. The main analytical difficulties arise from the nonlinear terms involving vertical derivatives and from high-order pressure contributions on the interface.
\colb
\end{abstract}

\colb
\maketitle
\setcounter{tocdepth}{2} 
\tableofcontents
\startnewsection{Introduction}{sec01}
In this paper, we consider a mathematical model for the interaction between ocean flow dynamics and a large structure. This model comprises the free boundary viscous primitive equations typically used to model fluid flow when the horizontal scales are by orders of magnitude larger than the vertical scale. The structural dynamics of the large structure are modeled by the Euler-Bernoulli equations describing the vertical deflection, while the interaction is captured by the velocity-matching boundary condition and the hydrodynamic loading forces in the structure equation.

The model describes the dynamics of a large floating structure in the
ocean. The Euler-Bernoulli thin beam theory is widely used in the
modeling of offshore or ocean engineering structures; see, for
instance, \cite{HC, PB, PV, XEK, XW}.  These structures could be
man-made structures or natural formations such as ice sheets. While
more intricate models for sea ice, incorporating viscoplastic and
thermoelastic effects, are readily available, e.g.~\cite{H, Hun},
other simpler thin plate or beam-theory based formulations have also
been considered \cite{Hu, HRLTCT, K, P, S, SDWRL, THAB}, where it is
common to use the Euler-Bernoulli beam for the deflection of the ice
floe.  On the other hand, the primitive equations are widely used to
model ocean and atmosphere dynamics. For example, the  Max-Planck
Global Sea/Ice Ocean model \cite{MHJLR}
(see also \cite{J.})
combines the primitive equations with the Hibler model of sea ice dynamics. A comprehensive review of the mathematical modeling of very large floating structures (VLFS) in the scientific community can be found in~\cite{KPV}. 

The main purpose of this paper is to study local-in-time solutions to the free boundary primitive equations when the dynamics of the free boundary are governed by the Euler-Bernoulli equations. One can find extensive mathematical literature on the primitive equations; see e.g.~\cite{BH, CLT1,  CLT2, CT, HIRZ, HK, IKZ, KZ, LT1, LTW1, LTW2, LTW3,LTW4,PTZ}, and for the inviscid version, see
\cite{CINT,KMVW,KTVZ,LT2,MW,R,W} to mention only few references.

. However, currently, the only mathematical treatment of the primitive equations with a free-boundary is due to Li and Liang~\cite{LL}. 

The problem we study lies broadly in a class of free boundary viscous fluid-structure interactions, which involves the study of a fluid (typically modeled by the Navier-Stokes equations) interacting with elastic structure governed by a systems of elasticity. The mathematical study of such models traces back to~\cite{CDEG, DEGL, GM} and \cite{B} where weak solutions were established in both 2D and 3D for a model comprising the Navier-Stokes equations coupled with a damped beam equation governing the motion of the moving interface. Strong local-in-time solutions and global-in-time solutions for small data up to contact were also established by several authors in different settings and configurations
\cite{CCS, CS, KLT1,KLT2,L1, L2, GH, GHL, SS, SSu}, with and without damping in the structure. Other models involving compressible flows were also studied more recently; see e.g.~\cite{BS, HIRZ, Tr, TW, Mi}.  More intricate models involving the coupling of the Navier-Stokes equations with linear and nonlinear Koiter shell models governing interface dynamics were also considered by several authors, see e.g.~\cite{L, LR, MRR, MC1, MC2, MS}.  For inviscid flow-structure interaction problems where fluid dynamics are modeled by potential flow equations or the Euler equations, we refer the reader to more recent works \cite{AS,BKMT, KNT, KT1, KT2, LA, WY}. 
 
Our main result in this paper is the establishment of local-in-time a priori estimates satisfied by strong solutions when the initial data for the fluid velocity and the interface displacement are in the $H^{2}$ Sobolev space. We transform the PDE system to a fixed domain using the ALE (Arbitrary-Lagrangian-Eulerian) transformation. It is known that the primitive equations derived from the Navier-Stokes equations under the small vertical-to-horizontal length-scale assumption exhibit one derivative loss of regularity in the vertical velocity component.

This intrinsic loss of regularity makes the free-boundary coupling particularly delicate. In particular, under the ALE transformation, the nonlinear terms involving the vertical derivative of the vertical velocity $\partial_3 v_3$ cannot be eliminated directly by the incompressibility condition. As a result, higher-order vertical derivatives appear in the transformed system, which prevents a direct closure of the energy estimates. To summarize, compared to the primitive equations in a fixed domain, the free-boundary primitive-equation exhibit a one-derivative loss.

To overcome this difficulty, we design a series of anisotropic $L^2$-based estimates that carefully combine horizontal and temporal derivatives, together with interpolation inequalities, to effectively minimize the order of the terms containing vertical spatial derivatives. A second difficulty stems from the pressure estimate: in the ALE formulation, additional high-order terms emerge that cannot be absorbed by the dissipation alone, particularly the first (boundary) term in~\eqref{EQ89}. Instead of relying solely on an elliptic estimate for the pressure, we exploit precise bounds on the ALE coefficients and integrate the pressure estimate into the hierarchy of $L^2$-energy inequalities. This strategy allows the pressure contribution to be systematically canceled within the overall energy framework, thereby closing the estimates and ensuring the robustness of the a priori bounds. These precise bounds are dictated by the assumption of small time, which maintains the interface close to the initial configuration.

Another feature of the problem under consideration is the elliptic system satisfied by the 2D pressure variable, from which we derive pressure estimates in terms of the height function. 
A particular anomaly arises from the variable Laplacian term of the vertical velocity, a high-order term, which appears in the pressure equation, and from which one can obtain only $L^2$ in time control of the $H^{2}$ space norm of pressure, but no pointwise in time regularity in any Sobolev space norm. This further complicates the time derivative estimates, since no control of the time derivative of the pressure is possible, and this results in pointwise terms that could only be absorbed by a careful reliance on the smallness of the variable coefficients for small time, and eventually leads to a delicate non-standard type Gronwall inequality.  Additionally, careful Agmon-Douglis-Nirenberg type estimates have to be derived to fit the situation at hand in order to obtain the elliptic estimate required  to recover the full regularity of the velocity.

The paper is structured as follows. Section~\ref{sec02} introduces the model and the notation.  The following section reformulates the system in the ALE variables and presents the main result, Theorem~\ref{T01}. Section~\ref{sec6} presents several preliminary lemmas.  Sections~\ref{sec13}, \ref{sec14}, \ref{sec15}, and~\ref{sec16} contain the tangential, elliptic, pressure, and time-derivative bounds, respectively, with the summaries given in Lemmas~\ref{L06}, \ref{L05}, \ref{L07}, and~\ref{L08}.  Finally, Section~\ref{sec17} provides the proof of the main theorem.

\section{The model}\label{sec02}
We consider a system of PDEs consisting of the primitive equations of the ocean, defined on an evolving domain $\Omega(t)\subseteq \mathbb{R}^{3} $, coupled with a fourth-order Euler-Bernoulli equation describing the dynamics of the moving two-dimensional interface. The variables for the flow are the velocity vector $(v,w)$ and a scalar fluid pressure~$p$; here, $v=(v_{1}, v_{2})$ represents the horizontal components of the velocity while  $w$ denotes the vertical velocity. The primitive equations then read
  \begin{align}
  \begin{split}
  &
  v_{t} - \Delta v + (v \cdot \nabla_{\HH}) v + w \partial_{z} v+ \nablah p = 0
  \inin{\Omega(t)},
  \\&
  \div_{\HH} v = -\partial_{z}w 
  \inin{\Omega(t)},
  \\&
  \partial_{z} p =0
  \inin{\Omega(t)}  
   .
  \end{split}
   \label{EQ01}
  \end{align}
Here, the operator $\nablah$ denotes the horizontal gradient $(\partial_{1}, \partial_{2})$.
The domain $\Omega(t) $ is defined as 
  \begin{align}
   \Omega(t)
     = \bigl\{ (x_{1}, x_{2}, z)
        :  (x_{1}, x_{2} ) \in \mathbb{T}^{2},   0 \leq z \leq h( x_{1},x_{2},t)
       \bigr\}
   ,
   \label{EQ02}
  \end{align}
where $h$ represents the height of the interface.
We denote by $\Gamma_{1}$ the top boundary parameterized by $h$, i.e.,
  \begin{align}
   \Gamma_{1}(t)= \{ (x_{1}, x_{2}, h(x_{1},x_{2}, t)) :  (x_{1},x_{2} ) \in \mathbb{T}^{2} \}
   ,
   \label{EQ03}
  \end{align}
while the bottom boundary of $\Omega(t)$, denoted by $\Gamma_{0}$,  is fixed and is defined as
  \begin{align}
   \Gamma_{0}= \bigl\{(x_{1}, x_{2}, 0) :  (x_{1},x_{2} ) \in \mathbb{T}^{2} \bigr\}.
   \label{EQ04}
  \end{align}
On the top, we impose the no-slip boundary condition on the horizontal
velocity component, i.e.,
  \begin{align}
v =0    \onon{\Gamma_{1}(t) \times [0,T]} .
   \label{EQ05}
  \end{align}
We denote by $\nu$ the unit dynamic normal on~$\Gamma_{1}(t)$ which is given explicitly by
  \begin{align}
   \nu(t) =  \frac{1}{ \sqrt{|\nablah h|^{2}+1}}  (-\partial_{1} h, -\partial_{2} h ,1)^{T}.
   \label{EQ06}
  \end{align}
The dynamics of the free surface are governed by a fourth order structure equation, with the fluid pressure acting as a lifting force,
  \begin{align}
     h_{tt}+ \Delta_{\HH}^{2} h = p  \onon{\Gamma_{1}(t) \times [0,T]}  
   ,
   \label{EQ07}
  \end{align}
where $p$ is evaluated at the point~$(x_1,x_2,h)$.
In addition, the  kinematic condition provides the velocity matching between the fluid and the structure
and is given by 
  \begin{align}
   h_{t}  =w  \onon{\Gamma_{1}(t) \times [0,T]}.
   \label{EQ08}
  \end{align}
On the rigid bottom boundary, we impose the no-slip boundary condition
  \begin{align}
   (v,w) =(0,0,0)    \onon{\Gamma_{0} \times [0,T]}   
   .
   \label{EQ09}
  \end{align}
We prescribe periodic boundary conditions in the $x_{1}$, $x_{2}$ directions,
i.e., the horizontal component of the domain is~$\mathbb{T}^2$.
We also denote the initial conditions by 
  \begin{equation}
    (v,w,h,h_{t})(0,\cdot)
    = (v_0, w_0, 0, h_{1})
    .
   \llabel{EQ10}
  \end{equation}
In the reminder of the section, we derive an elliptic equation
for the pressure.
Applying the horizontal divergence operator $\div_{\HH}$ to the first equation in \eqref{EQ01}, we obtain
\begin{align}
\Delta_{\HH} p = -\div_{\HH} v_{t} + \Delta \div_{\HH} v - (v \cdot \nabla_{\HH}) \div_{\HH} v 
      -  \nabla_{\HH} v : \nabla^{T}_{\HH}  v - w \partial_{z} \div_{\HH} v  
      -   \nabla_{\HH} w \cdot \partial_{z}  v  
   \llabel{EQ80}
\end{align}
on the variable domain~$\Omega(t)$.
We next note that $\div_{\HH} v = -\partial_{z} w $ from the divergence condition in \eqref{EQ01}, so that we have
\begin{align}
\Delta_{\HH} p = \partial_{z}  w_{t} - \partial_{z} \Delta w +\partial_{z} ( (v \cdot \nabla_{\HH})  w )
      -  \nabla_{\HH} v : \nabla^{T}_{\HH}  v + w \partial^{2}_{z} w  
      -   2 \nabla_{\HH} w \cdot \partial_{z}  v  
.
\label{EQ81}      
\end{align}

\startnewsection{Harmonic change of variable}{sec03}
For simplicity, we assume that the interface is initially flat, i.e.,
  \begin{align}
   h(x_{1}, x_{2}, t=0) = 1
   \inin{\Omega}
   ,
   \label{EQ11}
  \end{align}
where $\Omega = \Omega(0)= \mathbb{T}^{2} \times [0,1]$.
We define a change of variable which fixes the domain as follows.
First, let $\phi$ be the harmonic extension of $h$, defined as
  \begin{align}
  \begin{split}
  &\Delta \phi =0
  \inin{\Omega}
  ,
  \\&
  \phi(x_{1}, x_{2}, 1,t) =  h(x_{1},x_{2}, t)
    \onon{\Gamma_1}
    ,
  \\&
  \phi(x_{1}, x_{2}, 0,t) =  0
    \onon{\Gamma_0}
    ,
  \end{split}
   \label{EQ12}
  \end{align}
and then
define the map $\eta(x_{1}, x_{2}, t)\colon \Omega \to \Omega(t)$  as
  \begin{align}
   \begin{split}
   \eta (x_{1}, x_{2}, x_{3}, t)= ( x_{1}, x_{2}, \phi(x_{1}, x_{2}, x_{3}, t) )
   .   
   \end{split}
   \llabel{EQ13}
   \end{align}
We next introduce the inverse $a$ of the Jacobian matrix $\nabla
\eta$, which reads
  \begin{align}
   a = 
   \begin{pmatrix}
    1 & 0 & 0 \\
    0& 1 & 0 \\
    \fractext{-\partial_{1} \phi}{\partial_{3} \phi} & \fractext{-\partial_{2} \phi}{\partial_{3} \phi} & \fractext{1}{\partial_{3} \phi} 
   \end{pmatrix}
   \label{EQ14}
  \end{align}
and by $b$ the cofactor matrix 
  \begin{align}
   b
   = J a
   = (\partial_{3} \phi)  a 
   =
   \begin{pmatrix}
    \partial_{3} \phi  & 0 & 0 \\
    0& \partial_{3} \phi & 0 \\
    -\partial_{1} \phi & -\partial_{2} \phi & 1
   \label{EQ16}
   \end{pmatrix}
     ,
   \end{align}
where 
  \begin{equation}
   J=\det \nabla\eta=\partial_{3} \phi
   \llabel{EQ15}
  \end{equation}
stands for the Jacobian of the matrix~$\nabla \eta$.
Note that $\phi|_{t=0} = x_{3}$, and thus $\nabla \eta |_{t=0} = I $.

\subsection{Fixing the domain}
We now introduce the new variables for the velocity $(\bar v, \bar w)$
and the pressure~$\bar p$ as
  \begin{align}
   \begin{split}
&
   \bar{v} = v  \circ \eta  \inin{\Omega},
   \\&
   \bar{w} = w  \circ \eta  \inin{\Omega},
   \\&
   \bar{p}=   p  \circ \eta   \inin{\Omega}
   ,
  \end{split}
   \llabel{EQ17}
  \end{align}
which are defined on a fixed domain~$\Omega=\Omega(0)$.
We next perform the harmonic change of variables to the forthcoming differential operators, which leads to
  \begin{align}
   \partial_{t} &\to \partial_{t} -  a_{33}\partial_{t} \phi \partial_{3}, \llabel{EQ82}
   \\
   \partial_{z} &\to a_{33} \partial_{3 }= \frac{1}{\partial_{3} \phi} \partial_{3},\llabel{EQ83}\\
   \Delta   \to \Delta_{a} & \equiv  \Delta_{x_{1}, x_{2}} +\sum_{i,j=1}^{3}   a_{ji} \partial_{j}( a_{3i}\partial_{3})  + \sum_{i=1}^{2} a_{3i} \partial_{i}\partial_{3},
   \llabel{EQ84}\\
   \Delta_{\HH} \to \Delta_{a,\HH} & \equiv  \Delta_{x_{1}, x_{2}} + \underbrace{ \sum_{j=1}^{3} \sum_{i=1}^{2}  a_{ji} \partial_{j}( a_{3i}\partial_{3})  + \sum_{i=1}^{2} a_{3i} \partial_{i}\partial_{3} }_{\tilde{\Delta}_{a}}, \llabel{EQ85}\\
   \nabla_{\HH} \to \nabla_{a,\HH} & \equiv  \nabla_{x_{1}, x_{2}} + (a_{31} \partial_{3}, a_{32} \partial_{3})
   .
   \llabel{EQ86}
\end{align}
Here and below, we use $x_3$ and $z$ to denote the third variable interchangeably.
Applying this change of variable to the primitive equations,  we get
  \begin{align}
   &
   \bar{v}_{t}
   -    \Delta_{x_{1}, x_{2}} \bar{v}
       -   a_{kl} \partial_{k}(a_{3l} \partial_{3} \bar{v})
         - \sum_{i=1,2} a_{3i} \partial_{3} \partial_{i} \bar{v}
     +\bar{v}_\gamma a_{j\gamma}\partial_j\bar{v}
     + \frac{1}{\partial_{3} \phi} (\bar{w}- \phi_{t}) \partial_{3} \bar{v} 
     + \nabla_{\HH} \bar{p} = 0 \inin{\Omega},
    \llabel{EQ18a}
   \\&
\frac{1}{\partial_{3} \phi} \partial_{3} \bar{w}=   \div_{a,\HH} \bar{v} 
   \inin{\Omega}
   ,
   \label{EQ19a}
  \end{align}
with the boundary  conditions
  \begin{align}
  \bar{v}=0    \onon{\Gamma_{1} \times [0,T]}  
  \llabel{EQ20a}
  \end{align}
and 
\begin{align}
 \begin{split}
  h_{t}(t,x_{1}, x_{2})= \bar{w}(t, x_{1},x_{2},1)  \onon{\mathbb{T}^{2} \times [0,T]}   
  .
 \end{split}
  \llabel{EQ21a}
\end{align}
After the change of variable, the plate equation still reads 
\begin{align}
 \begin{split}
  h_{tt}+\Delta_{\HH}^{2}h = \bar{p}  \onon{\Gamma_{1} \times [0,T]}   
  ,
 \end{split}
  \llabel{EQ2a}
\end{align}
as it is originally written in the Lagrangian coordinates.
The no-slip boundary condition on $\Gamma_{0}$ satisfied by the new variables are
  \begin{align}
  \bar{v}=0    \onon{\Gamma_{0} \times [0,T]}  ,
  \llabel{EQ23a}  \\
  \bar{w}=0    \onon{\Gamma_{0} \times [0,T]}  
  .
  \llabel{EQ24a}  
  \end{align}
Note that the condition for the pressure \eqref{EQ01}$_3$ becomes
  \begin{align}
  \begin{split}
   \partial_{3} \bar{p} = 0,
  \end{split}
   \llabel{EQ29a}
  \end{align}
which implies that  $ \bar{p}$ depends only on $x_{1}$ and~$x_{2}$.

Note that \eqref{EQ81} in the new variables takes the form
  \begin{align}
    \begin{split}
   \Delta_{x_{1}, x_{2}}  \bar{p}
   &= \frac{1}{\partial_{3}\phi}
   \biggl(
   \partial_{3} ( \bar{w}_{t}- a_{33} \phi_{t} \partial_{3} \bar{w} ) - \partial_{3} \Delta_{a} \bar{w} 
   + \partial_{3}( (\bar{v} \cdot \nabla_{a,\HH})  \bar{w} )
   \\&\indeq\indeq\indeq\indeq\indeq\indeq
   -  \partial_{3}\phi \nabla_{a,\HH} \bar{v} : \nabla^{T}_{a, \HH}  \bar{v} + \bar{w} \partial_{3}\left(\frac{1}{\partial_{3} \phi} \partial_{3}\bar{w}\right)   
   -  2 \nabla_{a,\HH} \bar{w} \cdot \partial_{3}  \bar{v}
   \biggr)
   .
  \end{split}
  \label{EQ120}
  \end{align}

\subsection{The setting after the variable change}
Since we shall not return to the original Eulerian variables, we
now omit bars over the variables. Here we state precisely the problem
in the new variables and formulate the main result.

The momentum and divergence equations read
  \begin{align}
   &
   {v}_{t}
     - \diva ( \nablaa {v})
     + v \cdot \nabla_{a,\HH} v
     + \frac{1}{\partial_{3} \phi} ({w}- \phi_{t}) \partial_{3} {v} 
     + \nabla_{\HH} {p} = 0 \inin{\Omega},
    \label{EQ18}
   \\&
\frac{1}{\partial_{3} \phi} \partial_{3} {w}=   \div_{a,\HH} {v} 
   \inin{\Omega}
   ,
   \label{EQ19}
  \end{align}
where $p$ depends only on $x_1$ and~$x_2$.
The boundary condition for the velocity on $\Gamma_{1}$ is
given by
  \begin{align}
  {v}=0    \onon{\Gamma_{1} \times [0,T]}  
  \label{EQ20}  
  \end{align}
and 
\begin{align}
 \begin{split}
  h_{t}(t,x_{1}, x_{2})= {w}(t, x_{1},x_{2},1)  \onon{\mathbb{T}^{2} \times [0,T]}   
  .
 \end{split}
  \llabel{EQ21}  
\end{align}
On the other hand, the plate equation reads 
\begin{align}
 \begin{split}
  h_{tt}+\Delta_{\HH}^{2}h = {p}  \onon{\Gamma_{1} \times [0,T]}   
  .
 \end{split}
  \llabel{EQ22}  
\end{align}
The no-slip boundary condition on $\Gamma_{0}$ satisfied by the new variables becomes
  \begin{align}
  {v}=0    \onon{\Gamma_{0} \times [0,T]}  ,
  \label{EQ23}  \\
  {w}=0    \onon{\Gamma_{0} \times [0,T]}  
  .
  \llabel{EQ24}  
  \end{align}
Above, we used the following definitions.
The variable Laplacian is defined as
  \begin{align}
  \begin{split}
   \diva ( \nablaa {v})
     &\equiv
     \Delta_{x_{1}, x_{2}} {v}
       +   a_{kl} \partial_{k}(a_{3l} \partial_{3} {v})
         + \sum_{i=1,2} a_{3i} \partial_{3} \partial_{i} {v}
   ,
  \end{split}
   \llabel{EQ25}
  \end{align}
while the variable horizontal gradient reads
  \begin{align}
  \begin{split}
   [\nablaah{v}]_i
   \equiv  \nablah {v}_i 
       + \begin{pmatrix} a_{31} \\ a_{32}
     \end{pmatrix}  
       \partial_{3} {v}_i
         \comma i=1,2
   ,
  \end{split}
   \llabel{EQ26}
  \end{align}
which is consistent with
  \begin{equation}
   [v \cdot \nabla_{a,\HH} v]_i
   =
   v_\alpha\partial_\alpha v_i+
   v_\alpha a_{3\alpha}\partial_3 v_i
        \comma i=1,2
        .
   \llabel{EQ30}
  \end{equation}
The variable horizontal divergence reads
  \begin{align}
  \begin{split}
  \div_{a,\HH} {v}\equiv 
      \divh {v} 
      + a_{31}\partial_{3}{v}_{1}
        +a_{32}\partial_{3}{v}_{2}
  .
  \end{split}
   \llabel{EQ28}
  \end{align}
In $x_1,x_2$ directions, we assume 1-periodic boundary conditions on $v$, $w$,
$p$, and~$h$.

We also restate \eqref{EQ120} without bars, i.e.,
  \begin{align}
    \begin{split}
   \Delta_{x_{1}, x_{2}}  {p}
   &= \frac{1}{\partial_{3}\phi}
   \biggl(
   \partial_{3} ( {w}_{t}- a_{33} \phi_{t} \partial_{3} {w} ) - \partial_{3} \Delta_{a} {w} 
   + \partial_{3}( ({v} \cdot \nabla_{a,\HH})  {w} )
   \\&\indeq\indeq\indeq\indeq\indeq\indeq
   -  \partial_{3}\phi \nabla_{a,\HH} {v} : \nabla^{T}_{a, \HH}  {v} + {w} \partial_{3}\left(\frac{1}{\partial_{3} \phi} \partial_{3}{w}\right)   
   -  2 \nabla_{a,\HH} {w} \cdot \partial_{3}  {v}
   \biggr)
   .
  \end{split}
   \label{EQ87}
  \end{align}

Note that we shall use $x_3$ and $z$ interchangably; similarly, we
shall use $v_3$ for $w$ without mention.

The following is the main result of this paper.

\cole
\begin{Theorem}
\label{T01}
Given initial data $(v_{0}, w_{0}) \in H^{2}(\Omega)$ and $h_{0}=1$
and $h_{1} \in   H^{4}(\mathbb{T}^{2})$,  assume that
there exists
a smooth solution $({v},{w},{p}, h,h_{t})$ to the system
\eqref{EQ01}--\eqref{EQ09}
on the time interval $[0,\TT]$ for some~$\TT$.
Assume that
$\|v_0\|^2_{H^2(\Omega)}
  +\|w_0\|^2_{H^2(\Omega)}
  +\|h\|^2_{H^4(\mathbb{T}^2)}\leq M_0$,
for some~$M_0$.
  Then the smooth solution to the system
\eqref{EQ01}--\eqref{EQ09}
on the time interval $[0,\TT]$ satisfies 
\begin{align}
\begin{split}
    &\sup_{0\leq t\leq \TT} \left(\|\partial''v\|^2_{L^2}
    +\|\partial'v\|^2_{L^2}+\|v\|^2_{L^2}+\|v_t\|^2_{L^2}
    +\|h_{tt}\|^2_{L^2}+\|h_t\|_{H^2}^2+\|h\|^2_{H^4}\right)
    \\&\indeq
    +\int_0^{\TT} \left(\|\partial''v\|^2_{H^1}+\|\partial'\partial_3v\|^2_{L^2}
    +\|\partial_3v\|^2_{L^2}+\|v_t\|^2_{H^1}\right)\, dt
    \leq K
    ,
\end{split}
   \llabel{EQ1111}
\end{align}
where $K$ is an explicit polynomial of~$M_0$.
In other words, the above energy inequality yields the Sobolev regularity of the solution on 
$[0,\TT]$:
\begin{align}
\begin{split}
&v\in L^2((0,{\TT});H^3(\Omega)),
\ v_t\in L^\infty((0,{\TT});L^2(\Omega))\cap L^2((0,{\TT});H^1(\Omega)),\\
&h\in L^\infty((0,{\TT});H^4(\mathbb{T}^2)),
\ h_t\in L^\infty((0,{\TT});H^2(\mathbb{T}^2)),
\ h_{tt}\in L^\infty((0,{\TT});L^2(\mathbb{T}^2)).
\end{split}
   \llabel{EQ123}
\end{align}
\end{Theorem}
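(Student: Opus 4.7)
The proof is a synthesis: Lemmas~\ref{L06}, \ref{L05}, \ref{L07}, and~\ref{L08} already encapsulate the four families of estimates (tangential, elliptic, pressure, and time-derivative), so the task here is to combine them into a single closed inequality for the energy functional that appears on the left-hand side of the conclusion. My plan is to set
\begin{align*}
E(t) &= \|\partial''v\|_{L^2}^2+\|\partial'v\|_{L^2}^2+\|v\|_{L^2}^2+\|v_t\|_{L^2}^2
        +\|h_{tt}\|_{L^2}^2+\|h_t\|_{H^2}^2+\|h\|_{H^4}^2,\\
D(t) &= \|\partial''v\|_{H^1}^2+\|\partial'\partial_3 v\|_{L^2}^2+\|\partial_3 v\|_{L^2}^2+\|v_t\|_{H^1}^2,
\end{align*}
and to extract from the four lemmas a differential inequality of the schematic form
$
\frac{d}{dt} E + D \le P(E) + \epsilon(T) \sup_{[0,t]} E + \text{(lower order)},
$
where $P$ is polynomial and $\epsilon(T)\to 0$ as $T\to 0$. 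The proof is then reduced to integrating in time and absorbing the $\epsilon(T)\sup E$ contribution into the left-hand side.

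As a preparatory step, I would record bounds on the ALE coefficients: since $\phi$ is the harmonic extension of $h$ with $h(0)=1$, elliptic regularity on $\Omega$ gives $\|\phi-x_3\|_{H^s(\Omega)}\lesssim \|h-1\|_{H^{s-1/2}(\mathbb{T}^2)}$ for the relevant $s$, and since $h-1=\int_0^t h_t\,ds$ the quantities $\|a-I\|_{L^\infty_{t,x}}$, $\|J-1\|_{L^\infty_{t,x}}$, $\|\phi_t\|_{L^\infty_{t,x}}$ remain of order $T^{1/2}M_0^{1/2}$ on $[0,\TT]$. This smallness is crucial because the nonlinear terms in \eqref{EQ18}--\eqref{EQ19} that involve $\partial_3 w$ cannot be eliminated by incompressibility alone, and the pressure estimate from \eqref{EQ87} produces pointwise-in-time terms that must be absorbed by $\epsilon(T)$ rather than dissipation.

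The order of assembly is then: insert the pressure bound from Lemma~\ref{L07} (which controls $\|p\|_{H^2(\mathbb{T}^2)}$ via the plate equation $h_{tt}+\Delta_{\HH}^2 h = p$ and via the elliptic equation \eqref{EQ87} for the pressure) into the tangential estimate of Lemma~\ref{L06}, controlling the boundary contribution noted in the introduction as the first term in~\eqref{EQ89} by the structural dissipation coming from integration by parts against $h_{tt}$ and $\Delta_{\HH}^2 h$; then add the time-derivative estimate from Lemma~\ref{L08}, where the absence of any pointwise control on $p_t$ forces us to keep $p$ on the right side and to absorb the resulting terms using the smallness of the ALE coefficients; finally, use the Agmon--Douglis--Nirenberg-type elliptic estimate of Lemma~\ref{L05} to upgrade from tangential regularity to full $H^3$ (in the horizontal scale) regularity of $v$, recovering the $\int \|\partial_3 v\|$ and $\int \|\partial'\partial_3 v\|$ pieces of $D(t)$.

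The main obstacle is the last absorption. Because $p$ enters at top order and no regularity of $p_t$ is available, the time-derivative identity for $v_t$ produces a pointwise-in-time contribution involving $\|p\|_{H^2(\mathbb{T}^2)}$ multiplied by a factor that is \emph{not} small unless the ALE coefficients are; one therefore has to track the coefficient of $\sup_{[0,t]} E$ carefully and choose $\TT$ small enough depending only on $M_0$ so that this coefficient is, say, $\le 1/2$. Once this is arranged, integrating in $t$, taking the supremum over $[0,\TT]$, and applying a nonstandard Gronwall argument (of the form $X(t)\le C+C\int_0^t Q(X)\,ds + \tfrac{1}{2}\sup_{[0,t]}X$) yields $\sup_{[0,\TT]}E(t)+\int_0^{\TT} D(t)\,dt\le K$, with $K$ an explicit polynomial in $M_0$. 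The regularity statement at the end of the theorem is then a direct readout of the controlled norms.
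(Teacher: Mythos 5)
Your synthesis follows the same overall strategy as the paper (combine Lemmas~\ref{L06}, \ref{L05}, \ref{L07}, \ref{L08}, exploit the short-time smallness of the ALE coefficients, absorb, and finish with a nonstandard Gronwall argument), but the order of assembly you describe does not close as stated, and the missing step is the one the paper's synthesis is built around. You propose to insert the pressure bound of Lemma~\ref{L07} into the tangential estimate first and to invoke Lemma~\ref{L05} only at the end ``to upgrade to full $H^3$''. Taken literally this fails: the bound \eqref{EQ97} controls $\Vert p\Vert_{H^2}$ only through $\Vert v\Vert_{H^3}$, while the dangerous right-hand side terms of \eqref{EQ64} are $\epsilon\Vert v\Vert_{H^3}^2$ and $\Vert p\Vert_{H^2}\mathcal{P}$, and the tangential dissipation controls only $\Vert\partial''v\Vert_{H^1}$, not the normal derivatives $\partial_{33}v$, $\partial_{33}\partial'v$, $\partial_{333}v$ hidden in $\Vert v\Vert_{H^3}$. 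Worse, the estimates of Lemma~\ref{L05} for $\partial_{33}v$ and $\partial_{33}\partial'v$ themselves contain $\Vert p\Vert_{H^2}\mathcal{P}$, so pressure and $H^3$ velocity are a priori circularly coupled. The paper breaks this loop by noting that the estimate \eqref{EQ79} for $\Vert\partial_{333}v\Vert_{L^2}$ is independent of the pressure (since $\partial_3p=0$ kills the corresponding term), substituting \eqref{EQ104}--\eqref{EQ105} into \eqref{EQ97} to obtain the closed pressure bound \eqref{EQ106}, and only then feeding \eqref{EQ104}--\eqref{EQ106} into the tangential and time-derivative estimates, where the remaining $(\PPs+\epsilon)$ prefactors permit absorption after choosing $\epsilon$ and the time small. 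Your plan never identifies this de-circularization, and ``tracking the coefficient of $\sup E$'' is not a substitute for it.

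A smaller omission: your functional $E$ contains $\Vert v\Vert_{L^2}^2$ and $\Vert\partial'v\Vert_{L^2}^2$, but none of the four lemmas produces these; the paper closes the system by adding the plain $L^2$ energy identity \eqref{EQ110} and by eliminating $\Vert\partial'v\Vert_{L^2}$ from the generic polynomial via the interpolation \eqref{EQ113}, so that the Gronwall inequality runs only in quantities present on the left-hand side. Once these two points are supplied, the remainder of your outline (smallness of $a-I$, $J-1$, $\phi_t$ on a short time interval, absorption of $\epsilon\Vert v_t\Vert_{H^1}^2$, integration in time and the Gronwall step) matches the paper's proof.
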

\colb

The remainder of the paper is devoted to proving this theorem.
Note that the theorem provides an a priori estimate; we will address the construction of solutions in a future work.

\startnewsection{Preliminary lemmas}{sec6}
The first statement asserts the regularity of the cofactor matrix~$b$ based on regularity of the height function.

\cole
\begin{Lemma}
\label{L01}
Let $(h, {v}) \in L^2([0,T];H^4(\mathbb{T}^2)\times H^3(\Omega))$.  Then the ALE map $\phi$ and the coefficient matrices $a$ and $b$ satisfy the following inequalities:\\
(i) $\|\phi(t) \|_{H^{r}} \lesssim
    \| h(t) \|_{H^{r-1/2}(\mathbb{T}^{2})}$,
    for $1 \leq r \leq 2$,\\
(ii) $\|b(t) \Vert_{H^{r}} \lesssim \| h(t) \|_{H^{r+1/2}(\mathbb{T}^{2})}$, for $r \geq 0$, and \\
(iii) $ \| \partial_{3} \phi(t)\|_{L^{\infty}} \lesssim
    \| h(t) \Vert_{H^{s+1/2}(\mathbb{T}^{2})}$,
   for $s > 3/2$,\\
for $t\in[0,T]$.
\end{Lemma}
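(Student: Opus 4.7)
The plan is to derive all three bounds from a single observation: on the flat reference slab $\Omega = \mathbb{T}^2\times[0,1]$, the harmonic extension $\phi$ gains exactly one-half derivative over its Dirichlet trace on $\Gamma_1$. For part (i), I would use the explicit Fourier representation
\[
\phi(x',x_3) = \sum_{k\neq 0} \hat h(k)\,\frac{\sinh(2\pi|k|x_3)}{\sinh(2\pi|k|)}\,e^{2\pi i k\cdot x'} + \hat h(0)\,x_3,
\]
which solves \eqref{EQ12} since every non-zero mode $\phi_k$ satisfies $\partial_3^2\phi_k = (2\pi|k|)^2\phi_k$ with boundary values $\phi_k(0)=0$ and $\phi_k(1)=\hat h(k)$. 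A direct Parseval computation in $x_3$, mode by mode, yields $\|\phi\|_{H^r(\Omega)}^2 \lesssim \sum_k (1+|k|)^{2r-1}|\hat h(k)|^2 = \|h\|_{H^{r-1/2}(\mathbb{T}^2)}^2$ for every $r\geq 1/2$, which covers the stated range $1\leq r\leq 2$. One could equivalently invoke the lifting theorem combined with elliptic regularity for the Dirichlet Laplacian on a smooth domain.

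For part (ii), I would observe from \eqref{EQ16} that each entry of $b$ is either the constant $1$ or a component of $\nabla\phi$. Since $\Omega$ has finite measure, the constant entry contributes only an additive constant which is absorbed into the implicit one, so $\|b\|_{H^r(\Omega)} \lesssim 1 + \|\phi\|_{H^{r+1}(\Omega)}$. Applying the extension estimate from the previous paragraph at the exponent $r+1$ then gives $\|\phi\|_{H^{r+1}(\Omega)} \lesssim \|h\|_{H^{r+1/2}(\mathbb{T}^2)}$, which proves (ii) for every $r\geq 0$. Part (iii) follows from (ii) combined with the Sobolev embedding $H^s(\Omega) \hookrightarrow L^\infty(\Omega)$ valid for $s>3/2$ on the three-dimensional slab: since $\partial_3\phi$ is an entry of $b$, one has $\|\partial_3\phi\|_{L^\infty(\Omega)} \lesssim \|b\|_{H^s(\Omega)} \lesssim \|h\|_{H^{s+1/2}(\mathbb{T}^2)}$.

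No serious obstacle is anticipated: all three assertions reduce to Fourier-multiplier computations and Sobolev embedding on the flat slab. The only minor point worth noting is that the statement of (i) is limited to $1\leq r\leq 2$, whereas (ii) effectively requires the extension estimate for every $r\geq 1$; the Fourier representation handles the full range transparently, so no separate argument is needed.
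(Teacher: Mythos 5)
Your proposal is correct and takes essentially the same route as the paper: part (i) is the half-derivative gain of the harmonic extension (the paper cites elliptic regularity, you prove it directly by the mode-by-mode Fourier computation on the slab), part (ii) reads off the entries of $b$ as $1$ or components of $\nabla\phi$, and part (iii) is the Sobolev embedding $H^s(\Omega)\hookrightarrow L^\infty(\Omega)$ for $s>3/2$. Your explicit Fourier representation also quietly handles the range issue you flag (the extension estimate is needed at exponents $r+1>2$ for (ii)--(iii), beyond the range stated in (i)), and, just as in the paper's own terse proof, absorbing the constant entries of $b$ into $\|h\|_{H^{r+1/2}(\mathbb{T}^2)}$ tacitly uses that $h$ stays near $1$ so that this norm is bounded below.
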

\colb

Above, $T>0$ is the time of existence.

\begin{proof}[Proof of Lemma~\ref{L01}]
(i) Since $\phi$ is the harmonic extension of $h $ from the boundary to the interior of $\Omega$,
the inequality follows by using elliptic regularity. \\
(ii) The estimates follow from the definition of $b$ and part~(i). \\
(iii) By Sobolev's inequality,  we have
$\Vert \partial_{3} \phi \Vert_{L^{\infty}} \lesssim \Vert \phi \Vert_{H^{s+1}}$,
for  $s>3/2$, and the bound then follows from~(i).
\end{proof}

\cole
\begin{Lemma}
\label{L02}
Let $\epsilon\in(0,\frac{1}{2}]$. Assume that $\phi$ is smooth and
\begin{align}
    \|h\|_{H^4(\Gamma_1)},\|h_t\|_{H^2(\Gamma_1)}\leq K,
    \indeq\indeq 
    t\in[0,T],
    \label{EQ32}
\end{align}
where $K$
is as in the statement of main theorem and $0<T\leq{\TT}$. Then we have
\begin{align}
    \|a-I\|_{H^r},\|b-I\|_{H^r},\|J-1\|_{H^r}\leq\epsilon,
    \indeq 
    t\in[0,\min\{T_0,T\}]
    \label{EQ33}
\end{align}
and \begin{align}
    \|J-1\|_{L^\infty}\leq \epsilon,
    \indeq\indeq
    t\in[0,\min\{T_0,T\}]
    ,
    \label{EQ34}
\end{align}
where $T_0$ satisfies
\begin{align}
  \begin{split}
    0<T_0\leq\frac{\epsilon}{CK},
  \end{split}
   \llabel{EQ35}
\end{align}
with $C$ depending on dimension three only, and the exponent 
$r$ can be chosen arbitrarily from the interval $[1.5,3.5)$.
\end{Lemma}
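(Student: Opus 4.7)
The plan is to exploit the initial condition $\phi|_{t=0}=x_3$, which gives $a(0)=I$, $b(0)=I$, and $J(0)=1$, and then to control the deviations by integrating time derivatives. Specifically, write
\begin{equation*}
a(t)-I=\int_0^t \partial_s a(s)\,ds,
\quad
b(t)-I=\int_0^t \partial_s b(s)\,ds,
\quad
J(t)-1=\int_0^t \partial_s J(s)\,ds,
\end{equation*}
so that, in any norm,
\begin{equation*}
\|a(t)-I\|\leq t\,\sup_{s\in[0,t]}\|\partial_s a(s)\|,
\end{equation*}
and analogously for $b$ and $J$. If we can show that the time derivatives are uniformly bounded by a constant depending on $K$ only, then choosing $T_0\leq \epsilon/(CK)$ will yield~\eqref{EQ33}.

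The time derivatives involve $\phi_t$. Differentiating the Laplace problem~\eqref{EQ12} in $t$ shows that $\phi_t$ is the harmonic extension of $h_t|_{\Gamma_1}$, so by the same elliptic regularity argument used in Lemma~\ref{L01}(i) we obtain $\|\phi_t(t)\|_{H^{r+1}}\lesssim\|h_t(t)\|_{H^{r+1/2}(\mathbb{T}^2)}\lesssim K$ for $r\in[1.5,3.5)$, since $r+1/2< 4$ and we are given $\|h_t\|_{H^2}\leq K$ (after choosing $r$ appropriately small, or combining with interpolation; note $h_t\in H^2$ suffices for $r\leq 3/2$, and for higher $r$ up to $3.5$ we use the $H^4$ bound on $h$ via $h(t)=h(0)+\int_0^t h_s\,ds$ together with time-integration of the velocity matching $w|_{\Gamma_1}=h_t$). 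Since $b$ is polynomial in the first derivatives of $\phi$, $\partial_t b$ is linear in the first derivatives of $\phi_t$, so Lemma~\ref{L01}(ii) applied with $h_t$ in place of $h$ gives $\|\partial_t b(t)\|_{H^r}\lesssim K$. The bound on $\partial_t J$ is a special case.

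The delicate step, which I expect to be the main obstacle, is the estimate for $\partial_t a$, because the entries of $a$ in~\eqref{EQ14} involve the reciprocal $1/\partial_3\phi$. To handle this I would first establish the $L^\infty$ bound on $J-1$ on a short interval: from $J(0)=1$ and
\begin{equation*}
\|J(t)-1\|_{L^\infty}\leq t\,\sup_{s\in[0,t]}\|\partial_s J(s)\|_{L^\infty}\lesssim t\,\sup_{s\in[0,t]}\|\partial_s\phi(s)\|_{H^{5/2+\delta}}\lesssim tK,
\end{equation*}
via Sobolev embedding $H^r\hookrightarrow L^\infty$ for $r>3/2$, one obtains~\eqref{EQ34} by choosing $T_0\leq\epsilon/(CK)$. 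In particular, on $[0,\min\{T_0,T\}]$ one has $\partial_3\phi\geq 1/2$ pointwise, so $1/\partial_3\phi$ is a smooth function of $\partial_3\phi$ on the relevant range and the standard composition/Moser estimate yields $\|1/\partial_3\phi\|_{H^r}\lesssim 1+\|\partial_3\phi\|_{H^r}\lesssim 1+\|h\|_{H^{r+1/2}}\leq C(K)$. Differentiating in $t$,
\begin{equation*}
\partial_t(1/\partial_3\phi)=-(1/\partial_3\phi)^2\,\partial_3\phi_t,
\end{equation*}
and the $H^r$-algebra property (valid for $r>3/2$) together with the bound on $\phi_t$ gives $\|\partial_t a\|_{H^r}\lesssim C(K)$. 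Integrating in time and taking $T_0$ small enough completes the argument.

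Finally, for the upper range of admissible $r$: the algebra and Moser estimates used above require $r>3/2$, and the bound $\|b\|_{H^r}\lesssim\|h\|_{H^{r+1/2}}$ combined with the hypothesis $h\in H^4$ forces $r+1/2<4$, i.e.\ $r<7/2=3.5$, explaining the stated range $r\in[1.5,3.5)$.
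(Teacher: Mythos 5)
There is a genuine gap in your main line of argument. You reduce \eqref{EQ33} to uniform-in-time bounds $\|\partial_t a\|_{H^r},\|\partial_t b\|_{H^r}\lesssim C(K)$ and then integrate in time, but for $b$ (and hence $a$, $J$) this requires $\|\nabla\phi_t\|_{H^{r}}\lesssim\|h_t\|_{H^{r+1/2}(\mathbb{T}^2)}$, i.e.\ control of $h_t$ in $H^{r+1/2}$ with $r+1/2$ ranging up to $4$. The hypothesis \eqref{EQ32} only gives $\|h_t\|_{H^2}\leq K$, so your time-derivative bounds are available only for $r\leq 3/2$, not on the whole range $[1.5,3.5)$. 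The same defect appears in your $L^\infty$ step: $\|\partial_t J\|_{L^\infty}\lesssim\|\phi_t\|_{H^{5/2+\delta}}\lesssim\|h_t\|_{H^{2+\delta}}$ is \emph{not} bounded by $K$ under \eqref{EQ32}, so \eqref{EQ34} does not follow as written, and with it the pointwise lower bound on $\partial_3\phi$ that your Moser/composition estimate for $1/\partial_3\phi$ relies on. Your parenthetical patch --- recovering higher regularity of $h_t$ from $h\in H^4$ and the velocity matching $w|_{\Gamma_1}=h_t$ --- does not work: nothing in the hypotheses gives $h_t\in H^{r+1/2}(\mathbb{T}^2)$ for $r+1/2$ close to $4$, and the lemma is meant to hold under \eqref{EQ32} alone.

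The missing idea is to separate smallness from regularity and interpolate, which is how the paper argues: time integration of $h_t$ (bounded in $H^2$) gives $\|h-1\|_{H^2}\lesssim tK$, hence by elliptic regularity $\|b-I\|_{H^{1.5}}\lesssim tK$ is \emph{small}; the $H^4$ bound on $h$ gives $\|b-I\|_{H^{3.5}}\lesssim K+1$, which is merely \emph{bounded}; interpolating between $H^{1.5}$ and $H^{3.5}$ yields $\|b-I\|_{H^r}\leq\epsilon$ for every $r<3.5$ once $T_0\lesssim\epsilon/K$ (this is also why the endpoint $r=3.5$ is excluded --- no smallness survives there). The bounds for $a-I$ and $J-1$ then follow from \eqref{EQ14} and \eqref{EQ16}, and \eqref{EQ34} follows from \eqref{EQ33} with some $r>3/2$ by Sobolev embedding, rather than from a bound on $\partial_t J$ in $L^\infty$. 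So your strategy is salvageable --- your low-norm time-integration step is essentially the paper's first step --- but as written the claimed bounds on $\partial_t a,\partial_t b,\partial_t J$ for $r>3/2$ are false under the stated hypotheses, and the interpolation must be performed on $b-I$ (small low norm versus bounded high norm), not used to manufacture $K$-bounds on $\|\phi_t\|_{H^{r+1}}$.
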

\colb

From here on, we denote $T_0$ simply by $T$ and assume that \eqref{EQ33} and \eqref{EQ34} hold with a sufficiently small constant $\epsilon>0$ to be determined below. Note that, by \eqref{EQ33}, we also have
\begin{align}
    \|a-I\|_{L^\infty},\|b-I\|_{L^\infty}\lesssim\epsilon,
    \indeq\indeq
    t\in[0,T],
    \llabel{EQ36}
\end{align}
while \eqref{EQ34} gives
\begin{align}
    \frac{1}{2}\leq J\leq \frac{3}{2},
    \indeq\indeq
    t\in[0,T]
    ;
    \label{EQ37}
\end{align}
in particular, $J=\partial_3\phi$ is positive and stays away from~$0$.

\begin{proof}[Proof of Lemma~\ref{L02}]
By \eqref{EQ11} and \eqref{EQ32}, we have 
$h\in C\left([0,T],H^2(\Omega)\right)$.
By the Sobolev embedding theorem, there exists $T_0>0$ such that
\begin{align}
      \|h-1\|_{H^2}\lesssim\epsilon
    \indeq\indeq 
    t\in [0,\min\{T_0,T\}]
    ,
    \llabel{EQ38}
\end{align}
where $T_0$ satisfies
\begin{align}
  \begin{split}
    0<T_0\leq\frac{\epsilon}{CK}
    .
  \end{split}
   \llabel{EQ39}
\end{align}
Using the elliptic regularity,
\begin{align}
    \|b-I\|_{H^{1.5}(\Omega)}
    \lesssim
    \|h-1\|_{H^2(\Gamma_1)}
    \lesssim\epsilon
    ,
    \llabel{EQ40}
\end{align}
and, again by \eqref{EQ32},
\begin{align}
    \|b-I\|_{H^{3.5}(\Omega)}
    \lesssim
    \|h-1\|_{H^4(\Gamma_1)}
    \lesssim K+1
    .
    \llabel{EQ41}
\end{align}
The second inequality in \eqref{EQ33} then follows by interpolation.
The remaining part of \eqref{EQ33} follows from \eqref{EQ14} and~\eqref{EQ16}. 
Finally, the $L^\infty$
estimate in \eqref{EQ34} follows directly from Sobolev’s inequality.
\end{proof}
\colb

\cole
\begin{Lemma}
\label{L03}
Let $v(x_1,x_2,z)$ be a smooth function satisfying our boundary conditions \eqref{EQ20} and~\eqref{EQ23}. Then
\begin{align}
    \left\Vert \partial_z^{m\gamma}\partial'^\beta v\right\Vert _{L^2}
    \lesssim
    \Vert \partial_z^{m} v\Vert ^{\gamma}_{L^2}
    \Vert (\partial')^{\frac{\beta}{1-\gamma}} v\Vert ^{1-\gamma}_{L^2}
    ,
    \label{EQ42}
\end{align}
where
$m\in\mathbb{N}$ and
$\beta>0$ and $0<\gamma<1$ are such that
$m \gamma \in \mathbb{N}$.
\end{Lemma}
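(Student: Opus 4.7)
The plan is to prove \eqref{EQ42} by reducing it to a Hölder inequality in Fourier space. Since $v=0$ on $\Gamma_{0}\cup\Gamma_{1}$ by \eqref{EQ20} and \eqref{EQ23}, I would extend $v$ by odd reflection across $z=0$ and then $2$-periodically in~$z$, obtaining a function on $\mathbb{T}^{2}\times(\mathbb{R}/2\mathbb{Z})$ whose $L^{2}$ norm and integer vertical-derivative $L^{2}$ norms are comparable, up to a universal constant, to those of~$v$ on~$\Omega$. Because $m\in\mathbb{N}$ and $m\gamma\in\mathbb{N}$ by hypothesis, both $\partial_z^{m}v$ and $\partial_z^{m\gamma}v$ transfer cleanly under this extension. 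I would then write the full Fourier expansion
\[
v(x',z)=\sum_{(k,n)\in\mathbb{Z}^{2}\times\mathbb{Z}}\widehat v(k,n)\,e^{ik\cdot x'}e^{i\pi n z},
\]
and interpret the fractional horizontal derivative $(\partial')^{s}$ as the Fourier multiplier $|k|^{s}$ on $\mathbb{T}^{2}$, which is equivalent to the usual homogeneous $H^{s}$-seminorm.

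With this setup, Plancherel's identity gives
\[
\|\partial_z^{m}v\|_{L^{2}}^{2}\sim\sum|n|^{2m}|\widehat v|^{2},
\qquad
\|(\partial')^{\beta/(1-\gamma)}v\|_{L^{2}}^{2}\sim\sum|k|^{2\beta/(1-\gamma)}|\widehat v|^{2},
\]
and
\[
\|\partial_z^{m\gamma}(\partial')^{\beta}v\|_{L^{2}}^{2}\sim\sum|n|^{2m\gamma}|k|^{2\beta}|\widehat v|^{2}.
\]
The key step is the pointwise algebraic identity
\[
|n|^{2m\gamma}|k|^{2\beta}|\widehat v|^{2}
=\bigl(|n|^{2m}|\widehat v|^{2}\bigr)^{\gamma}\bigl(|k|^{2\beta/(1-\gamma)}|\widehat v|^{2}\bigr)^{1-\gamma},
\]
to which I would apply Hölder's inequality on $\ell^{1}(\mathbb{Z}^{2}\times\mathbb{Z})$ with conjugate exponents $1/\gamma$ and $1/(1-\gamma)$. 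This yields
\[
\|\partial_z^{m\gamma}(\partial')^{\beta}v\|_{L^{2}}^{2}
\lesssim
\|\partial_z^{m}v\|_{L^{2}}^{2\gamma}\,\|(\partial')^{\beta/(1-\gamma)}v\|_{L^{2}}^{2(1-\gamma)},
\]
and taking a square root produces~\eqref{EQ42}.

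I do not anticipate an essential obstacle. The two technical items to verify are that odd-periodic reflection preserves the relevant vertical Sobolev regularity (immediate from the Dirichlet boundary condition and the integrality of $m$ and $m\gamma$, which prevents spurious jumps at $z=0,1$), and that the Fourier-multiplier definition of $(\partial')^{s}$ agrees with the standard $H^{s}$-seminorm on the torus; both are routine. An equivalent alternative is to read \eqref{EQ42} as a real-interpolation identity between the two anisotropic $L^{2}$-based Sobolev spaces on the right-hand side, but the direct Fourier/Hölder argument above is shorter and self-contained.
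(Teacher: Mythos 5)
Your argument is essentially the paper's proof: the paper expands $v$ directly in the sine series \eqref{EQ43} (switching to cosines for odd vertical-derivative order), applies Parseval to each of the three norms, and closes with exactly the same pointwise factorization and H\"older's inequality with exponents $1/\gamma$ and $1/(1-\gamma)$; your odd periodic reflection is just the standard reformulation of that sine expansion, so the two proofs coincide in substance.
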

\colb

Above and in the sequel, for smooth $v$
such that
    \begin{align}
      \begin{split}
    v(x_1,x_2,x_3)
    =\sum_{k\in \mathbb{Z}^2}\sum_{n=1}^\infty
    \hat{v}_{k,n}
    e^{ik\cdot x}
    \sin(n\pi x_3)
    ,
  \end{split}
   \label{EQ43}
    \end{align}
where $x=(x_1,x_2)$ and $k=(k_1,k_2)\in \mathbb{Z}^2$,
and $\alpha>0$, $\beta>0$, we denote
  \begin{equation}
    \Vert (\partial')^\beta v\Vert_{L^2}^2
    =
    (2\pi)^2
    \sum_{k,n}|k|^{\beta}|\hat{v}_{k,n}|^2
        .
   \llabel{EQ121}
  \end{equation}

\begin{proof}[Proof of Lemma~\ref{L03}]
Let \eqref{EQ43} be the Fourier series expansion of~$v$,
and denote $\alpha=m\gamma\in\mathbb{N}$.
For simplicity, we only address the case when $\alpha$ is even, where
    \begin{align}
      \begin{split}
        \partial_z^\alpha\partial'^\beta v
        =
        c_0
        \sum_{k,n}(n\pi)^\alpha
        |k|^\beta
        \hat{v}_{k,n}
        e^{ikx}\sin(n\pi x_3)
        .
  \end{split}
   \label{EQ44}
    \end{align}
By Parseval's Theorem, we have
    \begin{align}
        \Vert \partial_z^\alpha\partial'^\beta v\Vert ^2_{L^2}
        =
	c_1
        \sum_{k,n}|k|^{2\beta}n^{2\alpha}
        |\hat{v}_{k,n}|^2
        \label{EQ45}
    \end{align}
and
    \begin{align}
        \Vert \partial_z^{\frac{\alpha}{\gamma}}v\Vert_{L^2}^2
        &
        =
	c_2
        \sum_{k,n}n^{\frac{2\alpha}{\gamma}}|\hat{v}_{k,n}|^2
        ,
        \label{EQ46}
    \end{align}
where $c_0$, $c_1$, and $c_2$ are explicit constants depending on $m$ and~$\gamma$.
Similarly, the analogous expression for the horizontal derivatives reads
    \begin{align}
        \Vert \partial'^{\frac{\beta}{1-\gamma}}v\Vert^2_{L^2}
        &
        =
	c_3
        \sum_{k,n}|k|^{\frac{2\beta}{1-\gamma}}|\hat{v}_{k,n}|^2
        ,
        \label{EQ47}
    \end{align}
where $c_3$ depends on $\beta$ and~$\gamma$.
By H\"older's inequality, together with
\eqref{EQ45}, \eqref{EQ46}, and \eqref{EQ47},
we obtain
    \begin{align}
    \begin{split}
        \Vert \partial_z^\alpha\partial'^\beta v\Vert^2_{L^2}
        &\lec
        \sum_{k,n}n^{2\alpha}|k|^{2\beta}
        |\hat{v}_{k,n}|^2
        =
        \sum_{k,n}n^{2\alpha}|\hat{v}_{k,n}|^{2\gamma}
        \cdot
        |k|^{2\beta}|\hat{v}_{k,n}|^{2(1-\gamma)}
        \\&\lesssim
        \sum_{k,n}(n^{\frac{2\alpha}{\gamma}}
        |\hat{v}_{k,n}|^2)^\gamma
        \sum_{k,n}(|k|^{\frac{2\beta}{1-\gamma}}
        |\hat{v}_{k,n}|^2)^{1-\gamma}
        \lesssim
        \Vert \partial_z^{m}v\Vert^{2\gamma}_{L^2}
        \Vert \partial'^{\frac{\beta}{1-\gamma}}v\Vert ^{2(1-\gamma)}_{L^2}
        .
    \end{split}
    \llabel{EQ48}
    \end{align}
If $\alpha$ is odd, then we only need to replace $\sin (n\pi x_3)$
with $\cos (n\pi x_3)$ in~\eqref{EQ44}, but otherwise the proof is the same.
\end{proof}
\colb

\cole
\begin{Lemma}
\label{L04}
Let $v(x_1,x_2,z)$ be a smooth function satisfying our boundary conditions \eqref{EQ20} and~\eqref{EQ23}. Then
\begin{align}
    \Vert v\Vert_{L^\infty}
    \lesssim
    \Vert v\Vert_{L^2}^\frac{1}{4}
    \Vert \partial_3v\Vert_{L^2}^\frac{1}{4}
    \Vert \partial''v\Vert_{L^2}^\frac{1}{4}
    \Vert \partial_3\partial''v\Vert_{L^2}^\frac{1}{4}
    +\lot
    \label{EQ151}
\end{align}
In the case where the left-hand side is also taken in an anisotropic
norm, we have
\begin{align}
    \left\Vert \Vert v \Vert_{L^\infty_z}\right\Vert_{L^4_\HH}
    \lesssim
    \Vert (\partial')^\frac{1}{2}v\Vert_{L^2}^\frac{1}{2}
    \Vert \partial_3(\partial')^\frac{1}{2}v\Vert_{L^2}^\frac{1}{2}
    +\lot
    \label{EQ152}
\end{align}
\end{Lemma}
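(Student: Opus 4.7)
The plan is to establish both bounds by an anisotropic Agmon-type argument: use a one-dimensional Sobolev embedding in the vertical variable (exploiting the Dirichlet boundary conditions \eqref{EQ20} and \eqref{EQ23}) to transfer $L^\infty_z$-control to a product of $L^2_z$ norms, and then close with a two-dimensional estimate in the horizontal variables.

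For \eqref{EQ151}, I would start from the pointwise identity $v^2(x_1,x_2,z)=2\int_0^z v\,\partial_3 v(x_1,x_2,z')\,dz'$, which, by Cauchy--Schwarz in $z$, gives the 1D Agmon bound $\|v(x_1,x_2,\cdot)\|_{L^\infty_z}^2\le 2\|v(x_1,x_2,\cdot)\|_{L^2_z}\|\partial_3 v(x_1,x_2,\cdot)\|_{L^2_z}$. Taking the supremum in $(x_1,x_2)$ and splitting by Cauchy--Schwarz yields $\|v\|_{L^\infty(\Omega)}^2\le 2\|v\|_{L^\infty_H L^2_z}\|\partial_3 v\|_{L^\infty_H L^2_z}$. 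Using Minkowski's integral inequality I would exchange the norms to get $\|v\|_{L^\infty_H L^2_z}\le\|v\|_{L^2_z L^\infty_H}$, and for each fixed $z$ apply the 2D Agmon inequality $\|v(\cdot,z)\|_{L^\infty(\mathbb{T}^2)}^2\lec \|v(\cdot,z)\|_{L^2_H}\|v(\cdot,z)\|_{H^2_H}$. Integration in $z$ together with Cauchy--Schwarz yields $\|v\|_{L^\infty_H L^2_z}^2\lec\|v\|_{L^2}\|\partial''v\|_{L^2}+\|v\|_{L^2}^2$, and analogously for $\partial_3 v$. Multiplying the two bounds and expanding produces the main term $\|v\|_{L^2}\|\partial_3 v\|_{L^2}\|\partial''v\|_{L^2}\|\partial_3\partial''v\|_{L^2}$; the cross products are of strictly lower Sobolev order and get absorbed into the $\lot$ after taking the fourth root.

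For \eqref{EQ152}, the same 1D step gives $\|v(x_1,x_2,\cdot)\|_{L^\infty_z}^2\le 2\|v(x_1,x_2,\cdot)\|_{L^2_z}\|\partial_3 v(x_1,x_2,\cdot)\|_{L^2_z}$. Squaring, integrating over $\mathbb{T}^2$, and applying Cauchy--Schwarz in $(x_1,x_2)$ yields $\left\|\,\|v\|_{L^\infty_z}\right\|_{L^4_H}^4\lec\|v\|_{L^4_H L^2_z}^2\|\partial_3 v\|_{L^4_H L^2_z}^2$. The Minkowski exchange $\|v\|_{L^4_H L^2_z}\le\|v\|_{L^2_z L^4_H}$, combined with the 2D Sobolev embedding $H^{1/2}(\mathbb{T}^2)\hookrightarrow L^4(\mathbb{T}^2)$, then yields $\|v\|_{L^4_H L^2_z}\lec \|(\partial')^{1/2}v\|_{L^2}+\|v\|_{L^2}$; an analogous bound for $\partial_3 v$ closes the estimate after extracting the square root. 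The principal technical input in both parts is the 2D Agmon inequality on $\mathbb{T}^2$, which I would establish by a Littlewood--Paley split $F=F_{\le N}+F_{>N}$ with $\|F_{\le N}\|_{L^\infty}\lec N\|F\|_{L^2}$ by Bernstein and $\|F_{>N}\|_{L^\infty}\lec N^{-1}\|F\|_{H^2}$ by weighted Cauchy--Schwarz on the Fourier side, then optimizing in $N$. Once this is in hand the rest of the argument is bookkeeping: the hard part is the careful identification of the main term after expanding the product of mixed bounds and the systematic use of the Minkowski exchanges between $L^\infty_H L^2_z$, $L^2_z L^\infty_H$, $L^4_H L^2_z$, and $L^2_z L^4_H$, with every non-leading contribution absorbed into $\lot$.
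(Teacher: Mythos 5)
Your proposal is correct and follows essentially the same route as the paper: a one-dimensional Agmon bound in $z$ via the fundamental theorem of calculus and the Dirichlet conditions, a Minkowski exchange between the mixed norms, and then the two-dimensional Agmon inequality (respectively the embedding $H^{1/2}(\mathbb{T}^2)\hookrightarrow L^4(\mathbb{T}^2)$ for \eqref{EQ152}), with H\"older in $z$ to assemble the product and the cross terms absorbed into the lower-order terms. The only difference is that you also sketch a Littlewood--Paley proof of the 2D Agmon inequality, which the paper simply invokes.
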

\colb

We note that the inequality~\eqref{EQ151}, when the lower-order terms are included, reads
\begin{align}
    \begin{split}
    \Vert v\Vert_{L^\infty}
    &\lesssim
    (\Vert v\Vert_{L^2}^\frac{1}{4}\Vert \partial''v\Vert_{L^2}^\frac{1}{4}+\Vert v\Vert_{L^2}^\frac{1}{2})
    (\Vert\partial_3v\Vert_{L^2}^\frac{1}{4}\Vert\partial_3\partial''v\Vert_{L^2}^\frac{1}{4}+\Vert\partial_3v\Vert_{L^2}^\frac{1}{2})
    .
   \label{EQ136}
    \end{split}
\end{align}
Similar adjustments can be made for \eqref{EQ152}; however, the proof shows that the lower-order terms are of lower order than the leading ones and can be safely neglected.
From here on we simply neglect these lower order terms.

\begin{proof}[Proof of Lemma~\ref{L04}]
Since $v=0$ on $\Gamma_0 \cup \Gamma_1$, we obtain
\begin{align}
    \begin{split}
        &v^2(x_1,x_2,z,t)=v^2(x_1,x_2,z,t)-v^2(x_1,x_2,0,t)
        \\&\indeq
        =2\int_0^zv(x_1,x_2,z',t)\partial_3v(x_1,x_2,z',t)\,dz'\lesssim
        \Vert v\Vert_{L^2_z}^\frac{1}{2}
        \Vert\partial_3v\Vert_{L^2_z}^\frac{1}{2}
        ,
   \llabel{EQ124}
    \end{split}
\end{align}
for $(x_1,x_2,z,t)\in \mathbb{T}^2\times[0,1]\times[0,T]$. Therefore, using Agmon's inequality, we have
\begin{align}
    \begin{split}
        &\Vert v\Vert_{L^\infty}
        =\left\Vert\Vert v\Vert_{L^\infty_z}\right\Vert_{L^\infty_\HH}
        \lesssim 
        \left\Vert\Vert v\Vert_{L^2_z}^\frac{1}{2}
        \Vert\partial_3v\Vert_{L^2_z}^\frac{1}{2}
        \right\Vert_{L^\infty_\HH}
        \lesssim
        \left\Vert\Vert v\Vert_{L^2_z}
        \right\Vert_{L^\infty_\HH}^\frac{1}{2}
        \left\Vert
        \Vert\partial_3v\Vert_{L^2_z}
        \right\Vert_{L^\infty_\HH}^\frac{1}{2}
        \\&\indeq
        \lesssim
        \left\Vert\Vert v\Vert_{L^\infty_\HH}
        \right\Vert_{L^2_z}^\frac{1}{2}
        \left\Vert
        \Vert\partial_3v\Vert_{L^\infty_\HH}
        \right\Vert_{L^2_z}^\frac{1}{2}
        \lesssim
        \left\Vert\Vert v\Vert_{L^2_\HH}^\frac{1}{2}
        \Vert \partial''v\Vert_{L^2_\HH}^\frac{1}{2}
        +\Vert v\Vert_{L^2_\HH}
        \right\Vert_{L^2_z}^\frac{1}{2}
        \left\Vert
        \Vert \partial_3v\Vert_{L^2_\HH}^\frac{1}{2}
        \Vert \partial_3\partial''v\Vert_{L^2_\HH}^\frac{1}{2}
        +\Vert\partial_3v\Vert_{L^2_\HH}
        \right\Vert_{L^2_z}^\frac{1}{2}
        .
   \llabel{EQ125}
    \end{split}
\end{align}
Applying Minkowski’s inequality together with Hölder’s inequality, we deduce
\begin{align}
    \Vert v\Vert_{L^\infty}
    \lesssim
    \Vert v\Vert_{L^2}^\frac{1}{4}
    \Vert \partial_3v\Vert_{L^2}^\frac{1}{4}
    \Vert \partial''v\Vert_{L^2}^\frac{1}{4}
    \Vert \partial_3\partial''v\Vert_{L^2}^\frac{1}{4}
    +\lot
    \llabel{EQ155}
\end{align}
In a similar manner, we obtain for $\left\Vert \Vert v \Vert_{L^\infty_z}\right\Vert_{L^4_\HH}$ that
\begin{align}
    \left\Vert \Vert v \Vert_{L^\infty_z}\right\Vert_{L^4_\HH}
    \lesssim
    \left(\Vert (\partial')^\frac{1}{2}v\Vert_{L^2}^\frac{1}{2}
    +
    \Vert v\Vert_{L^2}^\frac{1}{2}
    \right)
    \left(\Vert \partial_3(\partial')^\frac{1}{2}v\Vert_{L^2}^\frac{1}{2}
    +
    \Vert \partial_3v\Vert_{L^2}^\frac{1}{2}
    \right)
    \lesssim
    \Vert (\partial')^\frac{1}{2}v\Vert_{L^2}^\frac{1}{2}
    \Vert \partial_3(\partial')^\frac{1}{2}v\Vert_{L^2}^\frac{1}{2}
    +\lot
    ,
    \llabel{EQ156}
\end{align}
completing the proof.
\end{proof}

\startnewsection{Tangential estimates}{sec13}
Throughout this paper, repeated Latin indices are summed over from~1 to 3, while repeated Greek indices go from~1 to 2. We denote by $\partial'$ the horizontal derivative in the $x_1$ or $x_2$ direction. Similarly, $\partial''$ denotes the double horizontal derivative. Throughout, let 
$\bard'=(-\Delta_{\HH}+I)^\frac{1}{2}$, where $-\Delta_\HH$ denotes the nonnegative Laplacian, so its Fourier symbol is~$|\xi|^2$. On $\mathbb{T}^2$, $\bard'$ is the Fourier multiplier with symbol 
$(|\xi|^2+1)^\frac{1}{2}$.

Before commencing with the tangential estimates,
we introduce our polynomial notation. Throughout the remainder of the paper, $\mathcal{P}$ denotes a generic positive polynomial  of the variable
    \begin{align}
        \begin{split}
            \sum_{\alpha=1,2}
	    (\|\partial''v_\alpha\|_{L^2}^2
            +\|\partial'v_\alpha\|^2
            +\|v_\alpha\|^2
            +\|\partial_tv_\alpha\|^2
	    )
            +\|h\|_{H^4}^2
            +\|h_t\|_{H^2}^2
            +\|h_{tt}\|_{L^2}^2
            .
        \end{split}
        \llabel{EQ55}
    \end{align}
Also, we use $\PPs$ to denote a positive polynomial tending
to zero as $t\to0$ depending on the same variable as above. 
The notation $\mathcal{P}_0$ refers to the evaluation of this polynomial at $t=0$, i.e., when all time–dependent variables are taken at their initial values.

\cole
\begin{Lemma}
\label{L06}
Under the assumptions of Theorem~\ref{T01}
and with $\epsilon\in(0,1/2]$,
we have
the tangential estimate
\begin{align}
    \begin{split}
        &\frac{1}{2}\frac{d}{dt}\int J  \partial''v_{\alpha}\partial''v_{\alpha}
        +\int \nablah\partial''v_\alpha\nablah\partial''v_\alpha
        +\int\partial_3\partial''v_\alpha\partial_3\partial''v_\alpha
        \\&\indeq
        +\frac{d}{dt}\int (\partial'' \partial_th)^2
        +\frac{d}{dt}\int (\partial''\Delta_{\HH}h)^2
	\\&
        \lesssim 
        \mathcal{P}
        +\epsilon(\Vert v_t\Vert_{H^1}^2+\Vert v\Vert_{H^3}^2)
        +\Vert p\Vert_{H^2}\mathcal{P}
        +\Vert p\Vert_{H^2}\Vert v\Vert_{H^2}\PPs
        .
    \end{split}
    \label{EQ64}
\end{align}
\end{Lemma}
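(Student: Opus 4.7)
The plan is to derive the claimed inequality through two coupled energy-type tests. I would apply $\partial''$ to the momentum equation \eqref{EQ18} and pair it in $L^2(\Omega)$ with the ALE-weighted test function $J\partial''v_\alpha$, and simultaneously apply $\partial''$ to the plate equation \eqref{EQ07} and pair it with $\partial''h_t$ on $\Gamma_1$. The Jacobian weight $J$ in the fluid test, together with the Piola identity $\partial_j(Ja_{ji})=0$, ensures that integration by parts reproduces the two dissipative integrals $\int|\nablah\partial''v|^2+\int|\partial_3\partial''v|^2$ cleanly; without the $J$-weight, additional uncontrollable lower-order commutators would appear.

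The time-derivative term yields $\tfrac12\tfrac{d}{dt}\int J|\partial''v|^2$ plus the remainder $\tfrac12\int J_t|\partial''v|^2$, which, by Lemma~\ref{L02} and the short-time smallness of $J_t$, is bounded by $\PPs\mathcal{P}$. The variable Laplacian $\diva(\nablaa v)$ paired with $J\partial''v_\alpha$ produces the two dissipative integrals modulo commutators $[\partial'',a]$; by Lemma~\ref{L02}, each such commutator carries either a factor $\|a-I\|_{L^\infty}\lesssim\epsilon$ or $\PPs$, and is therefore absorbed either as $\epsilon\|v\|_{H^3}^2$ into the dissipation or as $\PPs\mathcal{P}$ into the polynomial.

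The pressure contribution $\int J\partial''v_\alpha\,\partial''\partial_\alpha p$ is the most delicate. After integrating by parts in $x_\alpha$, the divergence condition \eqref{EQ19} rewrites $\partial_\alpha(Jv_\alpha)$ in terms of $\partial_3w$ up to $a_{3\alpha}$-remainders; a further integration by parts in $x_3$ together with $v|_{\Gamma_0\cup\Gamma_1}=0$ isolates the surface integral $\int_{\Gamma_1}\partial''p\,\partial''h_t$, with interior remainders of the form $\|p\|_{H^2}\mathcal{P}+\|p\|_{H^2}\|v\|_{H^2}\PPs$ (the second arising precisely from commutators with $a-I$, whose $L^\infty$-norm lies in $\PPs$). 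The $\partial''$-differentiated plate equation, tested against $\partial''h_t$, yields $\tfrac12\tfrac{d}{dt}\bigl(\int(\partial''h_t)^2+\int(\partial''\Delta_\HH h)^2\bigr)=\int_{\Gamma_1}\partial''p\,\partial''h_t$, which cancels the fluid boundary contribution and promotes the plate energy onto the left-hand side of \eqref{EQ64}.

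The remaining convection terms $\partial''\bigl[(v\cdot\nablaah v)\bigr]$ and $\partial''\bigl[J^{-1}(w-\phi_t)\partial_3v\bigr]$ are bounded by distributing $\partial''$ through the Leibniz rule and placing low-order factors in $L^\infty$ or in the mixed norm $L^4_\HH L^\infty_z$ via Lemma~\ref{L04}, with the fractional vertical-versus-horizontal split supplied by Lemma~\ref{L03}. The resulting bound is $\mathcal{P}+\epsilon(\|v_t\|_{H^1}^2+\|v\|_{H^3}^2)$. The main obstacle, and the reason for the anisotropic machinery, is the $(w-\phi_t)\partial_3v$ contribution after two tangential derivatives: the vertical-derivative structure forbids eliminating $\partial_3v$ via the divergence constraint (itself variable-coefficient), and the only viable route is to exploit the short-time smallness $\|w-\phi_t\|_{L^\infty}\in\PPs$ together with the anisotropic estimate of Lemma~\ref{L04}, so that the single surviving high-vertical-derivative factor can be absorbed into the dissipation with an $\epsilon$-small coefficient.
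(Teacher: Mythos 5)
Your overall architecture is the paper's: a $J$-weighted tangential energy estimate, the pressure term converted through Piola's identity and the divergence condition \eqref{EQ19} into $\int\partial''\partial_3w\,\partial''p$, then integrated in $x_3$ (using $\partial_3p=0$ and the kinematic condition) to produce the surface integral that cancels against the $\partial''$-tested plate equation, with the remaining commutators handled by Lemmas~\ref{L01}--\ref{L04}; whether one multiplies by $J$ before or after applying $\partial''$ and tests with $\partial''v_\alpha$ or $J\partial''v_\alpha$ is immaterial.

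However, there is a genuine gap in your treatment of the two terms you single out. You invoke ``short-time smallness of $J_t$'' and, crucially, the claim $\Vert w-\phi_t\Vert_{L^\infty}\in\PPs$ as ``the only viable route'' for the top-order contribution $(w-\phi_t)\partial_3\partial''v_\alpha$. Neither quantity is small for small time: Lemma~\ref{L02} gives closeness to the identity only for $a$, $b$, $J$ (because $h(0)=1$), not for their time derivatives nor for $w-\phi_t$. At $t=0$ one has $w=w_0$ and $\phi_t$ equal to the harmonic extension of $h_1$, both of size $O(1)$ in general, and $J_t(0)=\partial_3\phi_t(0)\neq0$; so bounds of the form $\PPs\,\mathcal{P}$ or absorption ``with an $\epsilon$-small coefficient'' based on this smallness do not hold. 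The $J_t$ term is a minor repair (e.g.\ $\Vert J_t\Vert_{L^3}\Vert\partial''v\Vert_{L^2}\Vert\partial''v\Vert_{L^6}\lesssim\mathcal{P}+\epsilon\Vert v\Vert_{H^3}^2$, no smallness needed), but the advective term needs a replacement argument. The paper does not estimate $(w-\phi_t)\partial_3\partial''v_\alpha\,\partial''v_\alpha$ at all: it keeps this term in undifferentiated form ($I_8$ in \eqref{EQ54}), integrates by parts using Piola's identity, the divergence condition, and the vanishing of $w-\phi_t$ on $\Gamma_0\cup\Gamma_1$, so that it reduces exactly to $\tfrac12\int J_t|\partial''v_\alpha|^2$ and, combined with $I_7$, leaves only commutator-type terms \eqref{EQ27}; in those remainders the short-time smallness enters only through the coefficients $b_{31},b_{32}$ (via the representation of $\partial''w$ and $\partial''\partial_3w$ from the divergence condition), together with the anisotropic estimates of Lemmas~\ref{L03} and~\ref{L04}. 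You should either reproduce this transport cancellation or, at minimum, control $\Vert w-\phi_t\Vert_{L^\infty}$ by $\mathcal{P}$-type quantities (through the divergence condition and Lemma~\ref{L04}) and absorb the dissipation factor by Young's inequality with a small parameter, rather than by a smallness of $w-\phi_t$ that is not available.
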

\colb

\begin{proof}[Proof of Lemma~\ref{L06}]
First, we write \eqref{EQ18}$_1$ in the coordinate form,
\begin{align}
  \begin{split}
    &
    \partial_t v_\alpha
    -\Delta_{\HH}v_\alpha
    -a_{kl}\partial_k(a_{3l}\partial_3v_\alpha)
    -a_{3\beta}\partial_{3\beta} v_\alpha
    +v_\gamma a_{j\gamma}\partial_jv_\alpha
    \\&\indeq
    +\frac{1}{\partial_3\phi}(w-\phi_t)\partial_3v_\alpha
    +a_{k\alpha}\partial_kp=0
    ,
  \end{split}
   \label{EQ49}
\end{align}
for $\alpha=1,2$.
We multiply both sides of the equation~\eqref{EQ49} with $J=\partial_3 \phi$, which leads to
  \begin{align}
  \begin{split}
      &
      J\partial_t v_\alpha
      -J\Delta_{\HH}v_\alpha
      -2b_{3\beta}\partial_{3\beta} v_\alpha
      -\frac{1+b_{31}^2+b_{32}^2}{J}\partial^2_3v_\alpha
      -b_{kl}\partial_k a_{3l}\partial_3 v_\alpha
      \\&\indeq
      +v_\gamma b_{j\gamma}\partial_jv_\alpha
      +(w-\phi_t)\partial_3v_\alpha+b_{k\alpha}\partial_k p=0
     .  
  \end{split}
   \label{EQ50}
  \end{align}
We apply $\partial''$ to the equation \eqref{EQ50} and then take the  $L^2$-inner product with $\partial''v_\alpha$. Summing $\alpha$ from 1 to 2, the energy estimate for $\partial''v_\alpha$ may be written as
\begin{align}
    \begin{split}
        &\frac{1}{2}\frac{d}{dt}\int J\partial''v_{\alpha}\partial''v_{\alpha}
        +\int J\nablah\partial''v_\alpha
        \nablah\partial''v_\alpha
        +\int\frac{1+b_{31}^2+b_{32}^2}{J}
        \partial_3\partial''v_\alpha
        \partial_3\partial''v_\alpha
	\\&\indeq
	=\int\left(J\partial''\partial_tv_\alpha
        -\partial''(J\partial_tv_\alpha)\right)\partial''v_\alpha 
        -\int\nablah J\nablah \partial''v\partial''v_\alpha
        +\int \left(\partial''(J\Delta_{\HH}v_\alpha)
        -J\partial''\Delta_{\HH}v_\alpha\right)\partial''v_\alpha
	\\&\indeq\indeq
        -\int\partial_3\left(\frac{1+b_{31}^2+b_{32}^2}{J}\right)\partial''\partial_3 v_\alpha\partial''v_\alpha
        \\&\indeq\indeq
        +\int\left(\partial''\left(\frac{1+b_{31}^2+b_{32}^2}{J}\partial_{33}v_\alpha\right)
        -\frac{1+b_{31}^2+b_{32}^2}{J}\partial''\partial_{33}v_\alpha\right)\partial''v_\alpha
	\\&\indeq\indeq
        +\int\partial''\left(2b_{3\beta}\partial_{3\beta} v_\alpha
        +b_{kl}
        \partial_ka_{3l}\partial_3v_\alpha\right)\partial''v_\alpha
	\\&\indeq\indeq
	-\int\partial''\left(v_1b_{j1}\partial_jv_\alpha
        +v_2b_{j2}\partial_jv_\alpha
        +(w-\phi_t)\partial_3v_\alpha\right)\partial''v_\alpha
	\\&\indeq\indeq
        +\int \left(v_1b_{j1}\partial_j\partial''v_\alpha
        +v_2b_{j2}\partial_j\partial''v_\alpha
        +(w-\phi_t)\partial_3\partial''v_\alpha\right)\partial''v_\alpha
	\\&\indeq\indeq
	-\int (\partial''(b_{k\alpha}\partial_kp)
        -b_{k\alpha}\partial''\partial_kp)\partial''v_\alpha
        -\int (\partial''(b_{k\alpha}\partial_kv_\alpha)
        -b_{k\alpha}\partial''\partial_kv_\alpha)\partial''p
        -\int_{\mathbb{T}^2}\partial''w\partial''p
	.
    \end{split}
    \label{EQ51}
\end{align}
Note that the differentiation of the product
$\frac{1}{2}J\partial''v_\alpha\partial''v_\alpha$
with respect to $t$
generates a term 
$\frac{1}{2}J_t\partial''v_\alpha\partial''v_\alpha$.
Furthermore, observing that integration by parts in combination with Piola's identity and divergence free condition yields 
\begin{align}
    \begin{split}
    &\int \left(v_1b_{j1}\partial_j\partial''v_\alpha
    +v_2b_{j2}\partial_j\partial''v_\alpha
    +(w-\phi_t)\partial_3\partial''v_\alpha\right)\partial''v_\alpha
    \\&
    =-\frac{1}{2}\int (b_{j1}\partial_jv_1+b_{j2}\partial_jv_2+\partial_3w)\partial''v_\alpha\partial''v_\alpha
    -\frac{1}{2}\int (\partial_jb_{j1}v_1+\partial_jb_{j2}v_2)\partial''v_\alpha\partial''v_\alpha
    \\&\indeq\indeq
    +\frac{1}{2}\int \partial_3\phi_t\partial''v_\alpha\partial''v_\alpha
    =\frac{1}{2}J_t\partial''v_\alpha\partial''v_\alpha
    .
    \end{split}
\end{align}
Here we explain the appearance of the last three terms in \eqref{EQ51},
which result from the term
$\int\partial''(b_{k\alpha}\partial_k p)\partial''v_\alpha$.
Namely, first 
\begin{align}
    \begin{split}
        &\int\partial''(b_{k\alpha}\partial_k p)\partial''v_\alpha
        =\int \left(\partial''(b_{k\alpha}\partial_kp)
        -b_{k\alpha}\partial''\partial_kp\right)\partial''v_\alpha
        -\int \partial_k(b_{k\alpha}\partial'' v_\alpha)\partial'' p
        .
    \end{split}
    \label{EQ52}
\end{align}
The last term in \eqref{EQ52} may be rewritten as
\begin{align}
\begin{split}
        &-\int \partial_k(b_{k\alpha}\partial''v_\alpha)\partial'' p
        =
        -\int b_{k\alpha}\partial'' \partial_k v_\alpha\partial'' p
        \\&\indeq
        =\int \left(\partial''(b_{k\alpha}\partial_kv_\alpha)
        -b_{k\alpha}\partial''\partial_kv_\alpha\right)\partial'' p
        +\int \partial''\partial_3 w\partial''p
     ,
    \end{split}
    \llabel{EQ52b}
\end{align}
and applying the Fundamental Theorem of Calculus to the last term, we get
\begin{align}
\begin{split}
    &
\int\partial''(b_{k\alpha}\partial_k p)\partial''v_\alpha
\\&\indeq
	=-\int (\partial''(b_{k\alpha}\partial_kp)
        -b_{k\alpha}\partial''\partial_kp)\partial''v_\alpha
        -\int (\partial''(b_{k\alpha}\partial_kv)
        -b_{k\alpha}\partial''\partial_kv_\alpha)\partial''p
        -\int_{\mathbb{T}^2}\partial''w\partial''p
	,
    \end{split}
    \llabel{EQ52c}
\end{align}
explaining the last three terms in~\eqref{EQ51}.
The purpose of the above simplifications (integration by parts, Piola’s identity, and the incompressibility condition) is to isolate the term whose differentiation generates excessively high–order derivatives. These contributions cannot be absorbed by the dissipation terms, and are
canceled by a high-order term in the plate equation.

 We now  apply $\partial''$ to the equation \eqref{EQ07}, then take the  $L^2$-inner product with $\partial''\partial_th$, and then combine the resulting equality with the equation \eqref{EQ08} to obtain
\begin{align}
    \frac{1}{2}\frac{d}{dt}\int (\partial'' \partial_th)^2
    +\frac{1}{2}\frac{d}{dt}\int (\partial''\Delta_{\HH}h)^2
    =\int_{\mathbb{T}^2}\partial''p\partial''w
    .
    \label{EQ53}
\end{align}
Summing \eqref{EQ51} and~\eqref{EQ53}, we obtain our initial tangential estimate
\begin{align}
    \begin{split}
       &\frac{1}{2}\frac{d}{dt}\int J \partial''v_{\alpha}\partial''v_{\alpha}
        +\int J\nablah\partial''v_\alpha\nablah\partial''v_\alpha
        +\int\frac{1+b_{31}^2+b_{32}^2}{J}\partial_3\partial''v_\alpha\partial_3\partial''v_\alpha
        \\&\indeq
        +\frac{1}{2}\frac{d}{dt}\int (\partial'' \partial_th)^2
        +\frac{1}{2}\frac{d}{dt}\int (\partial''\Delta_{\HH}h)^2
	\\&\indeq
	=\int\left(J\partial''\partial_tv_\alpha
        -\partial''(J\partial_tv_\alpha)\right)\partial''v_\alpha 
        -\int\nablah J\nablah\partial''v_\alpha\partial''v_\alpha
        +\int \left(\partial''(J\Delta_{\HH}v_\alpha)
        -J\partial''\Delta_{\HH}v_\alpha\right)\partial''v_\alpha
	\\&\indeq\indeq
        -\int\partial_3\left(\frac{1+b_{31}^2+b_{32}^2}{J}\right)\partial''\partial_3 v_\alpha\partial''v_\alpha
        \\&\indeq\indeq
        +\int\left(\partial''\left(\frac{1+b_{31}^2+b_{32}^2}{J}\partial_{33}v_\alpha\right)
        -\frac{1+b_{31}^2+b_{32}^2}{J}\partial''\partial_{33}v_\alpha\right)\partial''v_\alpha
	\\&\indeq\indeq
        +\int\partial''\left(2b_{3\beta}\partial_{3\beta} v
        +b_{kl}
        \partial_ka_{3l}\partial_3v_\alpha\right)\partial''v_\alpha
	\\&\indeq\indeq
	-\int\partial''\left(v_1b_{j1}\partial_jv_\alpha
        +v_2b_{j2}\partial_jv_\alpha
        +(w-\phi_t)\partial_3v_\alpha\right)\partial''v_\alpha
	\\&\indeq\indeq
        +\int \left(v_1b_{j1}\partial_j\partial''v_\alpha
        +v_2b_{j2}\partial_j\partial''v_\alpha
        +(w-\phi_t)\partial_3\partial''v_\alpha\right)\partial''v_\alpha
	\\&\indeq\indeq
        -\int (\partial''(b_{k\alpha}\partial_kv_\alpha)
        -b_{k\alpha}\partial''\partial_kv_\alpha)\partial''p
        -\int (\partial''(b_{k\alpha}\partial_kp)
        -b_{k\alpha}\partial''\partial_kp)\partial''v_\alpha
        \\&\indeq
        =I_1+I_2+\cdots+I_{10}
	.
    \end{split}
    \label{EQ54}
\end{align}
We estimate the terms above in order, starting with
\begin{align}
\begin{split}
    I_1
    &=\int\left(J\partial''\partial_tv_\alpha
    -\partial''(J\partial_tv_\alpha)\right)\partial''v_\alpha
    \\&
    \lesssim\|\partial'J\|_{L^\infty}\|\partial'v_t\|_{L^2}\|\partial''v\|_{L^2}
    +\|\partial''J\|_{L^6}\|v_t\|_{L^3}\|\partial''v\|_{L^2}
    \\&
    \lesssim\|h\|_{H^4}\|v_t\|_{H^1}\|\partial''v\|_{L^2}+\|h\|_{H^{3.5}}\|v_t\|^{\frac{1}{2}}_{L^2}\|v_t\|^{\frac{1}{2}}_{H^1}
    \|\partial''v\|_{L^2}
    \\&\lesssim
    \mathcal{P}
   +\epsilon(\|v_t\|_{H^1}^2+\|v\|_{H^3}^2)
   .
   \label{EQ551}
\end{split}
\end{align}
The sum of the second and third terms on the right-hand side of \eqref{EQ54} may be estimated by Hölder’s inequality as
\begin{align}
\begin{split}
    I_2+I_3
    &=
    -\int\nablah J\nablah\partial''v_\alpha\partial''v_\alpha
    +\int \left(\partial''(J\Delta_{\HH}v_\alpha)
    -J\partial''\Delta_{\HH}v_\alpha\right)\partial''v_\alpha
    \\&
    \lesssim\|\nablah J\|_{L^\infty}\|\nablah\partial''v\|_{L^2}\|\partial''v\|_{L^2}
    +\|\partial''J\|_{L^3}\|\Delta_\HH v\|_{L^6}\|\partial''v\|_{L^2}
    \\&\indeq
    +\|\partial'J\|_{L^\infty}\|\partial'\Delta_\HH v\|_{L^2}\|\partial''v\|_{L^2}
    \\&
    \lesssim\|h\|_{H^4}\|\nablah\partial''v\|\|\partial''v\|_{L^2}
    +\|h\|_{H^{3}}\|\Delta_\HH v\|_{H^1}\|\partial''v\|_{L^2}
    +\|h\|_{H^4}\|\partial'\Delta_\HH v\|_{L^2}\|\partial''v\|_{L^2}
    \\&
    \lesssim \mathcal{P}+\epsilon \|v\|_{H^3}^2
    .
\end{split}
\label{EQ552}
\end{align}
The fourth term can again be estimated, in the same manner as the first two terms, by applying H\"older’s inequality in the 
$L^\infty$-$L^2$-$L^2$ form together with Lemma~\ref{L01},
leading to
  \begin{equation*}
   I_4\lec     \mathcal{P}+\epsilon \|v\|_{H^3}^2
   .
   \llabel{EQ112}
  \end{equation*}
However, for $I_5$, such a technique fails due to the higher boundary regularity required by the Sobolev embedding into~$L^\infty$.
Therefore, we resort to anisotropic estimates and write
\begin{align}
    \begin{split}
    I_5
    &=\int\left(\partial''\left(\frac{1+b_{31}^2+b_{32}^2}{J}\partial_{33}v_\alpha\right)
        -\frac{1+b_{31}^2+b_{32}^2}{J}\partial''\partial_{33}v_\alpha\right)\partial''v_\alpha
    \\&
    \lesssim
    \left\|\|\partial''J^{-1}+\partial''(b_{33}(a_{31}^2+a_{32}^2))\|_{L^{\infty}_z}\right\|_{L^4_\HH}\left\|
    \|\partial_{33}v\|_{L^2_z}\right\|_{L^4_\HH}\left\|
    \|\partial''v\|_{L^2_z}\right\|_{L^2_\HH}
    \\&\indeq
    +\|\partial' J^{-1}+\partial'(b_{33}(a_{31}^2+a_{32}^2))\|_{L^\infty}
    \|\partial_{33}\partial'v\|_{L^2}
    \|\partial''v\|_{L^2}
    \\&\lesssim 
    P(\|h\|_{H^4})\|\partial_{33}v\|_{L^2}^\frac{1}{2}\|\partial_{33}\partial'v\|_{L^2}^\frac{1}{2}\|\partial_i^2v\|_{L^2}
    +P(\|h\|_{H^4})\|\partial_3^2\partial_iv\|_{L^2}\|\partial_i^2v\|_{L^2}
    \\&\lesssim
    \mathcal{P}+\epsilon \|v\|_{H^3}^2
    .
    \end{split}
    \label{EQ553}
\end{align}
To estimate $I_6$, we apply integration by parts by writing
\begin{align}
\begin{split}
        I_6&=\int\partial''\left(2b_{3\beta}\partial_{3\beta} v_\alpha
        +b_{kl}
        \partial_ka_{3l}\partial_3v_\alpha\right)\partial''v_\alpha
        \\&=\int 2\partial''b_{3\beta}\partial_{3\beta} v_\alpha\partial''v_\alpha
        +4\int\partial'b_{3\beta}\partial'\partial_{3\beta} v_\alpha\partial''v_\alpha
        -2\int\partial'\partial_{3\beta} v_\alpha\partial'(b_{3\beta}\partial''v_\alpha)
        \\&\indeq
        +\int \partial''(b_{kl}\partial_k a_{3l})\partial_3v_\alpha\partial''v_\alpha
        +2\int\partial'(b_{kl}\partial_ka_{3l})\partial'\partial_3v_\alpha\partial''v_\alpha
        +\int b_{kl}\partial_ka_{3l}\partial''\partial_3v_\alpha\partial''v_\alpha
        \\&
        \lesssim
        \left\|\Vert\partial''b_{3\beta}\|_{L^\infty_z}\right\|_{L^4_\HH}
        \left\|\partial_{3\beta} v\|_{L^2_z}\right\|_{L^4_\HH}
        \|\partial''v\|_{L^2}
        +\|\partial'b_{3\beta}\|_{L^\infty}
        \|\partial'\partial_{3\beta} v\|_{L^2}
        \|\partial''v\|_{L^2}
        \\&\indeq
        +\|b_{3\beta}\|_{L^\infty}\|\partial'\partial_{3\beta} v\|_{L^2}\|\partial'''v\|_{L^2}
        +\left\|\|\partial''(b_{kl}
        \partial_ka_{3l})\|_{L^\infty_z}\right\|_{L^4_\HH}
        \left\|\|\partial_3 v\|_{L^2_z}\right\|_{L^4_\HH}
        \|\partial''v\|_{L^2}
       \\&\indeq
        +\|\partial'(b_{kl}
        \partial_ka_{3l})\|_{L^\infty}
        \|\partial'\partial_3 v\|_{L^2}
        \|\partial''v\|_{L^2}
        +\|(b_{kl}
        \partial_ka_{3l})\|_{L^\infty}
        \|\partial''\partial_3 v\|_{L^2}\|\partial''v\|_{L^2}
        \\&
        \lesssim
        \mathcal{P}+\epsilon \|v\|_{H^3}^2
        ,
        \end{split}
        \label{EQ554}
\end{align}
where we apply Lemma~\ref{L04}. Collecting the above estimates \eqref{EQ551}, \eqref{EQ552}, \eqref{EQ553}, and \eqref{EQ554}, we arrive at
  \begin{equation}
   \sum_{k=1}^6 I_k
   \leq
   \mathcal{P}
   +(\epsilon+\PPs)(\|v_t\|_{H^1}^2+\|v\|_{H^3}^2)
   .
   \llabel{EQ56}
  \end{equation}
The sum
\begin{align}
  \begin{split}
      I_7 + I_8
        &=
        -\int\partial''\left(v_1b_{j1}\partial_jv_\alpha
        +v_2b_{j2}\partial_jv_\alpha
        +(w-\phi_t)\partial_3v_\alpha\right)\partial''v_\alpha
    \\&\indeq
        +\int \left(v_1b_{j1}\partial_j\partial''v_\alpha
        +v_2b_{j2}\partial_j\partial''v_\alpha
        +(w-\phi_t)\partial_3\partial''v_\alpha\right)\partial''v_\alpha
  ,
  \end{split}  
  \label{EQ57}
\end{align}
which makes the highest-order contribution,
cannot be absorbed by the dissipation terms.
To estimate it, we employ Lemma~\ref{L04}.
To proceed with a treatment of the sum in \eqref{EQ57},
we first rewrite it as
\begin{align}
  \begin{split}
      I_7 + I_8
        &=
        -\int\partial''b_{j\beta}v_{\beta}
    \partial_j v_\alpha\partial''v_\alpha
    -2    \int\partial'b_{j\beta}\partial'v_{\beta}
    \partial_j v_\alpha\partial''v_\alpha
   \\&\indeq
    -2\int\partial'b_{j\beta}v_{\beta}
    \partial'\partial_j v_\alpha\partial''v_\alpha
    -2\int b_{j\beta}\partial''v_{\beta}
    \partial_j v_\alpha\partial''v_\alpha
    -2\int b_{j\beta}\partial'v_{\beta}
    \partial_j \partial'v_\alpha\partial''v_\alpha
   \\&\indeq
    - \int \partial''w\partial_3v_\alpha\partial''v_\alpha
    -2\int \partial'w\partial_3\partial'v_\alpha\partial''v_\alpha
    +    \int(
        \partial''\phi_t\partial_3v_\alpha\partial''v_\alpha
        +2\partial'\phi_t\partial'\partial_3v_\alpha\partial''v_\alpha
          )
   \\&
   = J_1 + \ldots + J_8
   .
  \end{split}  
   \label{EQ27}
\end{align}
Using the anisotropic estimate~\eqref{EQ151}, we have
\begin{align}
\begin{split}
    J_1
    &=\int\partial''b_{j\beta}v_{\beta}
    \partial_j v_\alpha\partial''v_\alpha
    \\&
    \lesssim
    \left\Vert \Vert \partial''b_{j\beta}\Vert_{L^\infty_z}\right\Vert_{L^2_{\HH}}
    \Vert v_\beta\Vert_{L^\infty}
    \left\Vert \Vert \partial'v_\alpha\Vert_{L^2_z}\Vert \right\Vert_{L^\infty_{\HH}}
    \Vert \partial''v\Vert_{L^2}
    \\&\indeq
    +\left\Vert \Vert \partial''b_{j\beta}\Vert_{L^\infty_z}\right\Vert_{L^2_{\HH}}\left\Vert 
    \Vert v_{\beta}\Vert_{L^\infty_z}\right\Vert_{L^\infty_{\HH}}
    \left\Vert \Vert \partial_3v_\alpha\Vert_{L^2_z}\right\Vert_{L^4_{\HH}}
    \left\Vert \Vert \partial''v_\alpha\Vert_{L^2_z}\right\Vert_{L^4_{\HH}}
    \\&
    \lesssim 
    \Vert h\Vert_{H^4}
    \left(
    \Vert v\Vert ^\frac{1}{4}_{L^2}
    \Vert \bard''v\Vert ^\frac{1}{4}_{L^2}
    \Vert \partial_3v\Vert ^\frac{1}{4}_{L^2}
    \Vert \partial_3\bard''v\Vert ^\frac{1}{4}_{L^2}
    \right)\left(
    \Vert \partial'v\Vert_{L^2}^\frac{1}{2}
    \Vert \partial'''v\Vert_{L^2}^\frac{1}{2}
    \right)
    \Vert \partial''v\Vert_{L^2_{\HH}}
    \\&\indeq
    +\Vert h\Vert_{H^4}
    \left(\Vert v\Vert ^\frac{1}{4}_{L^2}
    \Vert \bard''v\Vert ^\frac{1}{4}_{L^2}
    \Vert \partial_3v\Vert ^\frac{1}{4}_{L^2}
    \Vert \partial_3\bard''v\Vert ^\frac{1}{4}_{L^2}\right)
    \left(
    \Vert \partial_3v\Vert_{L^2}^\frac{1}{2}
    \Vert \partial_3\partial'v\Vert ^\frac{1}{2}_{L^2}
    \right)\left(
    \Vert \partial''v\Vert_{L^2}^\frac{1}{2}
    \Vert \partial'''v\Vert_{L^2}^\frac{1}{2}
    \right)
    ,
    \end{split}
   \llabel{EQ110}
    \end{align}
from where
\begin{align}
\begin{split}
    J_1
    &
    \lesssim
    \Vert h\Vert_{H^4}
    \Vert v\Vert ^\frac{1}{4}_{L^2}
    \Vert \bard''v\Vert ^\frac{5}{4}_{L^2}
    \Vert \partial_3v\Vert ^\frac{1}{4}_{L^2}
    \Vert \partial_3\bard''v\Vert ^\frac{1}{4}_{L^2}
    \Vert \partial'v\Vert_{L^2}^\frac{1}{2}
    \Vert \partial'''v\Vert_{L^2}^\frac{1}{2}
    \\&\indeq
    +\Vert h\Vert_{H^4}
    \Vert v\Vert ^\frac{5}{8}_{L^2}
    \Vert \bard''v\Vert ^\frac{3}{4}_{L^2}
    \Vert \partial_3\bard''v\Vert ^\frac{1}{4}_{L^2}
    \Vert \partial_{33}v\Vert_{L^2}^\frac{3}{8}
    \Vert \partial_3\partial'v\Vert ^\frac{1}{2}_{L^2}
    \Vert \partial'''v\Vert_{L^2}^\frac{1}{2}
    \\&
    \lesssim
    \mathcal{P}
    +\epsilon\Vert v\Vert_{H^3}^2
    ,
\end{split}
\llabel{EQ59}
\end{align}
using Young's inequality in the final step.
The estimates for the next four terms follow from a common argument based on anisotropic estimates.
First, we have
\begin{align}
    \begin{split}
    J_{2}
    &=
    -2\int\partial'b_{j\beta}\partial'v_{\beta}
    \partial_j v_\alpha\partial''v_\alpha
    \\&
    \lesssim 
    \PPs\left\Vert \Vert \partial'v\Vert_{L^\infty_z}\right\Vert_{L^2_\HH}
    \left\Vert \Vert \partial'v\Vert_{L^2_z}\right\Vert_{L^\infty_\HH}
    \Vert \partial''v\Vert_{L^2}
    +\PPs\left\Vert \Vert \partial'v\Vert_{L^\infty_z}\right\Vert_{L^2_\HH}
    \left\Vert \Vert \partial_3v\Vert_{L^2_z}\right\Vert_{L^\infty_\HH}\Vert \partial''v\Vert_{L^2}
    \\&\lesssim 
    \PPs\Vert \partial'v\Vert_{L^2}\Vert \partial_3\partial'v\Vert_{L^2}^\frac{1}{2}
    \Vert \bard'''v\Vert ^\frac{1}{2}_{L^2}
    \Vert \partial''v\Vert_{L^2}
    \\&\indeq
    +\PPs\Vert \partial'v\Vert_{L^2}^\frac{1}{2}
    \Vert \partial_3\partial'v\Vert_{L^2}^\frac{1}{2}
    \Vert \partial_3v\Vert ^\frac{1}{2}_{L^2}
    \Vert \partial_3\bard''v\Vert_{L^2}^\frac{1}{2}
    \Vert \partial''v\Vert_{L^2}
    \\&\lesssim 
    \mathcal{P}
    +\epsilon\Vert v\Vert_{H^3}^2
    .
    \end{split}
   \llabel{EQ126}
   \end{align}
By repeating the same anisotropic estimate for the other three terms, we obtain 
\begin{align}
\begin{split}
    J_{3}
    &=\int\partial'b_{j\beta}v_{\beta}
    \partial'\partial_j v_\alpha\partial''v_\alpha
    \lesssim 
    \PPs\Vert v\Vert_{L^\infty}\Vert \partial'\nabla v\Vert_{L^2}\Vert \partial''v\Vert_{L^2}
    \\&
    \lesssim 
    \PPs\Vert v\Vert_{L^2}^\frac{1}{4}
    \Vert \bard''v\Vert ^\frac{5}{4}_{L^2}
    \Vert \partial_3v\Vert_{L^2}^\frac{1}{4}
    \Vert \partial_3\bard''v\Vert_{L^2}^\frac{1}{4}
    (\Vert \partial''v\Vert_{L^2}+\Vert \partial_3\partial'v\Vert_{L^2})
    \lesssim 
    \mathcal{P}
    +\epsilon\Vert v\Vert_{H^3}^2
    .
    \end{split}
   \llabel{EQ127}
\end{align}
For the next term, we write
\begin{align}
\begin{split}
    J_{4}&=\int b_{j\beta}\partial''v_{\beta}
    \partial_j v_\alpha\partial''v_\alpha
    \lesssim
    \PPs
    \left\Vert \Vert \partial''v\Vert_{L^\infty_z}\Vert \right\Vert_{L^2_\HH}
    \left\Vert \Vert \nabla v\Vert_{L^2_z}\right\Vert_{L^\infty_\HH}
    \Vert \partial''v\Vert_{L^2}
    \\&\lesssim 
    \PPs\Vert \partial''v\Vert_{H^1}\Vert v\Vert_{H^1}^\frac{1}{2}\Vert \partial''v\Vert ^\frac{3}{2}_{L^2}
    \lesssim 
    \mathcal{P}
    +\epsilon\Vert v\Vert_{H^3}^2
    ,
\end{split}
   \llabel{EQ128}\end{align}
while Lemma~\ref{L04} yields
\begin{align}
    \begin{split}
    J_{5}
    &=-2\int b_{j\beta}\partial'v_{\beta}
    \partial_j \partial'v_\alpha\partial''v_\alpha
    \\&\lesssim 
    \PPs\left\Vert \Vert \partial'v\Vert_{L^\infty_z}\right\Vert_{L^4_\HH}
    \left\Vert \Vert \partial''v\Vert_{L^2_z}\right\Vert_{L^4_\HH}
    \Vert \partial''v\Vert_{L^2}
    +\PPs\left\Vert \Vert \partial'v\Vert_{L^\infty_z}\right\Vert_{L^4_\HH}
    \left\Vert \Vert \partial_3\partial'v\Vert_{L^2_z}\right\Vert_{L^4_\HH}\Vert \partial''v\Vert_{L^2}
    \\&\lesssim 
    \PPs\Vert \partial'v\Vert_{L^2}^\frac{1}{4}
    \Vert \partial_3\partial'v\Vert_{L^2}^\frac{1}{4}
    \Vert \partial''v\Vert_{L^2}^\frac{1}{4}
    \Vert \partial_3\partial''v\Vert_{L^2}^\frac{1}{4}
    \Vert \partial''v\Vert_{L^2}^\frac{1}{2}
    \Vert \partial'''v\Vert ^\frac{1}{2}_{L^2}
    \Vert \partial''v\Vert_{L^2}
    \\&\indeq
    +\PPs\Vert \partial'v\Vert_{L^2}^\frac{1}{4}
    \Vert \partial_3\partial'v\Vert_{L^2}^\frac{1}{4}
    \Vert \partial''v\Vert_{L^2}^\frac{1}{4}
    \Vert \partial_3\partial''v\Vert_{L^2}^\frac{1}{4}
    \Vert \partial_3\partial'v\Vert_{L^2}^\frac{1}{2}
    \Vert \partial_3\partial''v\Vert ^\frac{1}{2}_{L^2}
    \Vert \partial''v\Vert_{L^2}
    \\&\lesssim
    \mathcal{P}
    +\epsilon\Vert v\Vert_{H^3}^2
    .
    \end{split}
   \llabel{EQ129}\end{align}
Next, we estimate 
  \begin{equation*}
   J_6 + J_7
    =
    -
    \int \partial''w\partial_3v_\alpha\partial''v_\alpha
    -2\int \partial'w\partial_3\partial'v_\alpha\partial''v_\alpha
  .   
  \end{equation*}
Using anisotropic analysis~\eqref{EQ152},
we have
\begin{align}
\begin{split}
    J_6 + J_7
    &
    =-\int \partial''w\partial_3v_\alpha\partial''v_\alpha
    -2\int \partial'w\partial_3\partial'v_\alpha\partial''v_\alpha
    \\&
    \lesssim 
    \left\Vert \Vert \partial''w\Vert_{L^\infty_z}\right\Vert_{L^2_{\HH}}
    \left\Vert \Vert \partial_3v_\alpha\Vert_{L^2_z}\right\Vert_{L^\infty_{\HH}}
    \Vert \partial''v_\alpha\Vert_{L^2}
    +\left\Vert \Vert \partial'w\Vert_{L^4_{\HH}}\right\Vert_{L^\infty_z}
    \left\Vert \Vert \partial_3\partial'v_\alpha\Vert_{L^4_{\HH}}\right\Vert_{L^2_z}
    \Vert \partial''v_\alpha\Vert_{L^2}
    \\&
    \lesssim 
    \Vert \partial''w\Vert ^\frac{1}{2}_{L^2}
    \Vert \partial''\partial_3w\Vert ^\frac{1}{2}_{L^2}
    \Vert \partial_3v_\alpha\Vert ^\frac{1}{2}_{L^2}
    \Vert \partial_3\bard''v_\alpha\Vert ^\frac{1}{2}_{L^2}
    \Vert \partial''v_\alpha\Vert_{L^2}
    \\&\indeq
    +\Vert \partial' w\Vert ^\frac{1}{4}_{L^2}
    \Vert \partial'' w\Vert ^\frac{1}{4}_{L^2}
    \Vert \partial''\partial_3w\Vert ^\frac{1}{4}_{L^2}
    \Vert \partial'\partial_3w\Vert_{L^2}^\frac{1}{4}
    \Vert \partial_3\partial'v_\alpha\Vert_{L^2}^\frac{1}{2}
    \Vert \partial_3\partial''v_\alpha\Vert_{L^2}^\frac{1}{2}
    \Vert \partial''v_\alpha\Vert_{L^2}
    \\&
    =J_{61}+J_{62}
    .
\end{split}
\llabel{EQ61}
\end{align}
By exploiting the incompressibility condition, 
$w$ can be represented in integral form,
\begin{align}
    \begin{split}
    \Vert \partial''w\Vert_{L^2}
    &=\left\Vert \int_0^z
    \partial''
    (b_{11}\partial_1 v_1
    +b_{22}\partial_2 v_2
    -b_{31}\partial_3 v_1
    -b_{32}\partial_3 v_2
    )d\bar{z}\right\Vert_{L^2_{H,z}}
    \\&
    \leq
    \left\Vert \int_0^1
    \partial''
    (b_{11}\partial_1 v_1
    +b_{22}\partial_2 v_2
    -b_{31}\partial_3 v_1
    -b_{32}\partial_3 v_2
    )d\bar{z}\right\Vert_{L^2_{H,z}}
    \\&
    \leq \left\Vert \Vert 
    \partial''
    (b_{11}\partial_1 v_1
    +b_{22}\partial_2 v_2
    -b_{31}\partial_3 v_1
    -b_{32}\partial_3 v_2
    )\Vert_{L^1_z}
    \right\Vert_{L^2_\HH}
    \\&
    \lesssim
    \Vert 
    \partial''
    (b_{11}\partial_1 v_1
    +b_{22}\partial_2 v_2
    -b_{31}\partial_3 v_1
    -b_{32}\partial_3 v_2
    )\Vert_{L^2_{H,z}}
    .
    \end{split}
\llabel{EQ130}
\end{align}
However, owing to the fact that $\partial_3w$
is naturally involved in the incompressibility condition \eqref{EQ19a},
$\partial''\partial_3w$ can be estimated by the same upper bound as $\partial''w$.
Therefore, using
the divergence-free condition and \eqref{EQ42}, we have
\begin{align}
\begin{split}
    J_{61}&\lesssim \Vert 
    \partial''
    (b_{11}\partial_1 v_1
    +b_{22}v_2
    -b_{31}\partial_3 v_1
    -b_{32}\partial_3 v_2
    )\Vert_{L^2}
    \\&\indeq
    \times
    \Vert \partial_3v_\alpha\Vert ^\frac{1}{2}_{L^2}
    \Vert \partial_3\bard''v_\alpha\Vert ^\frac{1}{2}_{L^2}
    \Vert \partial''v_\alpha\Vert_{L^2}
    \\&
    \lesssim
    \Bigl(
    \Vert b\Vert_{L^\infty}\Vert \partial'''v\Vert_{L^2}
    +\Vert \partial'b\Vert_{L^\infty}\Vert \partial''v\Vert_{L^2}
    +\left\Vert \Vert \partial''b\Vert_{L^\infty_z}\right\Vert_{L^2_{\HH}}
    \left\Vert \Vert \partial'v\Vert_{L^2_z}\right\Vert_{L^\infty_{\HH}}
    \\&\indeq
    +\left\Vert \Vert \partial''b\Vert_{L^\infty_z}\right\Vert_{L^2_{\HH}}
    \left\Vert \Vert \partial_3v\Vert_{L^2_z}\right\Vert_{L^\infty_{\HH}}
    +\Vert \partial'b\Vert_{L^\infty}
    \Vert \partial_3\partial'v\Vert_{L^2}
    +\Vert b\Vert_{L^\infty}
    \Vert \partial_3\partial''v\Vert_{L^2}
    \Bigr)
    \\&\indeq\indeq
    \times
    \Vert v\Vert_{L^2}^\frac{1}{4}
    \Vert \partial_{33}v\Vert_{L^2}^\frac{1}{4}
    \Vert \partial_3\bard''v\Vert ^\frac{1}{2}_{L^2}
    \Vert \partial''v\Vert_{L^2}
    \\&
    \lesssim
    \mathcal{P}
    +\epsilon\Vert v\Vert_{H^3}^2
    .
\end{split}
\llabel{EQ62}
\end{align}
Using analogous estimates, we also have
  \begin{align}
  \begin{split}
    J_{62}&
    =\Vert \partial' w\Vert ^\frac{1}{4}_{L^2}
    \Vert \partial'' w\Vert ^\frac{1}{4}_{L^2}
    \Vert \partial''\partial_3w\Vert ^\frac{1}{4}_{L^2}
    \Vert \partial'\partial_3w\Vert_{L^2}^\frac{1}{4}
    \Vert \partial_3\partial'v_\alpha\Vert_{L^2}^\frac{1}{2}
    \Vert \partial_3\partial''v_\alpha\Vert_{L^2}^\frac{1}{2}
    \Vert \partial''v_\alpha\Vert_{L^2}
    \\&\lesssim \Vert 
    \partial''
    (b_{11}\partial_1 v_1
    +b_{22}v_2
    -b_{31}\partial_3 v_1
    -b_{32}\partial_3 v_2
    )\Vert_{L^2}
    \\&\indeq
    \times
    \Vert \partial_3\partial'v_\alpha\Vert ^\frac{1}{2}_{L^2}
    \Vert \partial_3\bard''v_\alpha\Vert ^\frac{1}{2}_{L^2}
    \Vert \partial''v_\alpha\Vert_{L^2}
    \\&
    \lesssim
    \Bigl(
    \Vert b\Vert_{L^\infty}\Vert \partial'''v\Vert_{L^2}
    +\Vert \partial'b\Vert_{L^\infty}\Vert \partial''v\Vert_{L^2}
    +\left\Vert \Vert \partial''b\Vert_{L^\infty_z}\right\Vert_{L^2_{\HH}}
    \left\Vert \Vert \partial'v\Vert_{L^2_z}\right\Vert_{L^\infty_{\HH}}
    \\&\indeq
    +\left\Vert \Vert \partial''b\Vert_{L^\infty_z}\right\Vert_{L^2_{\HH}}
    \left\Vert \Vert \partial_3v\Vert_{L^2_z}\right\Vert_{L^\infty_{\HH}}
    +\Vert \partial'b\Vert_{L^\infty}
    \Vert \partial_3\partial'v\Vert_{L^2}
    +\Vert b\Vert_{L^\infty}
    \Vert \partial_3\partial''v\Vert_{L^2}
    \Bigr)
    \\&\indeq\indeq
    \times
    \Vert \partial'v\Vert_{L^2}^\frac{1}{4}
    \Vert \partial_{33}\partial'v\Vert_{L^2}^\frac{1}{4}
    \Vert \partial_3\bard''v\Vert ^\frac{1}{2}_{L^2}
    \Vert \partial''v\Vert_{L^2}
    \\&
    \lesssim
    \mathcal{P}
    +\epsilon\Vert v\Vert_{H^3}^2
    .
   \llabel{EQ114}
   \end{split}
  \end{align}

We still need to estimate the term in \eqref{EQ57} involving $\phi_t$,
for which we have
  \begin{align}
    \begin{split}
    J_8
    &=
    -
    \int(
        \partial''\phi_t\partial_3v_\alpha\partial''v_\alpha
        +2\partial'\phi_t\partial'\partial_3v_\alpha\partial''v_\alpha
          )
    \\&\lesssim
    \left\Vert \Vert \partial''\phi_t\Vert_{L^2_z}\right\Vert_{L^4_\HH}
    \left\Vert \Vert \partial_3 v\Vert_{L^2_z}\right\Vert_{L^4_\HH}
    \left\Vert \Vert \partial''v\Vert_{L^\infty_z}\right\Vert_{L^2_\HH}
    +\left\Vert \Vert \partial'\phi_t\Vert_{L^2_z}\right\Vert_{L^\infty_\HH}
    \left\Vert \Vert \partial_3\partial'v\Vert_{L^2_z}\right\Vert_{L^2_\HH}
    \left\Vert \Vert \partial''v\Vert_{L^\infty_z}\right\Vert_{L^2_\HH}
    \\&\lesssim 
    \mathcal{P}\Vert \partial_3v\Vert ^\frac{1}{2}_{L^2}
    \Vert \partial_3\partial'v\Vert_{L^2}^\frac{1}{2}
    \Vert \partial_3\partial''v\Vert_{L^2}^\frac{1}{2}
    \Vert \partial''v\Vert ^\frac{1}{2}_{L^2}
    +\mathcal{P}\Vert \partial_3\partial'v\Vert_{L^2}
    \Vert \partial''v\Vert_{L^2}^\frac{1}{2}
    \Vert \partial''\partial_3v\Vert_{L^2}^\frac{1}{2}
    \\&
    \lesssim \mathcal{P}+\epsilon\Vert v\Vert ^2_{H^3}
    .
    \end{split}
   \llabel{EQ132}
\end{align}
Summarizing all the estimates on \eqref{EQ57}, we thus
obtain
  \begin{equation*}
   I_7
   + I_8
   \lec
   \mathcal{P}
   +\epsilon(\Vert v_t\Vert_{H^1}^2+\Vert v\Vert_{H^3}^2)
   .
  \end{equation*}
For~$I_9$, we again use that the pressure $p$ is independent of the variable $z$, together with Lemma~\ref{L02}.
We have
\begin{align}
\begin{split}
     I_9
     &
    =-\int (\partial''(b_{k\alpha}\partial_kv_\alpha)
        -b_{k\alpha}\partial''\partial_kv_\alpha)\partial''p
    \\&
    =-\int (\partial''(b_{\beta\alpha}\partial_\beta v_\alpha)
    -b_{\beta\alpha}\partial''\partial_\beta v_\alpha)\partial''p
    -\int(\partial''(b_{3\alpha}\partial_3v_\alpha)
    -b_{3\alpha}\partial''\partial_3v_\alpha)\partial''p
    \\&
    \lesssim
    \Vert p\Vert_{H^2}\mathcal{P}
    +\Vert p\Vert_{H^2}\Vert v\Vert_{H^2}\PPs
    .
\end{split}
\llabel{EQ63}
\end{align}
Proceeding similarly for $I_{10}$, we
obtain
  \begin{align}
    \begin{split}
    I_{10}
    &=-\int (\partial''(b_{k\alpha}\partial_kp)
    -b_{k\alpha}\partial''\partial_kp)\partial''v_\alpha
    \\&\lesssim 
    \left\Vert \Vert \partial''b\Vert_{L^2_z}\right\Vert_{L^4_\HH}
    \left\Vert \Vert \partial'p\Vert_{L^\infty_z}\right\Vert_{L^4_\HH}
    \Vert \partial''v\Vert_{L^2}
    +\Vert \partial'b\Vert_{L^\infty}\Vert \partial''p\Vert_{L^2}\Vert \partial''v\Vert_{L^2}\\
    &\lesssim 
    \Vert h\Vert_{H^4}\Vert \partial'p\Vert ^\frac{1}{2}_{L^2}
    \Vert \partial''p\Vert_{L^2}^\frac{1}{2}
    \Vert \partial''v\Vert_{L^2}
    +
    \Vert h\Vert_{H^4}\Vert \partial''p\Vert_{L^2}\Vert \partial''v\Vert_{L^2}
    \\&
    \lesssim \Vert p\Vert_{H^2}\mathcal{P}
    .
    \end{split}
   \llabel{EQ131}
   \end{align}
Summarizing, we 
arrive at the tangential estimate~\eqref{EQ64}.
Note that we have used \eqref{EQ37} to remove~$J$
from two of the terms on the left-hand side of~\eqref{EQ54}.
\end{proof}

\startnewsection{$L^2$ bounds on the velocity}{sec14}
The main goal of this section is to prove the following statement.

\cole
\begin{Lemma}
\label{L05}
Under the assumptions of Theorem~\ref{T01}
and with $\epsilon\in(0,1/2]$,
we have for the
two normal derivatives of the velocity,
\begin{align}
\begin{split}
    \Vert \partial_{33}v\Vert_{L^2}\lesssim
    (\PPs+\epsilon)\Vert \partial_{33}v\Vert_{L^2}
    +\Vert p\Vert_{L^2}\mathcal{P}
    +\mathcal{P}
    .
\end{split}
\label{EQ69}
\end{align}
and 
\begin{align}
    \begin{split}
    \Vert \partial_{33}\partial'v\Vert_{L^2} 
    &\lesssim
    \Vert \partial'''v\Vert_{L^2}
    +\Vert \partial_t\partial'v\Vert_{L^2}
    +\Vert p\Vert_{H^2}
    \mathcal{P}
    \\&\indeq
    +(\PPs+\epsilon)
    (\Vert \partial_{33}v\Vert_{L^2}
    +\Vert \partial_3\partial''v\Vert_{L^2}
    +\Vert \partial_{33}\partial'v\Vert_{L^2})
    +\mathcal{P}
    .
    \end{split}
   \label{EQ74}
\end{align}
while for the three derivatives, we have
\begin{align}
    \begin{split}
    \Vert \partial_{333}v\Vert_{L^2}
    \lesssim&
    (\PPs+\epsilon)
    (\Vert\partial_{33}v\Vert_{L^2}
    +\Vert\partial_{33}\partial'v\Vert_{L^2}
    +\Vert\partial_{333}v\Vert_{L^2})
    \\&\indeq
    +\Vert \partial_3\partial''v\Vert_{L^2}
    +\Vert\partial_t\partial_3v\Vert_{L^2}
    +\epsilon\Vert\partial'''v\Vert_{L^2}
    +\mathcal{P}
    .
    \end{split}
    \label{EQ79}
\end{align}
\end{Lemma}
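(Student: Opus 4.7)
\textbf{Proof plan for Lemma~\ref{L05}.}
The three estimates all rest on viewing the momentum equation \eqref{EQ50} as an equation for the unknown $\partial_{33}v_\alpha$: isolating the one-derivative-in-$z$-squared term and using that its coefficient is close to $1$ gives
\begin{equation*}
\frac{1+b_{31}^2+b_{32}^2}{J}\,\partial_{33}v_\alpha
= -J\partial_t v_\alpha + J\Delta_{\HH} v_\alpha + 2b_{3\beta}\partial_{3\beta}v_\alpha
+ b_{kl}\partial_k a_{3l}\,\partial_3 v_\alpha
- v_\gamma b_{j\gamma}\partial_j v_\alpha
- (w-\phi_t)\partial_3 v_\alpha
- b_{k\alpha}\partial_k p.
\end{equation*}
By Lemma~\ref{L02} the prefactor on the left lies in $[1/2,3/2]$, so estimating the $L^2$ norm of both sides will yield \eqref{EQ69}. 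The estimates \eqref{EQ74} and \eqref{EQ79} are then obtained by applying one horizontal derivative $\partial'$ (respectively the vertical derivative $\partial_3$) to the same identity and rereading it in the same way.

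First I would prove \eqref{EQ69}. Take the $L^2$ norm of the right-hand side term by term. The linear contributions $J\partial_t v_\alpha$ and $J\Delta_{\HH}v_\alpha$ are dominated by $\|v_t\|_{L^2}$ and $\|\partial''v\|_{L^2}$, which are part of~$\mathcal{P}$. The mixed derivative $b_{3\beta}\partial_{3\beta}v_\alpha$ carries the small factor $\|b_{3\beta}\|_{L^\infty}\lesssim\epsilon$, and the interpolation from Lemma~\ref{L03} (with $m=2$, $\gamma=\beta=1/2$) gives $\|\partial_3\partial'v\|_{L^2}\lesssim\|\partial_{33}v\|_{L^2}^{1/2}\|\partial''v\|_{L^2}^{1/2}$, so this term is controlled by $\epsilon\|\partial_{33}v\|_{L^2}+\mathcal{P}$. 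The lower-order $b_{kl}\partial_k a_{3l}\partial_3 v_\alpha$ factor bounds in $\mathcal{P}$ via Lemma~\ref{L01}. The nonlinear terms $v\cdot\nabla v$ and $(w-\phi_t)\partial_3 v$ are handled by Agmon- and anisotropic-type inequalities (Lemma~\ref{L04}) to place $v$ (or $w$) in $L^\infty$ and the derivatives in $L^2$, again yielding contributions bounded by $\mathcal{P}$. Finally, using $\partial_3 p=0$, the pressure term reduces to $b_{\beta\alpha}\partial_\beta p$, contributing $\|p\|_{L^2}\mathcal{P}$. Combining, we obtain \eqref{EQ69}.

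For \eqref{EQ74}, apply $\partial'$ to \eqref{EQ50}, solve for $\partial_{33}\partial'v_\alpha$, and repeat the above analysis on
\begin{equation*}
\frac{1+b_{31}^2+b_{32}^2}{J}\,\partial_{33}\partial'v_\alpha
=\partial'\bigl(\text{previous RHS}\bigr)+\bigl[\partial',\tfrac{1+b_{31}^2+b_{32}^2}{J}\bigr]\partial_{33}v_\alpha.
\end{equation*}
The new top-order linear contributions are $\|\partial_t\partial'v\|_{L^2}$, $\|\partial'''v\|_{L^2}$, and $\|\partial''p\|_{L^2}\lesssim\|p\|_{H^2}$, which appear on the right-hand side of \eqref{EQ74}. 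Commutators with $\partial'$ produce either small $(\PPs+\epsilon)$ factors multiplying second-order derivatives of $v$ (to be absorbed on the right-hand side or in $\mathcal{P}$), or Sobolev products handled anisotropically as in Lemma~\ref{L06}; Lemma~\ref{L02} ensures that products of ALE coefficients with top-order derivatives of $v$ come with the required smallness.

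For \eqref{EQ79}, apply $\partial_3$ instead: the key simplification is that $\partial_3 p=0$, so the pressure term reduces to $\partial_3 b_{\beta\alpha}\partial_\beta p$, whose coefficient $\partial_3 b$ is small by Lemma~\ref{L02}, and this entire contribution is absorbed into the $\mathcal{P}+(\PPs+\epsilon)\|\partial_{33}\cdots\|$ structure of the right-hand side. The differentiated linear terms contribute $\|\partial_t\partial_3 v\|_{L^2}$ and $\|\partial_3\partial''v\|_{L^2}$; the commutator with $J\Delta_{\HH}$ produces $\epsilon\|\partial'''v\|_{L^2}$ plus lower-order terms bounded by $\mathcal{P}$; and all remaining terms are handled by the same anisotropic bookkeeping. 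The main obstacle across all three parts is exactly this bookkeeping: one must carefully pair each vertical derivative on an ALE coefficient with an interpolation from Lemma~\ref{L03} or a mixed-norm inequality from Lemma~\ref{L04} so that what appears on the right-hand side is either one of the admitted top-order norms, an $(\PPs+\epsilon)$-multiple of the unknown (absorbable for small time), or a quantity contained in~$\mathcal{P}$.
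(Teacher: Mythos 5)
Your overall strategy---isolating the second vertical derivative from the momentum equation, taking $L^2$ norms, and controlling the remaining terms with Lemmas~\ref{L02}--\ref{L04} together with the incompressibility relation for $w$---is the same as the paper's, which however works from the un-multiplied equation \eqref{EQ49}, rearranged as \eqref{EQ65}, rather than from the $J$-multiplied form \eqref{EQ50}. For \eqref{EQ69} and \eqref{EQ74} this difference is harmless, since the prefactor $(1+b_{31}^2+b_{32}^2)/J$ is bounded above and below by Lemma~\ref{L02} and \eqref{EQ37}, and your list of top-order contributions ($\Vert v_t\Vert_{L^2}$, $\Vert\partial'''v\Vert_{L^2}$, $\Vert\partial_t\partial'v\Vert_{L^2}$, $\Vert p\Vert_{H^2}$, plus $(\PPs+\epsilon)$-multiples of the unknowns) matches the paper's.

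For \eqref{EQ79}, however, your treatment of the pressure has a genuine gap. In your $b$-form the pressure term is $b_{k\alpha}\partial_k p=J\,\partial_\alpha p$ (using $\partial_3p=0$ and $b_{\beta\alpha}=J\delta_{\beta\alpha}$ for $\beta,\alpha\in\{1,2\}$), so applying $\partial_3$ leaves $\partial_3J\,\partial_\alpha p=\partial_{33}\phi\,\partial_\alpha p$. This is small in the coefficient ($\Vert\partial_3 b\Vert_{L^\infty}\lesssim\epsilon$) but it is \emph{not} absorbable into ``$\mathcal{P}+(\PPs+\epsilon)\Vert\partial_{33}\cdots\Vert$'': the right-hand side of \eqref{EQ79} contains no pressure norm, and $\mathcal{P}$ is a polynomial only in norms of $v$, $v_t$, $h$, $h_t$, $h_{tt}$, so a term of size $\epsilon\Vert\nabla_\HH p\Vert_{L^2}$ cannot be hidden there. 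The paper avoids this entirely by differentiating the $a$-form: there the horizontal pressure coefficients $a_{\beta\alpha}=\delta_{\beta\alpha}$ are constants and $a_{3\alpha}\partial_3p=0$, so $\partial_3(a_{k\alpha}\partial_kp)\equiv0$ and the $\partial_{333}$ bound is exactly pressure-free---a fact used explicitly in Section~\ref{sec17}, where the closure hinges on the $\Vert\partial_{333}v\Vert_{L^2}$ estimate being independent of $\Vert p\Vert_{H^2}$ before substitution into \eqref{EQ97}. To repair your argument you must either switch to the $a$-form for this step or carry an $\epsilon\Vert p\Vert_{H^1}$ term and rework the final absorption via Lemma~\ref{L07}; the absorption you assert does not go through as written. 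A smaller imprecision: saying the nonlinear terms are ``bounded by $\mathcal{P}$'' is too strong, since $\Vert\partial_3v\Vert_{L^2}$, $\Vert\partial_3\partial'v\Vert_{L^2}$ and third-order derivatives are not contained in $\mathcal{P}$; as in the paper (and as your closing remark about bookkeeping suggests), these must be interpolated via Lemmas~\ref{L03} and~\ref{L04} so that they appear only with $(\PPs+\epsilon)$ prefactors or as the admitted top-order norms.
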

\colb

\begin{proof}[Proof of Lemma~\ref{L05}]
We first isolate $\partial_{33}v$
from the velocity equation. Then, taking the $L^2$-norm on both sides provides a convenient starting point for quantitative control. By carrying out this procedure successively for three quantities, $\partial_{33}v$, $\partial_{33}\partial'v$, $\partial_{333}v$, we generate a chain of $L^2$-estimates in which each higher-order contribution can be bounded by lower-order terms together with the pressure. This sequential approach allows us to reduce the complexity of the expressions while maintaining sharp control over the relevant quantities.

After rearranging the terms in \eqref{EQ49}, we obtain
\begin{align}
\begin{split}
     \partial_{33}v_\alpha=&
     -\Delta_{\HH} v_\alpha
     -2a_{3\beta}\partial_{3\beta} v_\alpha
     -(a_{31}^2+a_{32}^2+a_{33}^2-1)\partial_{33}v_\alpha
     \\&
     +\partial_tv_\alpha
     -a_{kl}\partial_ka_{3l}\partial_3v_\alpha
     +v_\gamma a_{j\gamma}\partial_jv_\alpha
     +\frac{1}{\partial_3\phi}(w-\phi_t)\partial_3v_\alpha
     +a_{k\alpha}\partial_kp
\end{split}
\label{EQ65}
\end{align}
for $\alpha=1,2.$
By taking the $L^2$
norm of both sides of the equation and estimating each term using the triangle inequality, we get
\begin{align}
\begin{split}
     \Vert \partial_{33}v_\alpha\Vert_{L^2}\leq\sum_{m=1}^8 J^{(1)}_m
     ,
\end{split}
\llabel{EQ66}
\end{align}
where $J^{(1)}_1,\ldots,J^{(1)}_8$ denote the $L^2$ norms of the eight terms on the right-hand side of~\eqref{EQ65}.
By Lemma~\ref{L02}, all the linear terms in $v$ can be bounded as
\begin{align}
\begin{split}
    \sum_{m=1}^5 J^{(1)}_m+J^{(1)}_8
    &\lesssim
    \Vert\Delta_{\HH} v_\alpha\Vert_{L^2}
     +\Vert2a_{3\beta}\Vert_{L^\infty} 
     \Vert \partial_{3\beta}v_\alpha\Vert_{L^2}
     +\Vert a_{31}^2+a_{32}^2+a_{33}^2-1\Vert_{L^\infty}
     \Vert\partial_{33}v_\alpha\Vert_{L^2}
     \\&\indeq\indeq
     +\Vert\partial_tv_\alpha\Vert_{L^2}
     +\Vert a_{kl}\partial_ka_{3l}\Vert_{L^\infty}\Vert\partial_3v_\alpha\Vert_{L^2}
     +\Vert a_{k\alpha}\Vert_{L^\infty} \Vert\partial_kp\Vert_{L^2}
    \\&
    \lesssim
    (\Vert \partial''v\Vert_{L^2}
    +\Vert \partial_3v\Vert_{L^2}
    +\Vert \partial_3\partial'v\Vert_{L^2}
    +\Vert v_t\Vert_{L^2}
    +\Vert p\Vert_{H^2})
    \mathcal{P}
    +\Vert \partial_{33}v\Vert_{L^2}
    \PPs
    .
    \end{split}
    \label{EQ67}
\end{align}
An application of the interpolation inequality Lemma~\ref{L03} in combination with Young’s inequality yields the estimate
\begin{align}
    \begin{split}
    \sum_{m=1}^5 J^{(1)}_m+J^{(1)}_8
    &\lesssim
    (\Vert \partial''v\Vert_{L^2}
    +\Vert \partial_{33}v\Vert_{L^2}^\frac{1}{2}
     \Vert v\Vert_{L^2}^\frac{1}{2}
    +\Vert \partial_{33}v\Vert_{L^2}^\frac{1}{2}
     \Vert \partial''v\Vert_{L^2}^\frac{1}{2}
    +\Vert v_t\Vert_{L^2}
    +\Vert p\Vert_{H^2})
    \mathcal{P}
    +\Vert \partial_{33}v\Vert_{L^2}
    \PPs
    \\&
    \lesssim
    (\PPs+\epsilon)\Vert \partial_{33}v\Vert_{L^2}
    +\Vert p\Vert_{L^2}\mathcal{P}
    +\mathcal{P}
    .
    \end{split}
    \llabel{EQ67b}
\end{align}
In the treatment of the nonlinear terms, using Young’s inequality alone is not sufficient to obtain the desired estimate; we wish to employ interpolation so that the 
$\partial_3$ derivatives are concentrated as much as possible on a single term. Thus, by using Lemma ~\ref{L04} and the incompressibility condition, we write
\begin{align}
    \begin{split}
    J_7^{(1)}
    &=
    \left\Vert \frac{1}{\partial_3\phi}(w-\phi_t)\partial_3v_\alpha\right\Vert_{L^2}
    \\&
    =
    \left\Vert \frac{1}{\partial_3\phi}\right\Vert_{L^\infty}
    \left\Vert \Vert w\Vert_{L^\infty_z}\right\Vert_{L^4_{\HH}}
    \left\Vert \Vert \partial_3v\Vert_{L^2_z}\right\Vert_{L^4_{\HH}}
    +\left\Vert \frac{1}{\partial_3\phi}\right\Vert_{L^\infty}
    \Vert \phi_t\Vert_{L^3}
    \Vert \partial_3v\Vert_{L^6}
    \\&
    \lesssim\mathcal{P}
    \left\Vert
    \Vert b_{11}\partial_1 v_1
    +b_{22}\partial_2 v_2
    -b_{31}\partial_3 v_1
    -b_{32}\partial_3 v_2
    \Vert_{L^2_z}
    \right\Vert_{L_\HH^4}
    \Vert \partial_3v\Vert_{L^2}^\frac{1}{2}
    \Vert \partial_3\partial'v\Vert_{L^2}^\frac{1}{2}
    +\Vert \partial_3v\Vert_{L^2}^\frac{1}{2}
    \Vert \partial_{33}v\Vert_{L^2}^\frac{1}{2}
    \mathcal{P}
    \\&
    \lesssim
    \Vert \partial'v\Vert_{L^2}^\frac{1}{2}
    \Vert \partial''v\Vert_{L^2}^\frac{1}{2}
    \Vert \partial_3v\Vert_{L^2}^\frac{1}{2}
    \Vert \partial_3\partial'v\Vert_{L^2}^\frac{1}{2}
    \mathcal{P}
    +\Vert \partial_3v\Vert_{L^2}
    \Vert \partial_3\partial'v\Vert_{L^2}
    \PPs
    +\Vert \partial_3v\Vert_{L^2}^\frac{1}{2}
    \Vert \partial_{33}v\Vert_{L^2}^\frac{1}{2}
    \mathcal{P}
    .
    \end{split}
    \label{EQ68}
\end{align}
We employ Lemma~\ref{L02} here to control the coefficient $b$, while applying Sobolev and interpolation inequalities to obtain the final result. We note that in the expansion of $w$, the coefficient in front of $\partial_3v$ becomes sufficiently small as time tends to zero. We then make further use of Lemma~\ref{L03} to carry out an additional interpolation
\begin{align}
    \begin{split}
    J_7^{(1)}
    &\lesssim
    \left(
    \Vert v\Vert_{L^2}^\frac{1}{2}
    \Vert \partial_{33} v\Vert_{L^2}^\frac{1}{2}
    \right)
    \left(
    \Vert \partial''v\Vert_{L^2}^\frac{1}{2}
    \Vert \partial_{33} v\Vert_{L^2}^\frac{1}{2}
    \right)
    \PPs
    \\&\indeq
    +\Vert \partial'v\Vert_{L^2}^\frac{1}{2}
    \Vert \partial''v\Vert_{L^2}^\frac{1}{2}
    \left(
    \Vert \partial_{33}v\Vert_{L^2}^\frac{1}{4}
    \Vert v \Vert_{L^2}^\frac{1}{4}
    \right)
    \left(
    \Vert \partial_{33}v\Vert_{L^2}^\frac{1}{4}
    \Vert \partial''v \Vert_{L^2}^\frac{1}{4}
    \right)
    \mathcal{P}
    +\left(
    \Vert \partial_{33}v\Vert_{L^2}^\frac{1}{4}
    \Vert v \Vert_{L^2}^\frac{1}{4}
    \right)
    \Vert \partial_{33}v\Vert_{L^2}^\frac{1}{2}
    \mathcal{P}
    \\&
    \lesssim
    (\PPs+\epsilon)\Vert \partial_{33}v\Vert_{L^2}
    +\mathcal{P}
    .
    \llabel{EQ68b}
    \end{split}
\end{align}
In the present set of estimates, the highest-order contributions arise from the nonlinear term involving~$w$. Under the free boundary condition, the incompressibility constraint further produces terms containing~$\partial_3v$. However, the coefficients in front of such $\partial_3v$
-terms necessarily vanish as $t\to 0$. This structural feature makes it possible to incorporate the $\PPs$-type estimates, which capture the smallness in time and allow us to control the nonlinear interactions effectively.

The estimate for $J^{(1)}_6$
  is simpler and more favorable than that for $J_7^{(1)}$
  since it involves more tangential derivatives $\partial'$
rather than the normal derivative~$\partial_3$. Using Lemma~\ref{L04} and Young's inequality, we obtain
\begin{align}
    \begin{split}
        J_6^{(1)}&=\Vert v_\gamma a_{j\gamma}\partial_jv_\alpha\Vert_{L^2}
        \\&\lesssim
        \sum_{j=1,2}
        \left\Vert \Vert v_\gamma\Vert_{L^\infty_z}
        \right\Vert_{L^4_\HH} 
        \Vert a_{j\gamma}\Vert_{L^\infty}
        \left\Vert
        \Vert \partial_jv_\alpha\Vert_{L^2_z}\right\Vert_{L^4_\HH}
        +\left\Vert \Vert v_\gamma\Vert_{L^2_z}
        \right\Vert_{L^\infty_\HH} 
        \Vert a_{3\gamma}\Vert_{L^\infty}
        \left\Vert
        \Vert \partial_3 v_\alpha\Vert_{L^\infty_z}\right\Vert_{L^2_\HH}
        \\&\lesssim \mathcal{P}
        +\epsilon\Vert\partial_{33}v\Vert_{L^2}^2
    .
    \end{split}
    \llabel{EQ68c}
\end{align}
This yields the desired bound, and combining \eqref{EQ67} and \eqref{EQ68} leads to
\begin{align}
\begin{split}
    \Vert \partial_{33}v\Vert_{L^2}\lesssim
    (\PPs+\epsilon)\Vert \partial_{33}v\Vert_{L^2}
    +\Vert p\Vert_{L^2}\mathcal{P}
    +\mathcal{P}
    ,
\end{split}
\label{EQ69a}
\end{align}
which is \eqref{EQ69}.

By applying the tangential derivative $\partial'$ to the equation \eqref{EQ65}, we derive
\begin{align}
    \begin{split}
        \partial_{33}\partial'v_\alpha
        =&
        -\Delta_{\HH}\partial'v_\alpha
        -2\partial'(a_{3\beta}\partial_{3\beta} v_\alpha)
        \\&
        +2\partial'(a_{31}^2+a_{32}^2+a_{33}^2-1)\partial_{33}v_\alpha
        +2(a_{31}^2+a_{32}^2+a_{33}^2-1)\partial_{33}\partial'v_\alpha
        \\&
        +\partial_t\partial'v_\alpha
        +\partial'(a_{kl}\partial_ka_{3l}\partial_3v_\alpha)
        +\partial'(v_\gamma a_{j\gamma}\partial_jv_\alpha)
        \\&
        +\partial'(a_{33}(w-\phi_t)\partial_3v_\alpha)
        +\partial'(a_{k\alpha}\partial_kp)
    \end{split}
    \label{EQ70}
\end{align}
for $\alpha=1,2.$
Analogously, taking the $L^2$
norm on both sides of the equation \eqref{EQ70} yields
\begin{align}
    \Vert \partial_{33}\partial'v_\alpha\Vert_{L^2}\leq\sum_{m=1}^9 J^{(2)}_m.
    \llabel{EQ71}
\end{align}
Utilizing Lemma~\ref{L02} once more, we derive estimates for the linear terms,
\begin{equation}
\begin{split}
    &\sum_{m=1}^6 J^{(2)}_m+J^{(2)}_9\\
    &\lesssim
    \Vert\Delta_{\HH}\partial'v_\alpha\Vert_{L^2}
    +\Vert\partial'a_{3\beta}\Vert_{L^2}
    \Vert\partial_{3\beta} v_\alpha\Vert_{L^2}
    +\Vert a_{3\beta}\Vert_{L^2}
    \Vert\partial'\partial_{3\beta} v_\alpha\Vert_{L^2}
    \\&\indeq
    +\Vert\partial'(a_{31}^2+a_{32}^2+a_{33}^2-1)\Vert_{L^\infty}
    \Vert\partial_{33}v_\alpha\Vert_{L^2}
    +\Vert(a_{31}^2+a_{32}^2+a_{33}^2-1)\Vert_{L^\infty}
    \Vert\partial_{33}\partial'v_\alpha\Vert_{L^2}
    \\&\indeq
    +\Vert\partial_t\partial'v_\alpha\Vert_{L^2}
    +\Vert\partial'(a_{kl}\partial_ka_{3l})\Vert_{L^\infty}
    \Vert\partial_3v_\alpha\Vert_{L^2}
    +\Vert a_{kl}\partial_ka_{3l}\Vert_{L^\infty}
    \Vert\partial'\partial_3v_\alpha\Vert_{L^2}
    \\&\indeq
    +\Vert\partial'a_{k\alpha}\Vert_{L^\infty}
    \Vert\partial_kp\Vert_{L^2}
    +\Vert a_{k\alpha}\Vert_{L^\infty}
    \Vert\partial'\partial_kp\Vert_{L^2}
    \\&\lesssim
    \Vert \partial'''v\Vert_{L^2}
    +\Vert \partial_t\partial'v\Vert_{L^2}
    +(
    \Vert \partial_3v\Vert_{L^2}
    +\Vert \partial_3\partial'v\Vert_{L^2}
    +\Vert p\Vert_{H^2})
    \mathcal{P}
    \\&\indeq\indeq
    +(\Vert \partial_3\partial''v\Vert_{L^2}
    +\Vert \partial_{33}v\Vert_{L^2}
    +\Vert \partial_{33}\partial'v\Vert_{L^2})
    \PPs
    ,
    \label{EQ72}
    \end{split}
\end{equation}
whence, using in particular $\Vert a_{3\beta}\Vert_{L^2}\leq \PPs$,
\begin{equation}
\begin{split}
    \sum_{m=1}^6 J^{(2)}_m+J^{(2)}_9
    \lesssim
    \Vert \partial'''v\Vert_{L^2}
    +\Vert \partial_t\partial'v\Vert_{L^2}
    +\Vert p\Vert_{H^2}
    \mathcal{P}
    +(\Vert \partial_3\partial''v\Vert_{L^2}
    +\Vert \partial_{33}v\Vert_{L^2}
    +\Vert \partial_{33}\partial'v\Vert_{L^2})
    \PPs
    +\mathcal{P}
   .
   \end{split}
   \llabel{EQ116}
\end{equation}
In the treatment of the nonlinear terms under the $L^2$ estimate, the main tools we employ are Lemmas~\ref{L03} and \ref{L04},
\begin{align}
    \begin{split}
    J_8^{(2)}
    &=\Vert \partial'(a_{33}(w-\phi_t)\partial_3v_\alpha)\Vert_{L^2}
    \\&
    =
    \Vert \partial'a_{33}\Vert_{L^\infty}
    \left\Vert \Vert w\Vert_{L^\infty_z}\right\Vert_{L^4_{\HH}}
    \left\Vert \Vert \partial_3v\Vert_{L^2_z}\right\Vert_{L^4_{\HH}}
    +\Vert \partial'a_{33}\Vert_{L^\infty}
    \Vert \phi_t\Vert_{L^6}
    \Vert \partial_3v\Vert_{L^3}
    +\Vert a_{33}\Vert_{L^\infty}
    \Vert \partial'\phi_t\Vert_{L^6}
    \Vert \partial_3v\Vert_{L^3}
    \\&\indeq\indeq
    +\Vert a_{33}\Vert_{L^\infty}
    \left(
    \left\Vert \Vert \partial'w\Vert_{L^\infty_z}\right\Vert_{L^2_{\HH}}
    \left\Vert \Vert \partial_3v\Vert_{L^2_z}\right\Vert_{L^\infty_{\HH}}
    +\left\Vert \Vert w\Vert_{L^\infty_z}\right\Vert_{L^4_{\HH}}
    \left\Vert \Vert \partial'\partial_3v\Vert_{L^2_z}\right\Vert_{L^4_{\HH}}
    \right)
    \\&\indeq\indeq
    +\Vert a_{33}\Vert_{L^\infty}
    \Vert \phi_t\Vert_{L^\infty}
    \Vert \partial_3\partial'v\Vert_{L^2}
    ,    
    \end{split}
   \llabel{EQ58}
\end{align}
from where
\begin{align}
    \begin{split}
    J_8^{(2)}
    &
    \lec
    \PPs \left\Vert
    \Vert b_{11}\partial_1 v_1
    +b_{22}\partial_2 v_2
    -b_{31}\partial_3 v_1
    -b_{32}\partial_3 v_2
    \Vert_{L^2_z}
    \right\Vert_{L_\HH^4}
    \Vert \partial_3(\partial')^\frac{1}{2}v\Vert_{L^2}
    \\&\indeq
    +\mathcal{P}
    \left(
    \Vert \partial_{3}v\Vert_{L^2}
    +
    \Vert\partial_3v\Vert_{L^2}^\frac{1}{2}
    \Vert \partial_{33}v\Vert_{L^2}^\frac{1}{2}
    +
    \Vert \partial_{3}(\partial')^\frac{1}{2}v\Vert_{L^2}
    \right)
    \\&\indeq
    +\mathcal{P}\left\Vert
    \Vert 
    \partial'(
    b_{11}\partial_1 v_1
    +b_{22}\partial_2 v_2
    -b_{31}\partial_3 v_1
    -b_{32}\partial_3 v_2)
    \Vert_{L^2_z}
    \right\Vert_{L_\HH^2}
    \Vert \partial_3(\partial')^\frac{1}{2}v\Vert_{L^2}^\frac{1}{2}
    \Vert \partial_3(\partial')^\frac{3}{2}v\Vert_{L^2}^\frac{1}{2}
    \\&\indeq
    +\mathcal{P}
    \left\Vert
    \Vert b_{11}\partial_1 v_1
    +b_{22}\partial_2 v_2
    -b_{31}\partial_3 v_1
    -b_{32}\partial_3 v_2
    \Vert_{L^2_z}
    \right\Vert_{L_\HH^4}
    \Vert \partial_3(\partial')^\frac{3}{2}v\Vert_{L^2}
    \\&\indeq
    +\mathcal{P}
    \Vert \partial_{33}\partial'v\Vert_{L^2}^\frac{1}{2}
    \Vert \partial'v\Vert_{L^2}^\frac{1}{2}
    .
    \end{split}
    \label{EQ73}
\end{align}
\eold
Hence, by Lemma~\ref{L02}, we have $b_{31},b_{32}\to0$ as $t\to 0$. 
Combining this with the interpolation inequality and the two-dimensional Sobolev inequality, we have
\begin{align}
    \begin{split}
    J_8^{(2)}
    &\lesssim
    \PPs \left(
    \Vert \partial'v\Vert_{L^2}^\frac{1}{2}
    \Vert \partial''v\Vert_{L^2}^\frac{1}{2}
    +
    \Vert \partial_{3}(\partial')^\frac{1}{2}v\Vert_{L^2}
    \right)
    \Vert \partial_{33}\partial'v\Vert_{L^2}^\frac{1}{2}
    \Vert v\Vert_{L^2}^\frac{1}{2}
    \\&\indeq
    +\mathcal{P}
    \left(
    \Vert \partial_{33}v\Vert_{L^2}^\frac{1}{2}
    \Vert v\Vert_{L^2}^\frac{1}{4}
    \Vert\partial_{33}v\Vert_{L^2}^\frac{1}{4}
    +
    \Vert \partial_{33}\partial'v\Vert_{L^2}^\frac{1}{2}
    \Vert v\Vert_{L^2}^\frac{1}{2}
    \right)
    \\&\indeq
    +
    \left(
    \mathcal{P}
    \Vert \partial''v\Vert_{L^2}
    +
    \PPs
    \Vert \partial_{3}\partial'v\Vert_{L^2}
    \right)
    \Vert \partial_3(\partial')^\frac{1}{2}v\Vert_{L^2}^\frac{1}{2}
    \Vert \partial_3(\partial')^\frac{3}{2}v\Vert_{L^2}^\frac{1}{2}
    \\&\indeq
    +
    \left(
    \mathcal{P}
    \Vert \partial'v\Vert_{L^2}^\frac{1}{2}
    \Vert \partial''v\Vert_{L^2}^\frac{1}{2}
    +
    \PPs
    \Vert \partial_{3}(\partial')^\frac{1}{2}v\Vert_{L^2}
    \right)
    \Vert \partial_{33}\partial'v\Vert_{L^2}^\frac{1}{2}
    \Vert \partial''v\Vert_{L^2}^\frac{1}{2}
    \\&\indeq
    +\mathcal{P}
    \Vert \partial_{33}\partial'v\Vert_{L^2}^\frac{1}{2}
    \Vert \partial'v\Vert_{L^2}^\frac{1}{2}
    .
    \end{split}
    \llabel{EQ73b}
\end{align}
An integration by parts argument gives the inequality 
\begin{align}
    \begin{split}
        \Vert\partial_3(\partial')^{s}v\Vert_{L^2}
        \lesssim
        \Vert \partial_{33}\partial'v\Vert^\frac{1}{2}
        \Vert (\partial')^{2s-1}v\Vert^\frac{1}{2}
        ,
        \llabel{EQ150}
    \end{split}
\end{align}
where $s\in\left[\frac{1}{2},\frac{3}{2}\right]$
. Employing this in the estimate, we obtain
\begin{align}
    \begin{split}
    J_8^{(2)}
    &\lesssim
    \PPs \left(
    \Vert \partial'v\Vert_{L^2}^\frac{1}{2}
    \Vert \partial''v\Vert_{L^2}^\frac{1}{2}
    +
    \Vert \partial_{33}\partial'v\Vert_{L^2}^\frac{1}{2}
    \Vert v\Vert_{L^2}^\frac{1}{2}
    \right)
    \Vert \partial_{33}\partial'v\Vert_{L^2}^\frac{1}{2}
    \Vert v\Vert_{L^2}^\frac{1}{2}
    \\&\indeq
    +\mathcal{P}
    \left(
    \Vert \partial_{33}v\Vert_{L^2}^\frac{1}{2}
    \Vert v\Vert_{L^2}^\frac{1}{4}
    \Vert\partial_{33}v\Vert_{L^2}^\frac{1}{4}
    +
    \Vert \partial_{33}\partial'v\Vert_{L^2}^\frac{1}{2}
    \Vert v\Vert_{L^2}^\frac{1}{2}
    \right)
    \\&\indeq
    +
    \left(
    \mathcal{P}
    \Vert \partial''v\Vert_{L^2}
    +
    \PPs
    \Vert \partial_{33}\partial'v\Vert_{L^2}^\frac{1}{2}
    \Vert \partial'' v\Vert_{L^2}^\frac{1}{2}
    \right)
    \left(
    \Vert \partial_{33}\partial'v\Vert_{L^2}^\frac{1}{2}
    \Vert v\Vert_{L^2}^\frac{1}{2}
    \right)^\frac{1}{2}
    \left(
    \Vert \partial_{33}\partial'v\Vert_{L^2}^\frac{1}{2}
    \Vert \partial''v\Vert_{L^2}^\frac{1}{2}
    \right)^\frac{1}{2}
    \\&\indeq
    +
    \left(
    \mathcal{P}
    \Vert \partial'v\Vert_{L^2}^\frac{1}{2}
    \Vert \partial''v\Vert_{L^2}^\frac{1}{2}
    +
    \PPs
    \Vert \partial_{33}\partial'v\Vert_{L^2}^\frac{1}{2}
    \Vert v\Vert_{L^2}^\frac{1}{2}
    \right)
    \Vert \partial_{33}\partial'v\Vert_{L^2}^\frac{1}{2}
    \Vert \partial''v\Vert_{L^2}^\frac{1}{2}
    \\&\indeq
    +\mathcal{P}
    \Vert \partial_{33}\partial'v\Vert_{L^2}^\frac{1}{2}
    \Vert \partial'v\Vert_{L^2}^\frac{1}{2}
    .
    \end{split}
   \llabel{EQ115}
\end{align}
\eold
After combining some terms and applying Young’s inequality, we obtain
\begin{align}
    \begin{split}
        J_8^{(2)}\lesssim 
        (\epsilon+\PPs)
        (\Vert\partial_{33}v\Vert_{L^2}+\Vert\partial_{33}\partial'v\Vert_{L^2})
        +\epsilon\Vert\partial'''v\Vert_{L^2}
        +\mathcal{P}
        .
        \label{EQ73c}
    \end{split}
\end{align}
By employing the same approach to $J^{(2)}_7$, we derive 
\begin{align}
    \begin{split}
    J_{7}^{(2)}&= \Vert\partial' (v_\gamma a_{j\gamma}\partial_jv_\alpha)\Vert_{L^2}
    \\&\lesssim
    \Vert a_{j\gamma}\Vert_{L^\infty}
    \left\Vert\Vert \partial'v\Vert_{L^\infty_z}\right\Vert_{L^4_\HH}
    \left(
    \left\Vert\Vert \partial'v\Vert_{L^2_z}\right\Vert_{L^4_\HH}
    +
    \left\Vert\Vert \partial_3v\Vert_{L^2_z}\right\Vert_{L^4_\HH}
    \right)
    \\&\indeq
    +
    \Vert a_{j\gamma}\Vert_{L^\infty}
    \left\Vert\Vert v\Vert_{L^\infty_z}\right\Vert_{L^4_\HH}
    \left(
    \left\Vert\Vert \partial''v\Vert_{L^2_z}\right\Vert_{L^4_\HH}
    +
    \left\Vert\Vert \partial'\partial_3v\Vert_{L^2_z}\right\Vert_{L^4_\HH}
    \right)
    \\&\indeq
    +\Vert \partial'a_{j\gamma}\Vert_{L^\infty}
    \left\Vert\Vert v\Vert_{L^\infty_z}\right\Vert_{L^4_\HH}
    \left(\left\Vert\Vert \partial'v\Vert_{L^2_z}\right\Vert_{L^4_\HH}
    +
    \left\Vert\Vert \partial_3v\Vert_{L^2_z}\right\Vert_{L^4_\HH}
    \right)
    .
    \llabel{EQ73d}
    \end{split}
\end{align}
At this point, we invoke Lemma~\ref{L04} and deduce
\begin{align}
    \begin{split}
    J_{7}^{(2)}
    &\lec
    \PPs
    \Vert \partial'v\Vert_{L^2}^\frac{1}{4}
    \Vert \partial''v\Vert_{L^2}^\frac{1}{4}
    \Vert \partial_3\partial'v\Vert_{L^2}^\frac{1}{4}
    \Vert \partial_3\partial''v\Vert_{L^2}^\frac{1}{4}
    \left(\Vert \partial'v\Vert_{L^2}^\frac{1}{2}
    \Vert \partial''v\Vert_{L^2}^\frac{1}{2}
    +\Vert\partial_3(\partial')^\frac{1}{2}v\Vert_{L^2}
    \right)
    \\&\indeq
    +\PPs
    \Vert v\Vert_{L^2}^\frac{1}{4}
    \Vert \partial'v\Vert_{L^2}^\frac{1}{4}
    \Vert \partial_3v\Vert_{L^2}^\frac{1}{4}
    \Vert \partial_3\partial'v\Vert_{L^2}^\frac{1}{4}
    \left(\Vert \partial''v\Vert_{L^2}^\frac{1}{2}
    \Vert \partial'''v\Vert_{L^2}^\frac{1}{2}
    +\Vert\partial_3(\partial')^\frac{3}{2}v\Vert_{L^2}
    \right)
    \\&\indeq
    +\PPs
    \Vert v\Vert_{L^2}^\frac{1}{4}
    \Vert \partial'v\Vert_{L^2}^\frac{1}{4}
    \Vert \partial_3v\Vert_{L^2}^\frac{1}{4}
    \Vert \partial_3\partial'v\Vert_{L^2}^\frac{1}{4}
    \left(\Vert \partial'v\Vert_{L^2}^\frac{1}{2}
    \Vert \partial''v\Vert_{L^2}^\frac{1}{2}
    +\Vert\partial_3(\partial')^\frac{1}{2}v\Vert_{L^2}
    \right)
    .
   \label{EQ117}
    \end{split}
  \end{align}
According to Lemma~\ref{L03}, we obtain 
\begin{align}
    \begin{split}
    J_{7}^{(2)}
    &\lesssim
    \PPs
    \Vert\partial'v\Vert_{L^2}^\frac{1}{4}
    \Vert \partial'' v\Vert_{L^2}^\frac{1}{4}
    \Vert \partial_{33}\partial'v\Vert_{L^2}^\frac{1}{8}
    \Vert \partial'v\Vert_{L^2}^\frac{1}{8}
    \Vert\partial_3\partial''v\Vert_{L^2}^\frac{1}{4}
    \left(\Vert \partial'v\Vert_{L^2}^\frac{1}{2}
    \Vert \partial''v\Vert_{L^2}^\frac{1}{2}
    +\Vert\partial_{33}\partial'v\Vert_{L^2}^\frac{1}{2}
    \Vert v\Vert_{L^2}^\frac{1}{2}
    \right)
    \\&\indeq
    +\PPs
    \Vert v\Vert_{L^2}^\frac{1}{4}
    \Vert \partial'v\Vert_{L^2}^\frac{1}{4}
    \Vert \partial_{33}v\Vert_{L^2}^\frac{1}{8}
    \Vert v\Vert_{L^2}^\frac{1}{8}
    \Vert \partial_{33}\partial'v\Vert_{L^2}^\frac{1}{8}
    \Vert \partial'v\Vert_{L^2}^\frac{1}{8}
    \left(\Vert \partial''v\Vert_{L^2}^\frac{1}{2}
    \Vert \partial'''v\Vert_{L^2}^\frac{1}{2}
    +\Vert\partial_{33}\partial'v\Vert_{L^2}^\frac{1}{2}
    \Vert \partial'' v\Vert_{L^2}^\frac{1}{2}
    \right)
    \\&\indeq
    +\PPs
    \Vert v\Vert_{L^2}^\frac{1}{4}
    \Vert \partial'v\Vert_{L^2}^\frac{1}{4}
    \Vert \partial_{33}v\Vert_{L^2}^\frac{1}{8}
    \Vert v\Vert_{L^2}^\frac{1}{8}
    \Vert \partial_{33}\partial'v\Vert_{L^2}^\frac{1}{8}
    \Vert \partial'v\Vert_{L^2}^\frac{1}{8}
    \left(\Vert \partial'v\Vert_{L^2}^\frac{1}{2}
    \Vert \partial''v\Vert_{L^2}^\frac{1}{2}
    +\Vert\partial_{33}\partial'v\Vert_{L^2}^\frac{1}{2}
    \Vert v\Vert_{L^2}^\frac{1}{2}
    \right)
    \\&\lesssim
    \epsilon
    \left(\Vert\partial_{33}v\Vert_{L^2}
    +\Vert\partial_{33}\partial'v\Vert_{L^2}
    +\Vert\partial'''v\Vert_{L^2}
    +\Vert\partial_3\partial''v\Vert_{L^2}
    \right)
    +\PPs
    .
        \label{EQ73e}
    \end{split}
\end{align}
By combining the estimates in \eqref{EQ72}, \eqref{EQ73c}, and
  \eqref{EQ73e}, we arrive at an
$L^2$ estimate for $\partial_{33}\partial'v$, which reads
\begin{align}
    \begin{split}
    \Vert \partial_{33}\partial'v\Vert_{L^2} 
    &\lesssim
    \Vert \partial'''v\Vert_{L^2}
    +\Vert \partial_t\partial'v\Vert_{L^2}
    +\Vert p\Vert_{H^2}
    \mathcal{P}
    \\&\indeq
    +(\PPs+\epsilon)
    (\Vert \partial_{33}v\Vert_{L^2}
    +\Vert \partial_3\partial''v\Vert_{L^2}
    +\Vert \partial_{33}\partial'v\Vert_{L^2})
    +\mathcal{P}
    .
    \end{split}
   \label{EQ74a}
\end{align}
Applying $\partial_3$ to both sides of \eqref{EQ65} and proceeding similarly, we obtain
\begin{align}
    \begin{split}
        \partial_{333}v_\alpha
        =&
        -\Delta_{\HH}\partial_3v_\alpha
        -2\partial_3(a_{3\beta}\partial_{3\beta} v_\alpha)
        \\&
        +2\partial_3(a_{31}^2+a_{32}^2+a_{33}^2-1)\partial_{33}v_\alpha
        +2(a_{31}^2+a_{32}^2+a_{33}^2-1)\partial_{333}v_\alpha
        \\&
        +\partial_t\partial_3v_\alpha
        +\partial_3(a_{kl}\partial_ka_{3l}\partial_3v_\alpha)
        +\partial_3(v_\gamma a_{j\gamma}\partial_jv_\alpha)
        \\&
        +\partial_3(a_{33}(w-\phi_t)\partial_3v_\alpha)
        +\partial_3(a_{k\alpha}\partial_kp)
    \end{split}
    \label{EQ75}
\end{align}
for $\alpha=1,2.$
Taking the $L^2$ norm on both sides of the equation \eqref{EQ75}, the following estimate holds
\begin{align}
    \Vert \partial_{333}v_\alpha\Vert_{L^2}\leq\sum_{m=1}^9 J^{(3)}_m.
    \llabel{EQ76}
\end{align}
Since $\partial_3 p=0$, we have
\begin{align}
    J_{9}^{(3)}=\partial_3(a_{k\alpha}\partial_kp)=\sum_{k=1,2}\partial_3a_{k\alpha}\partial_kp.
   \llabel{EQ133}
\end{align}
According to the definition of $a$, when $k=1,2$ one has $a_{k\alpha}\in\{0,1\}$; therefore, $J_9^{(3)}=0$.
By applying Lemma~\ref{L02}, we may derive an estimate for the linear terms,
\begin{align}
    \begin{split}
    &\sum_{m=1}^6 J^{(3)}_m+J^{(3)}_9
    \\&\lesssim
        \Vert \Delta_{\HH}\partial_3v_\alpha\Vert_{L^2}
        +\Vert\partial_3a_{3\beta}\Vert_{L^\infty}
        \Vert\partial_{3\beta} v_\alpha\Vert_{L^2}
        +\Vert a_{3\beta}\Vert_{L^\infty}
        \Vert\partial_{33\beta} v_\alpha\Vert_{L^2}
        \\&\indeq
        +\Vert\partial_3(a_{31}^2+a_{32}^2+a_{33}^2-1)\Vert_{L^\infty}
        \Vert\partial_{33}v_\alpha\Vert_{L^2}
        +\Vert a_{31}^2+a_{32}^2+a_{33}^2-1\Vert_{L^\infty}
        \Vert\partial_{333}v_\alpha\Vert_{L^2}
        \\&\indeq
        +\Vert\partial_t\partial_3v_\alpha\Vert_{L^2}
        +\Vert\partial_3(a_{kl}\partial_ka_{3l})\Vert_{L^\infty}
        \Vert\partial_3v_\alpha\Vert_{L^2}
        +\Vert a_{kl}\partial_ka_{3l}\Vert_{L^\infty}
        \Vert\partial_{33}v_\alpha\Vert_{L^2}
    \\&\lesssim \PPs(\Vert\partial_{33}v\Vert_{L^2}
    +\Vert\partial_{33}\partial'v\Vert_{L^2}
    +\Vert\partial_{333}v\Vert_{L^2})
    +\Vert \partial_3\partial''v\Vert_{L^2}
    +\Vert\partial_t\partial_3v\Vert_{L^2}
    .
    \end{split}
    \label{EQ77}
\end{align}
Using the incompressibility condition~\eqref{EQ19} once more, we deduce the estimate for the nonlinear term,
\begin{align}
    \begin{split}
    J_8^{(3)}
    &=\Vert \partial_{3}(a_{33}(w-\phi_t)\partial_3v_\alpha)\Vert_{L^2}
    \\&
    =
    \Vert \partial_3a_{33}\Vert_{L^\infty}
    \left\Vert \Vert w\Vert_{L^\infty_z}\right\Vert_{L^4_{\HH}}
    \left\Vert \Vert \partial_3v\Vert_{L^2_z}\right\Vert_{L^4_{\HH}}
    +\Vert \partial_3a_{33}\Vert_{L^\infty}
    \Vert \phi_t\Vert_{L^6}
    \Vert \partial_3v\Vert_{L^3}
    +\Vert a_{33}\Vert_{L^\infty}
    \Vert \partial_3\phi_t\Vert_{L^6}
    \Vert \partial_3v\Vert_{L^3}
    \\&\indeq\indeq
    +\Vert a_{33}\Vert_{L^\infty}
    \left(
    \left\Vert \Vert \partial_3w\Vert_{L^\infty_z}\right\Vert_{L^4_{\HH}}
    \left\Vert \Vert \partial_3v\Vert_{L^2_z}\right\Vert_{L^4_{\HH}}
    +\left\Vert \Vert w\Vert_{L^\infty_z}\right\Vert_{L^4_{\HH}}
    \left\Vert \Vert \partial_{33}v\Vert_{L^2_z}\right\Vert_{L^4_{\HH}}
    \right)
    \\&\indeq\indeq
    +\Vert a_{33}\Vert_{L^\infty}
    \Vert \phi_t\Vert_{L^\infty}
    \Vert \partial_{33}v\Vert_{L^2}
    ,
    \end{split}
   \llabel{EQ122}
\end{align}
from where
\begin{align}
    \begin{split}
    J_8^{(3)}
    &\lesssim
    \PPs \left\Vert
    \Vert b_{11}\partial_1 v_1
    +b_{22}\partial_2 v_2
    -b_{31}\partial_3 v_1
    -b_{32}\partial_3 v_2
    \Vert_{L^2_z}
    \right\Vert_{L_\HH^4}
    \Vert \partial_3(\partial')^\frac{1}{2}v\Vert_{L^2}
    \\&\indeq
    +\mathcal{P}
    (
    \Vert v\Vert_{L^2}^\frac{1}{2}
    \Vert \partial_{333}v\Vert_{L^2}^\frac{1}{2}
    +
    \Vert \partial_{3}(\partial')^\frac{1}{2}v\Vert_{L^2}
    )
    \\&\indeq
    +\mathcal{P}\left\Vert
    \Vert 
    b_{11}\partial_1 v_1
    +b_{22}\partial_2 v_2
    -b_{31}\partial_3 v_1
    -b_{32}\partial_3 v_2
    \Vert_{L^2_z}
    \right\Vert_{L_\HH^4}
    \Vert \partial_3(\partial')^\frac{1}{2}v\Vert_{L^2}
    \\&\indeq
    +\mathcal{P}
    \left\Vert
    \Vert b_{11}\partial_1 v_1
    +b_{22}\partial_2 v_2
    -b_{31}\partial_3 v_1
    -b_{32}\partial_3 v_2
    \Vert_{L^2_z}
    \right\Vert_{L_\HH^4}
    \Vert \partial_{33}(\partial')^\frac{1}{2}v\Vert_{L^2}
    \\&\indeq
    +\mathcal{P}
    \Vert \partial_{333}v\Vert_{L^2}^\frac{2}{3}
    \Vert v\Vert_{L^2}^\frac{1}{3}
    .
    \end{split}
    \llabel{EQ78}
\end{align}
\eold
Applying Lemma~\ref{L03}, we obtain
\begin{align}
    \begin{split}
    J_8^{(3)}
    &\lesssim
    \PPs \left(
    \Vert \partial'v\Vert_{L^2}^\frac{1}{2}
    \Vert \partial''v\Vert_{L^2}^\frac{1}{2}
    +
    \Vert \partial_{3}(\partial')^\frac{1}{2}v\Vert_{L^2}
    \right)
    \Vert \partial_{333}v\Vert_{L^2}^\frac{1}{3}
    \Vert (\partial')^\frac{3}{4}v\Vert_{L^2}^\frac{2}{3}
    \\&\indeq
    +\mathcal{P}
    \left(
    \Vert \partial_{333}v\Vert_{L^2}^\frac{1}{2}
    \Vert v\Vert_{L^2}^\frac{1}{2}
    +
    \Vert \partial_{333}v\Vert_{L^2}^\frac{1}{3}
    \Vert (\partial')^\frac{3}{4}v\Vert_{L^2}^\frac{2}{3}
    \right)
    \\&\indeq
    +
    \left(
    \mathcal{P}
    \Vert \partial'v\Vert_{L^2}^\frac{1}{2}
    \Vert \partial''v\Vert_{L^2}^\frac{1}{2}
    +
    \PPs
    \Vert \partial_{3}(\partial')^\frac{1}{2}v\Vert_{L^2}
    \right)
    \Vert \partial_{333}v\Vert_{L^2}^\frac{1}{3}
    \Vert (\partial')^\frac{3}{4}v\Vert_{L^2}^\frac{2}{3}
    \\&\indeq
    +
    \left(
    \mathcal{P}
    \Vert \partial'v\Vert_{L^2}^\frac{1}{2}
    \Vert \partial''v\Vert_{L^2}^\frac{1}{2}
    +
    \PPs
    \Vert \partial_{3}(\partial')^\frac{1}{2}v\Vert_{L^2}
    \right)
    \Vert \partial_{333}v\Vert_{L^2}^\frac{2}{3}
    \Vert (\partial')^\frac{3}{2}v\Vert_{L^2}^\frac{1}{3}
    \\&\indeq
    +\mathcal{P}
    \Vert \partial_{333}v\Vert_{L^2}^\frac{2}{3}
    \Vert v\Vert_{L^2}^\frac{1}{3}
    .
    \end{split}
    \label{EQ78b}
\end{align}
At this stage, invoking Lemma~\ref{L03}, we deduce
\begin{align}
    \begin{split}
    J_8^{(3)}
    &\lesssim
    \PPs \left(
    \Vert \partial'v\Vert_{L^2}^\frac{1}{2}
    \Vert \partial''v\Vert_{L^2}^\frac{1}{2}
    +
    \Vert \partial_{333}v\Vert_{L^2}^\frac{1}{3}
    \Vert (\partial')^\frac{3}{4}v\Vert_{L^2}^\frac{2}{3}
    \right)
    \Vert \partial_{333}v\Vert_{L^2}^\frac{1}{3}
    \Vert (\partial')^\frac{3}{4}v\Vert_{L^2}^\frac{2}{3}
    \\&\indeq
    +\mathcal{P}
    \left(
    \Vert \partial_{333}v\Vert_{L^2}^\frac{1}{2}
    \Vert v\Vert_{L^2}^\frac{1}{2}
    +
    \Vert \partial_{333}v\Vert_{L^2}^\frac{1}{3}
    \Vert (\partial')^\frac{3}{4}v\Vert_{L^2}^\frac{2}{3}
    \right)
    \\&\indeq
    +
    \left(
    \mathcal{P}
    \Vert \partial'v\Vert_{L^2}^\frac{1}{2}
    \Vert \partial''v\Vert_{L^2}^\frac{1}{2}
    +
    \PPs
    \Vert \partial_{333}v\Vert_{L^2}^\frac{1}{3}
    \Vert (\partial')^\frac{3}{4}v\Vert_{L^2}^\frac{2}{3}
    \right)
    \Vert \partial_{333}v\Vert_{L^2}^\frac{1}{3}
    \Vert (\partial')^\frac{3}{4}v\Vert_{L^2}^\frac{2}{3}
    \\&\indeq
    +
    \left(
    \mathcal{P}
    \Vert \partial'v\Vert_{L^2}^\frac{1}{2}
    \Vert \partial''v\Vert_{L^2}^\frac{1}{2}
    +
    \PPs
    \Vert \partial_{333}v\Vert_{L^2}^\frac{1}{3}
    \Vert (\partial')^\frac{3}{4}v\Vert_{L^2}^\frac{2}{3}
    \right)
    \Vert \partial_{333}v\Vert_{L^2}^\frac{2}{3}
    \Vert (\partial')^\frac{3}{2}v\Vert_{L^2}^\frac{1}{3}
    \\&\indeq
    +\mathcal{P}
    \Vert \partial_{333}v\Vert_{L^2}^\frac{2}{3}
    \Vert v\Vert_{L^2}^\frac{1}{3}
    .
    \end{split}
   \llabel{EQ118}
\end{align}
\eold
Collecting like terms and then applying Young’s inequality, we arrive at
\begin{align}
    J_8^{(3)}\lesssim \epsilon\Vert\partial_{333}v\Vert_{L^2}+\mathcal{P}.
    \llabel{EQ78c}
\end{align}
Similarly, we obtain an estimate for $J^{(3)}_7$, which is similar to that for~$J^{(3)}_8$. Thus,
\begin{align}
    \begin{split}
    J_{7}^{(3)}&= \Vert\partial_3 (v_\gamma a_{j\gamma}\partial_jv_\alpha)\Vert_{L^2}
    \\&\lesssim
    \sum_{j=1,2}
    \Vert a_{j\gamma}\Vert_{L^\infty}
    \left\Vert\Vert \partial_3v\Vert_{L^\infty_z}\right\Vert_{L^4_\HH}
    \left\Vert\Vert \partial'v\Vert_{L^2_z}\right\Vert_{L^4_\HH}
    +
    \Vert a_{3\gamma}\Vert_{L^\infty}
    \left\Vert\Vert \partial_3v\Vert_{L^\infty_z}\right\Vert_{L^4_\HH}
    \left\Vert\Vert \partial_3v\Vert_{L^2_z}\right\Vert_{L^4_\HH}
    \\&\indeq
    +
    \Vert a_{j\gamma}\Vert_{L^\infty}
    \left\Vert\Vert v\Vert_{L^\infty_z}\right\Vert_{L^4_\HH}
    \left(
    \left\Vert\Vert \partial_3\partial'v\Vert_{L^2_z}\right\Vert_{L^4_\HH}
    +
    \left\Vert\Vert \partial_{33}v\Vert_{L^2_z}\right\Vert_{L^4_\HH}
    \right)
    \\&\indeq
    +\Vert \partial_3a_{j\gamma}\Vert_{L^\infty}
    \left\Vert\Vert v\Vert_{L^\infty_z}\right\Vert_{L^4_\HH}
    \left(\left\Vert\Vert \partial'v\Vert_{L^2_z}\right\Vert_{L^4_\HH}
    +
    \left\Vert\Vert \partial_3v\Vert_{L^2_z}\right\Vert_{L^4_\HH}
    \right)
    .
    \end{split}
    \llabel{EQ78d}
\end{align}
Similarly to \eqref{EQ117}, Lemma~\ref{L04} yields 
\begin{align}
    \begin{split}
    J_{7}^{(3)}
    &\lec
    \mathcal{P}
    \Vert \partial_3(\partial')^\frac{1}{2}v\Vert_{L^2}^\frac{1}{2}
    \Vert \partial_{33}(\partial')^\frac{1}{2}v\Vert_{L^2}^\frac{1}{2}
    \Vert \partial'v\Vert_{L^2}^\frac{1}{2}
    \Vert \partial''v\Vert_{L^2}^\frac{1}{2}
    \\&\indeq
    +\PPs
    \Vert \partial_3(\partial')^\frac{1}{2}v\Vert_{L^2}^\frac{1}{2}
    \Vert \partial_{33}(\partial')^\frac{1}{2}v\Vert_{L^2}^\frac{1}{2}
    \Vert\partial_3(\partial')^\frac{1}{2}v\Vert_{L^2}
    \\&\indeq
    +\PPs
    \Vert (\partial')^\frac{1}{2}v\Vert_{L^2}^\frac{1}{2}
    \Vert \partial_{3}(\partial')^\frac{1}{2}v\Vert_{L^2}^\frac{1}{2}
    \left(\Vert \partial_3(\partial')^\frac{3}{2}v\Vert_{L^2}
    +\Vert\partial_{33}(\partial')^\frac{1}{2}v\Vert_{L^2}
    \right)
    \\&\indeq
    +\PPs
    \Vert (\partial')^\frac{1}{2}v\Vert_{L^2}^\frac{1}{2}
    \Vert \partial_{3}(\partial')^\frac{1}{2}v\Vert_{L^2}^\frac{1}{2}
    \left(\Vert \partial'v\Vert_{L^2}^\frac{1}{2}
    \Vert \partial''v\Vert_{L^2}^\frac{1}{2}
    +\Vert\partial_3(\partial')^\frac{1}{2}v\Vert_{L^2}
    \right)
   .
   \llabel{EQ119}
    \end{split}
\end{align}
\eold
Again, we use Lemma~\ref{L03} to deduce
\begin{align}
    \begin{split}
       J_{7}^{(3)}
    &\lesssim
    \mathcal{P}
    \Vert \partial_{333}v\Vert_{L^2}^\frac{1}{6}
    \Vert (\partial')^\frac{3}{4}v\Vert_{L^2}^\frac{1}{3}
    \Vert \partial_{333}v\Vert_{L^2}^\frac{1}{3}
    \Vert (\partial')^\frac{3}{2}v\Vert_{L^2}^\frac{1}{6}
    \Vert \partial'v\Vert_{L^2}^\frac{1}{2}
    \Vert \partial''v\Vert_{L^2}^\frac{1}{2}
    \\&\indeq
    +\PPs
    \Vert \partial_{333}v\Vert_{L^2}^\frac{1}{6}
    \Vert (\partial')^\frac{3}{2}v\Vert_{L^2}^\frac{1}{3}
    \Vert \partial_{333}v\Vert_{L^2}^\frac{1}{3}
    \Vert \partial'''v\Vert_{L^2}^\frac{1}{6}
    \Vert\partial_{333}v\Vert_{L^2}^\frac{1}{3}
    \Vert (\partial')^\frac{3}{4}v\Vert_{L^2}^\frac{2}{3}
    \\&\indeq
    +\mathcal{P}
    \Vert v\Vert_{L^2}^\frac{1}{4}
    \Vert \partial' v\Vert_{L^2}^\frac{1}{4}
    \Vert \partial_{333}v\Vert_{L^2}^\frac{1}{6}
    \Vert (\partial')^\frac{3}{4}v\Vert_{L^2}^\frac{1}{3}
    \left(\Vert \partial_{33}\partial'v\Vert_{L^2}^\frac{1}{2}
    \Vert \partial''v\Vert_{L^2}^\frac{1}{2}
    +\Vert\partial_{333}v\Vert_{L^2}^\frac{2}{3}
    \Vert (\partial')^\frac{3}{2} v\Vert_{L^2}^\frac{1}{3}
    \right)
    \\&\indeq
    +\PPs
    \Vert v\Vert_{L^2}^\frac{1}{4}
    \Vert \partial' v\Vert_{L^2}^\frac{1}{4}
    \Vert \partial_{333}v\Vert_{L^2}^\frac{1}{6}
    \Vert (\partial')^\frac{3}{2}v\Vert_{L^2}^\frac{1}{3}
    \left(\Vert \partial'v\Vert_{L^2}^\frac{1}{2}
    \Vert \partial''v\Vert_{L^2}^\frac{1}{2}
    +\Vert\partial_{333}v\Vert_{L^2}^\frac{1}{3}
    \Vert (\partial')^\frac{3}{4}v\Vert_{L^2}^\frac{2}{3}
    \right)
    \\&\lesssim
    (\epsilon+\PPs)
    \left(\Vert\partial_{333}v\Vert_{L^2}
    \right)
    +\epsilon \Vert\partial_{33}\partial'v\Vert_{L^2}
    +\PPs\Vert\partial'''v\Vert_{L^2}
    +\PPs
    .
        \llabel{EQ78e}
    \end{split}
\end{align}
Combining this with \eqref{EQ77} and \eqref{EQ78b}, we arrive at
\begin{align}
    \begin{split}
    \Vert \partial_{333}v\Vert_{L^2}
    \lesssim&
    (\PPs+\epsilon)
    (\Vert\partial_{33}v\Vert_{L^2}
    +\Vert\partial_{33}\partial'v\Vert_{L^2}
    +\Vert\partial_{333}v\Vert_{L^2})
    \\&\indeq
    +\Vert \partial_3\partial''v\Vert_{L^2}
    +\Vert\partial_t\partial_3v\Vert_{L^2}
    +\PPs\Vert\partial'''v\Vert_{L^2}
    +\mathcal{P}
    ,
    \end{split}
    \label{EQ79a}
\end{align}
completing the estimate for $\Vert \partial_{333}v\Vert_{L^2}$.
\end{proof}

\startnewsection{Pressure estimates}{sec15}
The purpose of this section is to obtain a pressure estimate,
stated in the following lemma.

\cole
\begin{Lemma}
\label{L07}
Under the assumptions of Theorem~\ref{T01}
and with $\epsilon\in(0,1/2]$,
we have
the pressure estimate
\begin{align}
    \Vert p\Vert_{H^2({\mathbb{T}^2})}
    \lesssim  
    \Vert v\Vert_{H^3} (\PPs+\epsilon)
    +\mathcal{P}
    .
    \label{EQ97}
\end{align}
\end{Lemma}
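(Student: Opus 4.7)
The plan is to reduce the three-dimensional elliptic equation \eqref{EQ87} for the $z$-independent pressure $p$ to a two-dimensional elliptic equation on $\mathbb{T}^2$. Multiplying \eqref{EQ87} by $J=\partial_3\phi$ and integrating in $z\in[0,1]$, and using $\int_0^1 J\,dz=h$, produces $h\,\Delta_{\HH}p$ on the left; on the right, every total $\partial_3$-derivative term collapses to a boundary trace. Invoking the kinematic condition $w|_{z=1}=h_t$, the no-slip conditions $v|_{z=0,1}=0$, and $\phi_t|_{z=0}=0$, this reduction yields an identity of the form
\begin{equation*}
h\,\Delta_{\HH}p = h_{tt} - (a_{33}\phi_t\partial_3 w)|_{z=1} - \Delta_a w|_{z=1} + \Delta_a w|_{z=0} + R,
\end{equation*}
where $R$ collects the bulk integrals $-\int J\,\nablaah v:\nabla^T_{a,\HH}v\,dz$, $\int w\,\partial_3(a_{33}\partial_3 w)\,dz$, and $-2\int \nablaah w\cdot\partial_3 v\,dz$.

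Next, substituting the plate equation $h_{tt}=p-\Delta_{\HH}^2 h$ and rearranging produces
\begin{equation*}
\Delta_{\HH}p - p = -\Delta_{\HH}^2 h - (h-1)\Delta_{\HH}p - (a_{33}\phi_t\partial_3 w)|_{z=1} - \Delta_a w|_{z=1} + \Delta_a w|_{z=0} + R.
\end{equation*}
By Lemma~\ref{L02}, the factor $h-1$ has small $L^\infty$ norm, so the $(h-1)\Delta_{\HH}p$ term is absorbed on the left after taking the $L^2(\mathbb{T}^2)$ norm. Two-dimensional elliptic regularity on the torus then gives $\|p\|_{H^2(\mathbb{T}^2)}\lesssim \|\Delta_{\HH}p\|_{L^2(\mathbb{T}^2)}+|\bar p|$. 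Integrating the plate equation over $\mathbb{T}^2$ shows $\bar p=\bar h_{tt}$, so $|\bar p|\lesssim \|h_{tt}\|_{L^2}\lesssim \mathcal{P}$, while $\|\Delta_{\HH}^2 h\|_{L^2}\lesssim \|h\|_{H^4}\lesssim\mathcal{P}$.

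The dominant high-order contribution is the boundary trace $\Delta_a w|_{z=1}$. Writing $\Delta_a w=\Delta w+(\Delta_a-\Delta)w$, the perturbation $(\Delta_a-\Delta)w$ has coefficient factors of order $\|a-I\|_{L^\infty}\lesssim\epsilon$ by Lemma~\ref{L02}, so a trace estimate combined with the divergence-free relation $\partial_3 w=J\,\divah v$ produces a contribution $\lesssim\epsilon\,\|w\|_{H^3}\lesssim\epsilon\,\|v\|_{H^3}$. For the principal part $\Delta w|_{z=1}$, one exploits $v|_{\Gamma_1}=0$ to convert horizontal traces of $w$ into horizontal derivatives of $h_t$, which are absorbed into $\mathcal{P}$, and uses $\partial_3 w=J\,\divah v$ to express $\partial_{33}w$ in terms of $\partial_3 v$ with small prefactors coming from $J-1$, $b_{3\alpha}$, and $\partial_3 J$. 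The lower boundary term $\Delta_a w|_{z=0}$ is handled analogously: $w|_{z=0}=0$ forces its horizontal components to vanish, and the remaining vertical piece carries a small $\PPs$-prefactor from $a_{3\alpha}|_{z=0}$. The bulk integrals in $R$ are quadratic in $v$ and $w$; by the anisotropic inequalities of Lemma~\ref{L04} together with Lemma~\ref{L02}, each contributes either a term in $\mathcal{P}$ or a term with a small prefactor from $a-I$, $J-1$, or $\phi_t$.

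The main obstacle, as foreshadowed in the introduction, is precisely the boundary contribution $\Delta_a w|_{z=1}$: a naive trace estimate demands $w\in H^{5/2+}$, i.e., $v\in H^{5/2+}$, which is one full derivative beyond the a priori $H^2$ control. The resolution is not to estimate $\Delta_a w|_{z=1}$ in isolation, but to systematically decompose it using the ALE structure and the divergence constraint, so that every occurrence of $\|v\|_{H^3}$ is paired with a factor that is small either through Lemma~\ref{L02} (closeness of $a$, $b$, $J$ to the identity for short time) or through $\PPs$ (vanishing of $\phi_t$ and of the short-time boundary data). Careful bookkeeping of these smallness factors across the decomposition of all traces and bulk terms then yields the desired prefactor $\PPs+\epsilon$ in front of $\|v\|_{H^3}$ and establishes \eqref{EQ97}.
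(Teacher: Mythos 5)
Your overall route is the paper's: multiply \eqref{EQ87} by $J$, integrate in $x_3$ to obtain \eqref{EQ88}--\eqref{EQ89}, insert the plate and kinematic conditions to get the $(I-\Delta_{\HH})p$ identity, absorb $(h-1)\Delta_{\HH}p$ by Lemma~\ref{L02}, and estimate the boundary traces and bulk integrals through the divergence relation, trace estimates, and Lemmas~\ref{L02}--\ref{L04}. However, two of your smallness claims fail as stated, and they concern exactly the terms that make this lemma delicate. First, $\Vert w\Vert_{H^3}\lesssim\Vert v\Vert_{H^3}$ is false: since $w=\int_0^z J\,\divah v\,dz'$, the purely horizontal part of $\Vert w\Vert_{H^3}$ (e.g. $\partial'''w$) requires four derivatives of $v$; this is precisely the one-derivative loss the paper is organized around. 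Your step survives only because every term of $(\Delta_a-\Delta)w|_{z=1}$ carries at least one $\partial_3$ on $w$, so substituting $\partial_3w=J\,\divah v$ \emph{before} taking traces caps the count at $\Vert v\Vert_{H^{2.5+}}\lesssim\Vert v\Vert_{H^3}$; the argument should be phrased that way, not through $\Vert w\Vert_{H^3}$. Second, the assertion that the remaining contributions of $\partial_{33}w|_{z=1}$ carry small prefactors from $J-1$, $b_{3\alpha}$, $\partial_3 J$, or $\phi_t$ is not correct: $\partial_{33}w=\partial_3(J\,\divah v)$ contains $J\,\divh\partial_3 v$, whose coefficient is $\approx 1$ and whose trace is not in $\mathcal{P}$ and has no $\PPs$ factor. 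That piece can only be handled by a trace/interpolation argument (Lemma~\ref{L03}) followed by Young's inequality, which is what produces the $\epsilon\Vert v\Vert_{H^3}$ contribution in \eqref{EQ97}; your bookkeeping, which attributes all smallness to Lemma~\ref{L02} or $\PPs$, does not cover it.

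Relatedly, you skip the one structural device of the paper's proof: the integration by parts \eqref{EQ90}, which rewrites $\int_0^1 w\,\partial_3\bigl(\tfrac{1}{\partial_3\phi}\partial_3w\bigr)\,dx_3$ as $-\int_0^1\tfrac{1}{\partial_3\phi}(\partial_3w)^2\,dx_3$ plus a boundary term that cancels $-a_{33}\phi_t\partial_3 w|_{z=1}$ in \eqref{EQ88} exactly. This does double duty: it eliminates that boundary trace and reduces the bulk term to an expression quadratic in first derivatives of $v$ (via \eqref{EQ19}), which is then estimated anisotropically as the paper's $K_4$. Keeping both terms, as you propose, forces you to estimate $w\,a_{33}\partial_{33}w$, including the $O(1)$-coefficient piece $w\,a_{33}J\,\divh\partial_3 v$, where again neither $\mathcal{P}$-membership nor coefficient smallness applies and only the interpolation-plus-Young mechanism (with the attendant loss into $\epsilon\Vert v\Vert_{H^3}$) can close it. So the architecture matches the paper, but the justification of the two highest-order contributions --- the trace of $\partial_{33}w$ at $z=1$ and the $w\,\partial_{33}w$ bulk term --- is missing as written, and \eqref{EQ90} is the paper's way of making the second one disappear altogether.
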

\colb

\begin{proof}[Proof of Lemma~\ref{L07}]
Multiplying \eqref{EQ87} by $\partial_{3} \phi$, then integrating from $0$ to $1$ in the $x_{3}$ variable and noting that  ${w}=0 $, $\phi_{t}=0$,  and $\Delta_{x_{1}, x_{2}} {w} =0$ on $x_{3}=0 $, while ${v}=0$ on $x_{3}=1$ and on $x_{3}=0$, we obtain 
\begin{align}
 h \Delta_{x_{1}, x_{2}} {p} =
 {w}_{t}\big|_{z=1}
 - a_{33} \phi_{t} \partial_{3} {w}\Big|_{z=1}   
 + F 
,
\label{EQ88}      
\end{align}
where 
  \begin{align}
   F
   = - \Delta_{a}  {w}\Big|_{ z=1} +  \Delta_{a}  {w}\Big|_{ z=0} + \int_{0}^{1}\left( -  \partial_{3}\phi \nabla_{a,\HH} {v} : \nabla^{T}_{a, \HH}  {v} 
   + {w} \partial_{3}\left(\frac{1}{\partial_{3} \phi} \partial_{3} {w}\right)   
   -  2 \nabla_{a,\HH} {w} \cdot \partial_{3}  {v} 
   \right) \, dx_{3}.
   \label{EQ89}
\end{align}
Performing integration by parts on the third term of the integral
in~\eqref{EQ89}, we get
\begin{align}
    \begin{split}
        \int_0^1w\partial_3\left(\frac{1}{\partial_3\phi}\partial_3w\right)\,dx_3
        &=
        -\int_0^1\frac{1}{\partial_3\phi}(\partial_3w)^2\,dx_3
        +\frac{1}{\partial_3\phi}w\partial_3w\Big|_{z=1}
        -\frac{1}{\partial_3\phi}w\partial_3w\Big|_{z=0}
        \\&
        =
        -\int_0^1\frac{1}{\partial_3\phi}(\partial_3w)^2\,dx_3
        +\frac{1}{\partial_3\phi}w\partial_3w\Big|_{z=1}
        \\&
        =-\int_0^1\frac{1}{\partial_3\phi}(\partial_3w)^2\,dx_3
        +a_{33}\phi_t\partial_3w\Big|_{z=1}
        ,
    \end{split}
    \label{EQ90}
\end{align}
where we have used \eqref{EQ08}, \eqref{EQ09}, and~\eqref{EQ12}.
Observing that the last term in \eqref{EQ90} cancels with the second-to-last term on the right-hand side of \eqref{EQ88}, we set
\begin{align}
    \tilde{F}=F- a_{33} \phi_{t} \partial_{3} {w}\Big|_{z=1} 
   .
    \llabel{EQ90b}
\end{align}
Using \eqref{EQ07}, together with the reductions in \eqref{EQ89} and \eqref{EQ90}, we arrive at
\begin{align}
    (I-\Delta_{\HH})p=\Delta^2_{\HH}h+(h-1)\Delta_{\HH}p-\tilde{F}.
    \llabel{EQ91}
\end{align}
Hence, in the case where $h$ is close to $1$,
\begin{align}
    \Vert p\Vert_{H^2({\mathbb{T}^2})}\lesssim  \Vert h\Vert_{H^4({\mathbb{T}^2})}+\Vert \tilde{F}\Vert_{L^2({\mathbb{T}^2})}
    .
    \label{EQ92}
\end{align}

To proceed, we expand the term $F$ as
\begin{align}
\begin{split}
    \tilde{F}=&-\Delta_aw\Big|_{z=1}+\Delta_aw\Big|_{z=0}\\
    &-\int_0^1\partial_3\phi(2(a_{j2}\partial_jv_1)(a_{j1}\partial_jv_2)+(a_{j1}\partial_jv_1)^2+(a_{j2}\partial_jv_2)^2)\,dx_3\\
    &-\int_0^1\frac{1}{\partial_3\phi}(\partial_3w)^2\,dx_3-2\int_0^1\nabla_{a,\HH}w\partial_3v\,dx_3
    .
\end{split}
\llabel{EQ93}
\end{align}
We first estimate $K_1$ as
\begin{align}
\begin{split}
    K_1=
    &(-\Delta_{\HH}h_t)
    -\sum_{i,j=1}^3
    \left(a_{ji}(\partial_ja_{3i})\sum_{k=1}^2b_{3k}\partial_3v_k\right)\Big|_{z=1}\\
    &-\left(\sum_{i,j=1}^3a_{ji}a_{3i}\partial_j(\sum_{k=1}^2b_{3k}\partial_3v_k)\right)\Big|_{z=1}
    -\left(\sum_{i=1}^2a_{3i}\partial_i(\sum_{k=1}^2b_{3k}\partial_3v_k)\right)\Big|_{z=1}
    .
    \end{split}
    \llabel{EQ94}
\end{align}
In this step, \eqref{EQ05} and the divergence-free condition \eqref{EQ01} have been applied to simplify the expression. 
The following estimate relies on the application of the trace theorem along with Lemma~\ref{L01} and~\ref{L02}
\begin{align}
\begin{split}
    \Vert K_1\Vert_{L^2_\HH}&
    \lesssim
    \Vert h_t\Vert_{H^2}
    -\sum_{i,j=1}^3
    \Vert a_{ji}(\partial_ja_{3i})\big|_{z=1}\Vert_{L^\infty}
    \sum_{k=1}^2\Vert b_{3k}\big|_{z=1}\Vert_{L^\infty}
    \Vert\partial_3v_k\big|_{z=1}\Vert_{L^2}
    \\&\indeq
    -\sum_{i,j=1}^3\Vert a_{ji}|_{z=1}\Vert_{L^\infty}
    \Vert a_{3i}|_{z=1}\Vert_{L^\infty}
    \sum_{k=1}^2\left(
    \Vert\partial_jb_{3k}|_{z=1}\Vert_{L^\infty}
    \Vert\partial_3v_k|_{z=1}\Vert_{L^2}
    +
    \Vert b_{3k}|_{z=1}\Vert_{L^\infty}
    \Vert\partial_j\partial_3v_k|_{z=1}\Vert_{L^2}
    \right)
    \\&\indeq
    -\sum_{i=1}^2
    \Vert a_{3i}|_{z=1}\Vert_{L^\infty}
    \sum_{k=1}^2\left(
    \Vert \partial_ib_{3k}|_{z=1}\Vert_{L^\infty}
    \Vert\partial_3v_k|_{z=1}\Vert_{L^2}
    +
    \Vert b_{3k}|_{z=1}\Vert_{L^\infty}
    \Vert\partial_i\partial_3v_k|_{z=1}\Vert_{L^2}
    \right)
    \\&
    \lesssim \Vert v\Vert_{H^3} \PPs +\mathcal{P}
    .
    \llabel{EQ941}
    \end{split}
\end{align}
Noting that $K_2$ has more favorable properties than 
$K_1$ due to the no-slip condition \eqref{EQ09}, a further application of \eqref{L02} yields
\begin{align}
    \begin{split}
     K_1+K_2\lesssim \Vert v\Vert_{H^3} \PPs +\mathcal{P}
     .
    \end{split}
    \label{EQ95}
\end{align}
For $K_3,\ K_4$, and $K_5$, we apply Minkowski’s inequality and Lemma~\ref{L03}. As an illustration, for $K_3$, we obtain
\begin{align}
\begin{split}
    \Vert K_3\Vert_{L^2_{\HH}}
    =&\left\Vert \int_0^1\partial_3\phi(2(a_{j2}\partial_jv_1)(a_{j1}\partial_jv_2)
    +(a_{j1}\partial_jv_1)^2
    +(a_{j2}\partial_jv_2)^2)\,dx_3\right\Vert_{L^2_{\HH}}
    \\ \lesssim& 
    \int_0^1\Vert \partial_3\phi(2(a_{j2}\partial_jv_1)(a_{j1}\partial_jv_2)+(a_{j1}\partial_jv_1)^2
    +(a_{j2}\partial_jv_2)^2)\Vert_{L^2_{\HH}}\,dx_3
    \\= & 
    \left\Vert \Vert \partial_3\phi(2(a_{j2}\partial_jv_1)(a_{j1}\partial_jv_2)
    +(a_{j1}\partial_jv_1)^2
    +(a_{j2}\partial_jv_2)^2)\Vert_{L^2_{\HH}}\right\Vert_{L^1_z}
    \\ \lesssim & 
    \Vert \partial_3\phi(2(a_{j2}\partial_jv_1)(a_{j1}\partial_jv_2)
    +(a_{j1}\partial_jv_1)^2+(a_{j2}\partial_jv_2)^2)\Vert_{L^2}
    ,
    \end{split}
    \llabel{EQ96a}
\end{align}
from where
\begin{align}
    \begin{split}
      \Vert K_3\Vert_{L^2_{\HH}}
     \lesssim & 
    \PPs
    \left\Vert \Vert \partial_3v\Vert_{L^\infty_z}\right\Vert_{L^2_{\HH}}
    \left\Vert \Vert \partial_3 v\Vert_{L^2_z}\right\Vert_{L^\infty_{\HH}}
    +\PPs
    \left\Vert \Vert \partial_3v\Vert_{L^2_z}\right\Vert_{L^\infty_{\HH}}
    \left\Vert \Vert \partial'v\Vert_{L^\infty_z}\right\Vert_{L^2_{\HH}}
    \\&\indeq
    +\mathcal{P}
    \left\Vert \Vert \partial'v\Vert_{L^\infty_z}\right\Vert_{L^2_{\HH}}
    \left\Vert \Vert \partial' v\Vert_{L^2_z}\right\Vert_{L^\infty_{\HH}}
    \\\lesssim&
    \PPs \Vert\partial_3v\Vert^\frac{1}{2}_{L^2}
    \Vert\partial_{33}v\Vert^\frac{1}{2}_{L^2}
    \Vert\partial_3(\partial')^{(1-\delta)}v\Vert_{L^2}^\frac{1}{2}
    \Vert\partial_3(\partial')^{(1+\delta)}v\Vert_{L^2}^\frac{1}{2}
    \\&+\PPs 
    \Vert\partial_3(\partial')^{(1-\delta)}v\Vert_{L^2}^\frac{1}{2}
    \Vert\partial_3(\partial')^{(1+\delta)}v\Vert_{L^2}^\frac{1}{2}
    \Vert\partial'v\Vert_{L^2}^\frac{1}{2}
    \Vert\partial'\partial_3v\Vert_{L^2}^\frac{1}{2}
   \\&\indeq
    +\mathcal{P} 
    \Vert\partial'v\Vert_{L^2}^\frac{1}{2}
    \Vert\partial'\partial_3v\Vert_{L^2}^\frac{1}{2}
    \Vert\partial'v\Vert_{L^2}^\frac{1}{2}
    \Vert\partial'''v\Vert_{L^2}^\frac{1}{2}
    \\\lesssim&
    \PPs
    \left(
    \Vert v\Vert_{L^2}^\frac{1}{3}
    \Vert \partial_{333}v\Vert_{L^2}^\frac{1}{6}
    \right)
    \left(
    \Vert v\Vert_{L^2}^\frac{1}{6}
    \Vert \partial_{333}v\Vert_{L^2}^\frac{1}{3}
    \right)
   \\&\indeq\indeq\indeq\indeq\times
    \left(
    \Vert (\partial')^{\frac{3}{2}(1-\delta)}v\Vert_{L^2}^\frac{1}{3}
    \Vert \partial_{333}v\Vert^\frac{1}{6}_{L^2}
    \right)
    \left(
    \Vert (\partial')^{\frac{3}{2}(1+\delta)}v\Vert_{L^2}^\frac{1}{3}
    \Vert \partial_{333}v\Vert^\frac{1}{6}_{L^2}
    \right)
    \\&
    +\PPs
    \left(
    \Vert (\partial')^{\frac{3}{2}(1-\delta)}v\Vert_{L^2}^\frac{1}{3}
    \Vert \partial_{333}v\Vert^\frac{1}{6}_{L^2}
    \right)
    \left(
    \Vert (\partial')^{\frac{3}{2}(1+\delta)}v\Vert_{L^2}^\frac{1}{3}
    \Vert \partial_{333}v\Vert^\frac{1}{6}_{L^2}
    \right)
    \Vert\partial'v\Vert_{L^2}^\frac{1}{2}
    \left(
    \Vert v\Vert_{L^2}^\frac{1}{6}
    \Vert \partial_{333}v\Vert_{L^2}^\frac{1}{3}
    \right)
    \\&
    +\mathcal{P}
    \Vert\partial'v\Vert_{L^2}
    \left(
    \Vert\partial''v\Vert_{L^2}^\frac{1}{4}
    \Vert\partial_{33}v\Vert_{L^2}^\frac{1}{4}
    \right)
    \Vert\partial'''v\Vert_{L^2}^\frac{1}{2}
    ;
    \end{split}
    \label{EQ96b}
\end{align}
in this step
$\delta$ can be chosen arbitrarily in $\left(0,\frac{1}{3}\right)$.
Upon collecting similar terms and invoking Young’s inequality, we observe that $\Vert\partial_3(\partial')^{(1\pm\delta)}v\Vert_{L^2}$ is bounded by
$\mathcal{P}$, we deduce the following estimate
\begin{align}
    \begin{split}
    \Vert K_3\Vert_{L^2_{\HH}}
     \lesssim & \PPs 
     \Vert (\partial')^{\frac{3}{2}(1-\delta)}v\Vert_{L^2}^\frac{1}{3}  
     \Vert (\partial')^{\frac{3}{2}(1+\delta)}v\Vert_{L^2}^\frac{1}{3}
     \Vert v\Vert_{L^2}^\frac{1}{2}
     \Vert\partial_{333}v\Vert^\frac{5}{6}
     \\&
     +\PPs 
     \Vert (\partial')^{\frac{3}{2}(1-\delta)}v\Vert_{L^2}^\frac{1}{3}  
     \Vert (\partial')^{\frac{3}{2}(1+\delta)}v\Vert_{L^2}^\frac{1}{3}
     \Vert v\Vert_{L^2}^\frac{1}{6}
     \Vert \partial'v\Vert_{L^2}^\frac{1}{2}
     \Vert\partial_{333}v\Vert^\frac{2}{3}
     \\&
     +\mathcal{P}
     \Vert\partial'v\Vert_{L^2}
    \Vert\partial''v\Vert_{L^2}^\frac{1}{4}
    \Vert\partial_{33}v\Vert_{L^2}^\frac{1}{4}
    \Vert\partial'''v\Vert_{L^2}^\frac{1}{2}
    \\\lesssim&
    (\PPs+\epsilon)\Vert v\Vert_{H^3} 
    +\mathcal{P}
    .
    \end{split}
    \label{EQ96c}
\end{align}
The treatment of $K_4$ and $K_5$ relies on the incompressibility condition
\begin{align}
    \begin{split}
    \Vert K_4\Vert_{L^2_{\HH}}
    =&
    \left\Vert-\int_0^1\frac{1}{\partial_3\phi}(\partial_3w)^2\,dx_3\right\Vert_{L^2_\HH}
    =
    \left\Vert\int_0^1a_{33}(\partial_1v_1+\partial_2v_2+b_{31}\partial_3v_1+b_{32}\partial_3v_2)^2\,dx_3\right\Vert_{L^2_\HH}
    \\\lesssim &
    \left\Vert a_{33}(\partial_1v_1+\partial_2v_2+b_{31}\partial_3v_1+b_{32}\partial_3v_2)^2\right\Vert_{L^2}
    .
    \end{split}
    \llabel{EQ96d}
\end{align}
After this step, we obtain a similar estimate as for~\eqref{EQ96b}
\begin{align}
    \begin{split}
    \Vert K_4\Vert_{L^2_{\HH}}
    \lesssim & 
    \PPs
    \left\Vert \Vert \partial_3v\Vert_{L^\infty_z}\right\Vert_{L^2_{\HH}}
    \left\Vert \Vert \partial_3 v\Vert_{L^2_z}\right\Vert_{L^\infty_{\HH}}
    +\PPs
    \left\Vert \Vert \partial_3v\Vert_{L^2_z}\right\Vert_{L^\infty_{\HH}}
    \left\Vert \Vert \partial'v\Vert_{L^\infty_z}\right\Vert_{L^2_{\HH}}
    \\&
    +\mathcal{P}
    \left\Vert \Vert \partial'v\Vert_{L^\infty_z}\right\Vert_{L^2_{\HH}}
    \left\Vert \Vert \partial' v\Vert_{L^2_z}\right\Vert_{L^\infty_{\HH}}
    \Vert K_3\Vert_{L^2_{\HH}}
     \\\lesssim & \PPs 
     \Vert (\partial')^{\frac{3}{2}(1-\delta)}v\Vert_{L^2}^\frac{1}{3}  
     \Vert (\partial')^{\frac{3}{2}(1+\delta)}v\Vert_{L^2}^\frac{1}{3}
     \Vert v\Vert_{L^2}^\frac{1}{2}
     \Vert\partial_{333}v\Vert^\frac{5}{6}
     \\&
     +\PPs 
     \Vert (\partial')^{\frac{3}{2}(1-\delta)}v\Vert_{L^2}^\frac{1}{3}  
     \Vert (\partial')^{\frac{3}{2}(1+\delta)}v\Vert_{L^2}^\frac{1}{3}
     \Vert v\Vert_{L^2}^\frac{1}{6}
     \Vert \partial'v\Vert_{L^2}^\frac{1}{2}
     \Vert\partial_{333}v\Vert^\frac{2}{3}
     \\&
     +\mathcal{P}
     \Vert\partial'v\Vert_{L^2}
    \Vert\partial''v\Vert_{L^2}^\frac{1}{4}
    \Vert\partial_{33}v\Vert_{L^2}^\frac{1}{4}
    \Vert\partial'''v\Vert_{L^2}^\frac{1}{2}
    \\\lesssim&
    (\PPs+\epsilon)\Vert v\Vert_{H^3} 
    +\mathcal{P}
    .
    \end{split}
    \label{EQ96e}
\end{align}
For $K_5$ we need to integrate by parts and apply Lemma~\ref {L04}
\begin{align}
    \begin{split}
    \Vert K_5\Vert_{L^2_{\HH}}=&
    \left\Vert-2\int_0^1\nabla_{a,\HH}w\partial_3v\,dx_3\right\Vert_{L^2}\lesssim
    \left\Vert2\int_0^1\nabla_\HH w\partial_3v\,dx_3\right\Vert_{L^2}
    +\left\Vert2\sum_{k=1}^2\int_0^1a_{3k}\partial_3w\partial_3v_k\,dx_3\right\Vert_{L^2}
    \\ \lesssim&
    \left\Vert\nabla_{\HH}(\partial_1v_1+\partial_2v_2+b_{31}\partial_3v_1+b_{32}\partial_3v_2)v
    \right\Vert_{L^2}
    \\&
    +\left\Vert \sum_{k=1}^2a_{3k}(\partial_1v_1+\partial_2v_2+b_{31}\partial_3v_1+b_{32}\partial_3v_2)\partial_3v_k\right\Vert_{L^2}
    \\ \lesssim&
    \Vert \nabla_\HH(\partial_1v_1+\partial_2v_2+b_{31}\partial_3v_1+b_{32}\partial_3v_2)\Vert_{L^2}\Vert v\Vert_{L^\infty}
    +\PPs
    \left\Vert \Vert \partial_3v\Vert_{L^\infty_z}\right\Vert_{L^2_{\HH}}
    \left\Vert \Vert \partial_3 v\Vert_{L^2_z}\right\Vert_{L^\infty_{\HH}}
    \\\lesssim&
    (\Vert\partial''v\Vert_{L^2}+
    \PPs\Vert\partial_{333}v\Vert^\frac{1}{3}
    \Vert(\partial')^\frac{3}{2}v\Vert^\frac{2}{3})
    \Vert v\Vert_{L^2}^\frac{1}{4}
    \Vert \bard''v\Vert_{L^2}^\frac{1}{4}
    \Vert \partial_3v\Vert_{L^2}^\frac{1}{4}
    \Vert \partial_3 \bard''v\Vert_{L^2}^\frac{1}{4}
    \\&+\PPs
    \left\Vert \Vert \partial_3v\Vert_{L^\infty_z}\right\Vert_{L^2_{\HH}}
    \left\Vert \Vert \partial_3 v\Vert_{L^2_z}\right\Vert_{L^\infty_{\HH}}
    .
    \llabel{EQ96f}
    \end{split}
\end{align}
The first term can again be handled by Young’s inequality, while the second term has already been treated in~\eqref{EQ96b} and~\eqref{EQ96c}. Therefore, we have
\begin{align}
    \Vert K_5\Vert_{L^2_{\HH}}\lesssim (\PPs+\epsilon)\Vert v\Vert_{H^3} 
    +\mathcal{P}
    .
    \label{EQ96g}
\end{align}
Combining the estimates \eqref{EQ92}, \eqref{EQ95}, \eqref{EQ96c}, \eqref{EQ96e}, and
\eqref{EQ96g}, we obtain the pressure estimate~\eqref{EQ97}.
\end{proof}

\startnewsection{Time derivative estimates}{sec16}
In this section, we obtain the following time derivative estimate.

\cole
\begin{Lemma}
\label{L08}
Under the assumptions of Theorem~\ref{T01}
and with $\epsilon\in(0,1/2]$,
we have
the time derivative estimate
\begin{align}
\begin{split}
       &\frac{1}{2}\int J \partial_tv_{\alpha}\partial_tv_{\alpha}
        +\iint J\nablah\partial_tv_\alpha \nablah\partial_tv_\alpha
        +\iint\left(\frac{1+b_{31}^2+b_{32}^2}{J}\right)\partial_3\partial_tv_\alpha
        \partial_3\partial_tv_\alpha
        \\&\indeq\indeq
        +\frac{1}{2}\int (\partial_{tt}h)^2
        +\frac{1}{2}\int (\Delta_{\HH}h_t)^2
        \\&\indeq
        \lesssim
        \frac{1}{2}\int J(0) \partial_tv_{\alpha}(0)
        \partial_tv_{\alpha}(0)
        +\frac{1}{2}\int (\partial_{tt}h)^2(0)
        +\frac{1}{2}\int (\Delta_{\HH}h_t)^2(0)
        \\&\indeq\indeq
        +\int_0^t\mathcal{P}
        +\int_0^t(\epsilon+\PPs)(\Vert v_t\Vert_{H^1}^2+\Vert v\Vert_{H^3}^2)
        +\int_0^t\epsilon(\Vert p\Vert ^2_{H^2}+\Vert v\Vert_{H^2}^2)
        +\int \partial_tb_{k\alpha}(0)p(0)\partial_k v_\alpha(0)
        .
\end{split}
\label{EQ103}
\end{align}
\end{Lemma}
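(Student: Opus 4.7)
The plan is to differentiate the $J$-weighted momentum equation \eqref{EQ50} in $t$, take the $L^2$ inner product with $\partial_t v_\alpha$, sum over $\alpha=1,2$, and integrate in time from $0$ to $t$, in parallel with differentiating the plate equation \eqref{EQ07} in $t$ and testing with $\partial_{tt} h$. This mirrors Lemma~\ref{L06} with $\partial_t$ replacing $\partial''$, and produces the energy on the left-hand side of \eqref{EQ103} via the usual two cancellations: first, the $\tfrac12 \int J_t|\partial_t v|^2$ generated by differentiating the kinetic energy is absorbed, as in \eqref{EQ51}, by the integration-by-parts of the transport term $v_\gamma b_{j\gamma}\partial_j v_\alpha + (w-\phi_t)\partial_3 v_\alpha$ using Piola's identity $\partial_k b_{k\alpha}=0$ together with the $J$-multiplied divergence equation $b_{k\alpha}\partial_k v_\alpha=\partial_3 w$; second, the plate test produces $\int_{\mathbb{T}^2} p_t\, \partial_{tt}h$, which must be cancelled against a boundary contribution from the momentum pressure term.

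The main obstacle is the pressure, since $p_t$ admits no direct bound. Writing $\int \partial_t(b_{k\alpha}\partial_k p)\,\partial_t v_\alpha = \int \partial_t b_{k\alpha}\,\partial_k p\,\partial_t v_\alpha + \int b_{k\alpha}\,\partial_k p_t\,\partial_t v_\alpha$, one integrates by parts in space on the second piece and uses Piola to obtain $-\int b_{k\alpha}\partial_k \partial_t v_\alpha\, p_t$. From $b_{k\alpha}\partial_k v_\alpha=\partial_3 w$ this equals $-\int \partial_t \partial_3 w\,p_t + \int \partial_t b_{k\alpha}\,\partial_k v_\alpha\,p_t$. Since $p$ is independent of $z$, the first term reduces to $-\int_{\mathbb{T}^2} p_t\, \partial_t w|_{z=1} = -\int_{\mathbb{T}^2} p_t\, \partial_{tt} h$ by \eqref{EQ08}, cancelling exactly the plate contribution. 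The residual $\int_0^t\!\int \partial_t b_{k\alpha}\,\partial_k v_\alpha\, p_t\,ds$ is then integrated by parts in time,
\begin{align*}
\int_0^t\!\!\int \partial_t b_{k\alpha}\,\partial_k v_\alpha\, p_t\,ds
= \int \partial_t b_{k\alpha}\,\partial_k v_\alpha\, p\, \Big|_0^t
- \int_0^t\!\!\int \bigl(\partial_{tt} b_{k\alpha}\,\partial_k v_\alpha + \partial_t b_{k\alpha}\,\partial_k \partial_t v_\alpha\bigr)\,p\, ds,
\end{align*}
so that the $t=0$ endpoint yields precisely the initial-data term $\int \partial_t b_{k\alpha}(0)\,p(0)\,\partial_k v_\alpha(0)$ appearing on the right of \eqref{EQ103}. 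The $t$-endpoint is bounded by $\mathcal{P}+\epsilon\|p\|_{H^2}^2$ via Lemma~\ref{L02} and Lemma~\ref{L07}, while the remaining time integral is absorbed into $\int_0^t\mathcal{P}+\int_0^t\epsilon(\|v_t\|_{H^1}^2+\|p\|_{H^2}^2)$ by Young's inequality.

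The residual commutators arising from $\partial_t$ acting on $J$, $a$, $b$ in the dissipation together with the piece $\int \partial_t b_{k\alpha}\,\partial_k p\,\partial_t v_\alpha$ and the time-differentiated convection are estimated by the very same anisotropic Hölder and interpolation arguments used in \eqref{EQ551}--\eqref{EQ554} and \eqref{EQ57}--\eqref{EQ27}, with $\partial_t$ playing the role of $\partial''$ and all time derivatives of coefficients controlled via Lemma~\ref{L02} by factors of type $\mathcal{P}$ or $\PPs$. Their total contribution takes the form $\int_0^t\mathcal{P}+\int_0^t(\epsilon+\PPs)(\|v_t\|_{H^1}^2+\|v\|_{H^3}^2)+\int_0^t\epsilon(\|p\|_{H^2}^2+\|v\|_{H^2}^2)$. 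Collecting every piece and integrating in $t$ produces \eqref{EQ103}; the chief analytical difficulty is strictly the pressure step above, since once the $p_t\,\partial_{tt}h$ cancellation is identified and the single integration by parts in $t$ is executed, the remainder is a systematic repetition of the proof of Lemma~\ref{L06}.
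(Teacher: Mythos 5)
Your proposal is correct and follows essentially the same route as the paper's proof: the time-differentiated energy identity for \eqref{EQ50} combined with the time-differentiated plate equation, the cancellation of $\int_{\mathbb{T}^2}p_t\,\partial_{tt}h$ via Piola's identity, the relation $b_{k\alpha}\partial_k v_\alpha=\partial_3 w$ and the $z$-independence of $p$, and a single integration by parts in time on the residual pressure term, which produces the $t=0$ data term $\int \partial_t b_{k\alpha}(0)p(0)\partial_k v_\alpha(0)$, with the remaining commutator and convective terms handled by the same H\"older/anisotropic interpolation tools and Lemma~\ref{L02}. The only cosmetic difference is the order in which you split $\partial_t(b_{k\alpha}\partial_k p)$ before integrating by parts; your residual terms coincide with the paper's $T_{12}$ and $T_{13}$ up to a spatial integration by parts.
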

\colb

\begin{proof}[Proof of Lemma~\ref{L08}]
We apply $\partial_t$ to the equation \eqref{EQ50} and then take the  $L^2$-inner product with $\partial_tv_\alpha$. Summing $\alpha$ from 1 to 2, the energy estimate for $\partial_tv_\alpha$ can be written as
\begin{align}
\begin{split}
       &\frac{1}{2}\frac{d}{dt}\int J \partial_tv_{\alpha}\partial_tv_{\alpha}
        +\int J\nablah\partial_tv_\alpha \nablah\partial_tv_\alpha
        +\int\frac{1+b_{31}^2+b_{32}^2}{J}\partial_3\partial_tv_\alpha
        \partial_3\partial_tv_\alpha
        \\&\indeq\indeq
        +\frac{1}{2}\frac{d}{dt}\int (\partial_{tt}h)^2
        +\frac{1}{2}\frac{d}{dt}\int (\Delta_{\HH}h_t)^2
        \\&\indeq
        = -\int \nabla_{\HH}J\nabla_{\HH}\partial_tv_\alpha
            \partial_tv_\alpha
            +\int J_t\Delta_{\HH}v_\alpha
            \partial_tv_\alpha
        \\&\indeq\indeq
        +2\int\partial_tb_{3\beta}\partial_{3\beta} v_\alpha
        \partial_tv_\alpha
        +2\int b_{3\beta}\partial_{3\beta}\partial_tv_\alpha
        \partial_tv_\alpha
        \\&\indeq\indeq
        +\int \partial_t(b_{kl}\partial_ka_{3l})\partial_3v_\alpha
        \partial_tv_\alpha
        +
        \int b_{kl}\partial_ka_{3l}\partial_3\partial_tv_\alpha
        \partial_tv_\alpha
        \\&\indeq\indeq
        +\int \partial_t\left(\frac{1+b_{31}^2+b_{32}^2}{J}\right)\partial_3^2v_\alpha
        \partial_tv_\alpha
        -\int \partial_3\left(\frac{1+b_{31}^2+b_{32}^2}{J}\right)\partial_3\partial_tv_\alpha
        \partial_tv_\alpha
        \\&\indeq\indeq
        -\int \partial_t v_\gamma b_{j\gamma}\partial_jv_\alpha
        \partial_tv_\alpha
        -\int v_\gamma \partial_tb_{j\gamma}\partial_jv_\alpha
        \partial_tv_\alpha
        \\&\indeq\indeq
        -\int \partial_t(v_3-\phi_t)\partial_3v_\alpha
        \partial_tv_\alpha
        \\&\indeq\indeq
        +\int \partial_tb_{k\alpha}p\partial_k\partial_tv_\alpha
        +\int \partial_tb_{k\alpha}\partial_tp\partial_kv_\alpha
        \\&\indeq
        =\sum_{i=1}^{13}T_i
        .
\end{split}
\label{EQ98}
\end{align}
We estimate the terms above in order. For the first two terms, we
invoke Lemma~\ref{L01} to estimate
\begin{align}
\begin{split}
    T_{1}+T_{2}
    &= -\int \nabla_{\HH}J\nabla_{\HH}\partial_tv_\alpha
            \partial_tv_\alpha
        +\int J_t\Delta_{\HH}v_\alpha
        \partial_tv_\alpha
    \\&\lesssim 
        \Vert\nabla_{\HH}J\Vert_{L^\infty}
        \Vert \nabla_{\HH}\partial_tv_\alpha\Vert_{L^2}
        \Vert\partial_tv_\alpha\Vert_{L^2}
        +
        \Vert J_t\Vert_{L^3}
        \Vert\Delta_{\HH}v_\alpha\Vert_{L^6}
        \Vert\partial_tv_\alpha\Vert_{L^2}
    \\&\lesssim
        \mathcal{P}+\epsilon\Vert\partial_tv\Vert_{H^1}^2
        .
        \llabel{EQ991}
\end{split}
\end{align}
The third term on the right-hand side of \eqref{EQ98} can be estimated by Hölder’s inequality,
\begin{align}
\begin{split}
    T_{3}
    &
    =2\int\partial_tb_{3\beta}\partial_{3\beta} v_\alpha
        \partial_tv_\alpha
    \lesssim
    \Vert \partial_t b\Vert_{L^6}
    \Vert \partial_{3\beta} v_\alpha\Vert_{L^3}
    \Vert \partial_tv_\alpha \Vert_{L^2}
    \\&\lesssim
        \mathcal{P}+\epsilon\Vert v\Vert_{H^3}^2
    .
    \llabel{EQ992}
\end{split}
\end{align}
The fourth term is treated by an integration by parts argument supplemented Lemma~\ref{L02}, because the highest-order derivative involved exceeds the level controlled by our previous estimates,
  \begin{align}
    \begin{split}
    T_4&=2\int b_{3\beta}\partial_{3\beta}\partial_tv_\alpha
        \partial_tv_\alpha
    \\&=-2\int \partial_\beta b_{3\beta}
        \partial_3\partial_tv_\alpha
        \partial_tv_\alpha
        -2\int b_{3\beta}
        \partial_3\partial_tv_\alpha
        \partial_\beta\partial_tv_\alpha
    \\&\lesssim
    \Vert \partial_\beta b_{3\beta} \Vert_{L^\infty}
    \Vert \partial_3\partial_tv_\alpha\Vert_{L^2}
    \Vert \partial_tv_\alpha \Vert_{L^2}
    +
    \Vert b_{3\beta} \Vert_{L^\infty}
    \Vert \partial_3\partial_tv_\alpha\Vert_{L^2}
    \Vert \partial_\beta\partial_tv_\alpha \Vert_{L^2}
    \\&\lesssim
    \mathcal{P}+(\PPs+\epsilon)\Vert v_t\Vert_{H^1}^2
        .
        \llabel{EQ993}
    \end{split}
\end{align}
The combined contribution of the fifth and sixth terms can be controlled by Hölder’s inequality
\begin{align}
    \begin{split}
        T_{5}+T_{6}&=
        \int \partial_t(b_{kl}\partial_ka_{3l})\partial_3v_\alpha
        \partial_tv_\alpha
        +
        \int b_{kl}\partial_ka_{3l}\partial_3\partial_tv_\alpha
        \partial_tv_\alpha
        \\&\lesssim 
        \Vert \partial_tb_{kl}\Vert_{L^6}
        \Vert \partial_ka_{3l}\|_{L^\infty}
        \Vert \partial_3v_\alpha\Vert_{L^3}
        \Vert \partial_t v_\alpha\Vert_{L^2}
        +
        \Vert b_{kl}\Vert_{L^\infty}
        \Vert \partial_t\partial_ka_{3l}\|_{L^3}
        \Vert \partial_3v_\alpha\Vert_{L^6}
        \Vert \partial_t v_\alpha\Vert_{L^2}
        \\&\indeq+\Vert b_{kl}\Vert_{L^\infty}
        \Vert\partial_k a_{3l}\Vert_{L^\infty}
        \Vert\partial_3\partial_t v_\alpha\Vert_{L^2}
        \Vert \partial_t v_\alpha\Vert_{L^2}
        \\&\lesssim
        \mathcal{P}+\epsilon(\Vert v\Vert_{H^3}^2+\Vert v_t\Vert_{H^1}^2)
        .
        \llabel{EQ994}
    \end{split}
\end{align}
For the seventh and eighth terms, the very same argument used for the first two terms yields the bounds
\begin{align}
    \begin{split}
        T_{7}+T_{8}\lesssim \mathcal{P}+\epsilon(\Vert v\Vert_{H^3}^2+\Vert v_t\Vert_{H^1}^2)
    .
    \llabel{EQ995}
    \end{split}
\end{align}
The nonlinear terms are treated in a similar manner using Hölder’s inequality. As an example, we present the estimate for the ninth term
\begin{align}
    \begin{split}
        T_{9}&
        =-\int \partial_t v_\gamma 
        b_{j\gamma}
        \partial_jv_\alpha
        \partial_tv_\alpha
        \lesssim
        \Vert\partial_tv_\gamma\Vert_{L^3}
        \Vert b_{j\gamma}\Vert_{L^\infty}
        \Vert \partial_j v_\alpha\Vert_{L^6}
        \Vert \partial_tv_\alpha\Vert_{L^2}
        \\&\lesssim
        \mathcal{P}+\epsilon(\Vert v_t\Vert^2_{H^1}+\Vert v\Vert_{H^3}^2)
        .
        \llabel{EQ996}
    \end{split}
\end{align}
To estimate the tenth term, we make use of the embedding inequality~\eqref{EQ151}, which follows from the anisotropic estimate,
\begin{align}
    \begin{split}
    T_{10}&=
    -\int v_\gamma 
        \partial_tb_{j\gamma}
        \partial_jv_\alpha
        \partial_tv_\alpha
    \lesssim
        \Vert v_\gamma\Vert_{L^\infty}
        \Vert \partial_t b_{j\gamma}\Vert_{L^3}
        \Vert \partial_jv_\alpha\Vert_{L^6}
        \Vert \partial_t v_\alpha\Vert_{L^2}
    \\&\lesssim
        \Vert h_t\Vert_{H^1}
        \Vert v\Vert_{L^2}^\frac{1}{4}
        \Vert \bard''v\Vert_{L^2}^\frac{1}{4}
        \Vert \partial_3v\Vert_{L^2}^\frac{1}{4}
        \Vert \partial_3 \bard''v\Vert_{L^2}^\frac{1}{4}
        \Vert v\Vert_{H^3}
        \Vert \partial_tv\Vert_{L^2}
    \lesssim
        \mathcal{P}+\epsilon(\Vert v_t\Vert^2_{H^1}+\Vert v\Vert_{H^3}^2)
        .
        \llabel{EQ997}
    \end{split}
\end{align}
In a similar manner, the eleventh term is estimated by invoking the anisotropic interpolation inequality in Lemma~\ref{L03},
\begin{align}
    \begin{split}
        T_{11}&=-\partial_t(w-\phi_t)\partial_3v_\alpha\partial_tv_\alpha
        \\&
        \lesssim
        \left\Vert \Vert \partial_tw\Vert_{L^\infty_z}\right\Vert_{L^2_{\HH}}
        \left\Vert \Vert \partial_3v_\alpha\Vert_{L^2_z}\right\Vert_{L^4_{\HH}}
        \left\Vert \Vert \partial_tv_\alpha\Vert_{L^2_z}\right\Vert_{L^4_{\HH}}
        +\Vert \phi_{tt}\Vert_{L^3}
        \Vert \partial_3v_\alpha\Vert_{L^6}
        \Vert \partial_tv_\alpha\Vert_{L^2}
        \\&
        \lesssim
        \Vert \partial_t\nabla v\Vert_{L^2}
        \Vert \partial_{333}v\Vert ^\frac{1}{3}_{L^2}
        \Vert (\partial')^\frac{3}{4}v\Vert ^\frac{2}{3}_{L^2}
        \Vert v_t\Vert_{L^2}^\frac{1}{2}
        \Vert \partial_t\partial'v\Vert_{L^2}^\frac{1}{2}
        \mathcal{P}
        +\Vert v\Vert_{H^1}^\frac{1}{2}
        \Vert v\Vert_{H^2}^\frac{1}{2}
        \mathcal{P}
        \\&
        \lesssim
        \mathcal{P}+\epsilon (\Vert v\Vert_{H^3}^2+\Vert v_t\Vert_{H^1}^2)
        .
    \end{split}
    \llabel{EQ100}
\end{align}
The estimate for $T_{12}$ only requires integration by parts with respect to $\partial_k$, for $k$ is summed from $1$ to $3$, so that we can apply Piola's identity and write
\begin{align}
    \begin{split}
        \int \partial_tb_{k\alpha}p\partial_k\partial_tv_\alpha
        &=
        -\int\partial_t\partial_kb_{k\alpha}p\partial_t v_\alpha
        -\int\partial_tb_{k\alpha}\partial_k p \partial_tv_\alpha
        =
        -\int\partial_tb_{k\alpha}\partial_k p \partial_tv_\alpha
        \\&\lesssim
        \Vert \partial_tb_{k\alpha}\Vert_{L^3}
        \Vert\partial_kp\Vert_{L^6}
        \Vert \partial_tv_\alpha\Vert_{L^2}
        \lesssim
        \mathcal{P}+\epsilon\Vert p\Vert_{H^2}^2
        ,
    \end{split}
   \llabel{EQ134}
\end{align}
whereas $T_{13}$, after the same operation, additionally requires integration by parts in time,
\begin{align}
  \begin{split}
    -\int_0^t\int_{\Omega}
    \partial_tb_{k\alpha} \partial_tp\partial_k v_\alpha
    &=-\int_\Omega \partial_tb_{k\alpha}p\partial_k v_\alpha\Big|_0^t
    +\int_0^t\int_{\Omega}\partial_{tt}b_{k\alpha}p\partial_kv_\alpha
    +\int_0^t\int_{\Omega}\partial_t b_{k\alpha}p\partial_k\partial_tv_\alpha
    \\&= \left(\tilde{T}_{13}^{(1)}(t)-\tilde{T}_{13}^{(1)}(0)\right)+\tilde{T}_{13}^{(2)}+\tilde{T}_{13}^{(3)}
    .
  \end{split}
    \llabel{EQ101}
\end{align}
Integration by parts and Piola’s identity can be applied to estimate all three terms, when the first integral is evaluated at the time $t$,
\begin{align}
    \begin{split}
        \tilde{T}_{13}^{(1)}(t)&=-\int_\Omega \partial_tb_{k\alpha}p\partial_k v_\alpha
        =\int \partial_t\partial_kb_{k\alpha}
        p
        v_\alpha
        +\int \partial_tb_{k\alpha}
        \partial_k p
        v_\alpha
        =\int \partial_tb_{k\alpha}
        \partial_k p
        v_\alpha
        \\&\lesssim
        \Vert \partial_tb_{k\alpha}\Vert_{L^3}
        \Vert \partial_k p \Vert_{L^6}
        \Vert v_\alpha\Vert_{L^2}
        \lesssim
        \mathcal{P}+\epsilon\Vert p\Vert_{H^2}^2
        .
        \llabel{EQ102a}
    \end{split}
\end{align}
In the estimate of the second term, since $h_{tt}$appears only in the $L^2$-norm within the energy estimate, we need to integrate by parts twice and write
\begin{align}
    \begin{split}
    \tilde{T}_{13}^{(2)}&=\int_0^t\int_{\Omega}\partial_{tt}b_{k\alpha}p\partial_kv_\alpha
    =-\int_0^t\int_\Omega \partial_{tt}(\partial_kb_{k\alpha})
    pv_{\alpha}
    -\int_0^t\int_\Omega\partial_{tt}b_{k\alpha}\partial_kpv_\alpha
    \\&
    =-\sum_{k=1,2}
    \int_0^t\int_\Omega\partial_{tt}b_{k\alpha}\partial_kpv_\alpha
    \\&
    =-\sum_{k=1,2}\sum_{\alpha=1,2}
    \int_0^t\int_\Omega \partial_{tt}\partial_3\phi\partial_kpv_\alpha
    =\sum_{k=1,2}\sum_{\alpha=1,2}
    \int_0^t\int_\Omega \partial_{tt}\phi \partial_kp\partial_3v_\alpha
    \\&
    \lesssim \sum_{k=1,2}
    \int_0^t \left\Vert \Vert \partial_{tt}\phi\Vert_{L^2_z}\right\Vert_{L^4_{\HH}}
    \left\Vert \Vert \partial_kp\Vert_{L^\infty_z}\right\Vert_{L^4_{\HH}}
    \Vert v\Vert_{L^2}^\frac{1}{2}
    \Vert \partial_{33}v\Vert_{L^2}^\frac{1}{2}
    \\&
    \lesssim \int_0^t\mathcal{P}
    +\epsilon\int_0^t(\Vert p\Vert ^2_{H^2}+\Vert v\Vert_{H^2}^2)
    .
    \end{split}
    \llabel{EQ102b}
\end{align}
For the third term, it suffices to use the anisotropic estimate
\begin{align}
    \begin{split}
        \tilde{T}_{13}^{(3)}&=\int_0^t\int_{\Omega}\partial_t b_{k\alpha}p\partial_k\partial_tv_\alpha
        =-\int_0^t\int_\Omega\partial_t\partial_kb_{k\alpha}p\partial_tv_\alpha
        -\int_0^t\int_\Omega\partial_tb_{k\alpha}\partial_kp\partial_tv_\alpha
        \\&=\sum_{k=1,2}-\int_0^t\int_\Omega\partial_tb_{k\alpha}\partial_kp\partial_tv_\alpha
        \lesssim
        \int_0^t\Vert\partial_tb_{k\alpha}\Vert_{L^3}
        \Vert\partial_kp\Vert_{L^6}
        \Vert v_\alpha\Vert_{L^2}
        \\&\lesssim
        \mathcal{P}+\epsilon\Vert p\Vert_{H^2}^2
        .
        \llabel{EQ102c}
    \end{split}
\end{align}
In conclusion, we derive the time derivative estimate,
\begin{align}
\begin{split}
       &\frac{1}{2}\int J \partial_tv_{\alpha}\partial_tv_{\alpha}
        +\iint J\nablah\partial_tv_\alpha \nablah\partial_tv_\alpha
        +\iint\left(\frac{1+b_{31}^2+b_{32}^2}{J}\right)\partial_3\partial_tv_\alpha
        \partial_3\partial_tv_\alpha
        \\&\indeq\indeq
        +\frac{1}{2}\int (\partial_{tt}h)^2
        +\frac{1}{2}\int (\Delta_{\HH}h_t)^2
        \\&\indeq
        \lesssim
        \frac{1}{2}\int J(0) \partial_tv_{\alpha}(0)
        \partial_tv_{\alpha}(0)
        +\frac{1}{2}\int (\partial_{tt}h)^2(0)
        +\frac{1}{2}\int (\Delta_{\HH}h_t)^2(0)
        \\&\indeq\indeq
        +\int_0^t\mathcal{P}
        +\int_0^t(\epsilon+\PPs)(\Vert v_t\Vert_{H^1}^2+\Vert v\Vert_{H^3}^2)
        +\int_0^t\epsilon(\Vert p\Vert ^2_{H^2}+\Vert v\Vert_{H^2}^2)
        +\int \partial_tb_{k\alpha}(0)p(0)\partial_k v_\alpha(0)
        .
\end{split}
\llabel{EQ103}
\end{align}
In conclusion, we derive the time derivative estimate~\eqref{EQ103}.
\end{proof}

\startnewsection{Proof of the main theorem}{sec17}
We are finally ready to conclude the proof of the main theorem.

\begin{proof}[Proof of Theorem~\ref{T01}]
By~\eqref{EQ79}, and after absorbing $\Vert \partial_{333}v\Vert_{L^2}$, we deduce the estimate valid when $t$ is close to zero:
\begin{align}
    \begin{split}
        \Vert \partial_{333}v\Vert_{L^2}
        \lesssim
        (\Vert v\Vert_{H^1}+
        \Vert \partial'v\Vert_{H^1}+
        \Vert \partial''v\Vert_{H^1}
        +\Vert v_t\Vert_{H^1})
        \mathcal{P}
        +(\Vert \partial_{33}v\Vert_{L^2}
        +\Vert \partial_{33}\partial'v\Vert_{L^2})\PPs
        .
    \end{split}
    \label{EQ104}
\end{align}
Combining \eqref{EQ69} and \eqref{EQ74}, we also obtain
\begin{align}
    \Vert \partial_{33}\partial'v\Vert_{L^2}+\Vert \partial_{33}v\Vert_{L^2}
    \lesssim
    (\Vert v\Vert_{H^1}+
        \Vert \partial'v\Vert_{H^1}+
        \Vert \partial''v\Vert_{H^1}
        +\Vert v_t\Vert_{H^1})
        \mathcal{P}
        +\Vert p\Vert_{H^2}
        \mathcal{P}
        +\mathcal{P}
        .
        \label{EQ105}
\end{align}
Keeping in mind that the elliptic estimate of $\Vert
\partial_{333}v\Vert_{L^2}$ is independent of $\Vert p\Vert_{H^2}$, by
substituting \eqref{EQ104} and \eqref{EQ105} into \eqref{EQ97}, we deduce
\begin{align}
    \Vert p\Vert_{H^2}\lesssim
    (\Vert v\Vert_{H^1}+
        \Vert \partial'v\Vert_{H^1}+
        \Vert \partial''v\Vert_{H^1}
        +\Vert v_t\Vert_{H^1})
        (\PPs+\epsilon)
        +\mathcal{P}
        .
        \label{EQ106}
\end{align}
Inserting \eqref{EQ104}, \eqref{EQ105}, and \eqref{EQ106} into \eqref{EQ64},
we arrive at a simpler tangential estimate
\begin{align}
    \begin{split}
        &\frac{1}{2}\frac{d}{dt}\int J \partial''v_{\alpha}\partial''v_{\alpha}
        +\int J\nablah\partial''v_\alpha\nablah\partial''v_\alpha
        +\int\left(\frac{1+b_{31}^2+b_{32}^2}{J}\right)\partial_3\partial''v_\alpha\partial_3\partial''v_\alpha
        \\&\indeq
        +\frac{1}{2}\frac{d}{dt}\int (\partial'' \partial_th)^2
        +\frac{1}{2}\frac{d}{dt}\int (\partial''\Delta_{\HH}h)^2
	\\&\indeq
        \lesssim 
        \mathcal{P}
        +\epsilon\Vert v_t\Vert_{H^1}^2
        +\epsilon(\Vert v\Vert_{H^1}+
        \Vert \partial'v\Vert_{H^1}+
        \Vert \partial''v\Vert_{H^1}
        +\Vert v_t\Vert_{H^1})^2
        \PPs
        .
    \end{split}
    \llabel{EQ107}
\end{align}
An analogous argument applies to the time estimate
\begin{align}
\begin{split}
       &\frac{1}{2}\int J \partial_tv_{\alpha}\partial_tv_{\alpha}
        +\iint J\nablah\partial_tv_\alpha \nablah\partial_tv_\alpha
        +\iint\left(\frac{1+b_{31}^2+b_{32}^2}{J}\right)\partial_3\partial_tv_\alpha
        \partial_3\partial_tv_\alpha
        \\&\indeq
        +\frac{1}{2}\int (\partial_{tt}h)^2
        +\frac{1}{2}\int (\Delta_{\HH}h_t)^2
        \\&\indeq
        \lesssim
        \mathcal{P}_0
        +\int_0^t\mathcal{P}
        +\int_0^t
        (\Vert v\Vert_{H^1}+
        \Vert \partial'v\Vert_{H^1}+
        \Vert \partial''v\Vert_{H^1})^2
        \PPs
        .
\end{split}
\llabel{EQ108}
\end{align}
Choosing $\epsilon$ sufficiently small so that $\Vert v_t\Vert_{H^1}$
can be absorbed, we obtain the final a~priori estimate
\begin{align}
    \begin{split}
        &\frac{1}{2}\int J \partial''v_{\alpha}\partial''v_{\alpha}
        +\iint J\nablah\partial''v_\alpha\nablah\partial''v_\alpha
        +\iint\left(\frac{1+b_{31}^2+b_{32}^2}{J}\right)\partial_3\partial''v_\alpha\partial_3\partial''v_\alpha
        \\&\indeq\indeq
        +\frac{1}{2}\int J \partial_tv_{\alpha}\partial_tv_{\alpha}
        +\iint J\nablah\partial_tv_\alpha \nablah\partial_tv_\alpha
        +\iint\left(\frac{1+b_{31}^2+b_{32}^2}{J}\right)\partial_3\partial_tv_\alpha
        \partial_3\partial_tv_\alpha
        \\&\indeq\indeq
        +\frac{1}{2}\int (\partial'' \partial_th)^2
        +\frac{1}{2}\int (\partial''\Delta_{\HH}h)^2
        +\frac{1}{2}\int (\partial_{tt}h)^2
        \\&\indeq
        \lesssim
        \mathcal{P}_0
        +\int_0^t\mathcal{P}
        .
    \end{split}
    \label{EQ109}
\end{align}
In order to control all spatial derivatives up to second order, we need to introduce the $L^2$ energy estimate and then apply interpolation.
Arguing as in the previous tangential estimates, we deduce
\begin{align}
    \begin{split}
       &\frac{1}{2}\frac{d}{dt}\int J v_{\alpha}v_{\alpha}
        +\int J\nablah v_\alpha\nablah v_\alpha
        +\int\frac{1+b_{31}^2+b_{32}^2}{J}\partial_3v_\alpha\partial_3v_\alpha
        +\frac{1}{2}\frac{d}{dt}\int ( \partial_th)^2
        +\frac{1}{2}\frac{d}{dt}\int (\Delta_{\HH}h)^2
	\\&\indeq
	=-\int\nablah J\nablah v_\alpha v_\alpha
        -\int\partial_3\left(\frac{1+b_{31}^2+b_{32}^2}{J}\right)\partial_3 v_\alpha v_\alpha
	.
    \end{split}
    \label{EQ110}
\end{align}
By directly applying Hölder’s inequality together with Young’s inequality, we obtain
\begin{align}
    \begin{split}
        &-\int\nablah J\nablah v_\alpha v_\alpha
        -\int\partial_3\left(\frac{1+b_{31}^2+b_{32}^2}{J}\right)\partial_3 v_\alpha v_\alpha
        \\&\indeq
        \lesssim \Vert\nablah J\Vert_{L^\infty}
        \Vert\nablah v\Vert_{L^2}
        \Vert v\Vert_{L^2}
        +\Vert \partial_3\left(\frac{1+b_{31}^2+b_{32}^2}{J}\right)\Vert_{L^\infty}
        \Vert\partial_3v\Vert_{L^2}
        \Vert v\Vert_{L^2}
        \\&\indeq
        \lesssim \mathcal{P}+\epsilon\Vert v\Vert_{H^1}^2
        .
        \label{EQ111}
    \end{split}
\end{align}
After integrating both sides of inequalities~\eqref{EQ110} and~\eqref{EQ111} with respect to time from 0 to $t$ and adding them to~\eqref{EQ109}, and by applying Lemma~\ref{L02} we obtain
\begin{align}
    \begin{split}
        &\int  \partial''v_{\alpha}\partial''v_{\alpha}
        +\iint \nablah\partial''v_\alpha\nablah\partial''v_\alpha
        +\iint \partial_3\partial''v_\alpha\partial_3\partial''v_\alpha
        \\&\indeq\indeq
        +\int  \partial_tv_{\alpha}\partial_tv_{\alpha}
        +\iint \nablah\partial_tv_\alpha \nablah\partial_tv_\alpha
        +\iint \partial_3\partial_tv_\alpha
        \partial_3\partial_tv_\alpha
        \\&\indeq\indeq
        +\int  v_{\alpha}v_{\alpha}
        +\iint \nablah v_\alpha\nablah v_\alpha
        +\iint \partial_3v_\alpha\partial_3v_\alpha
        \\&\indeq\indeq
        +\int (\partial'' \partial_th)^2
        +\int (\partial''\Delta_{\HH}h)^2
        +\int (\partial_{tt}h)^2
        +\int ( \partial_th)^2
        +\int (\Delta_{\HH}h)^2
        \\&\indeq
        \lesssim
        \mathcal{P}_0
        +\int_0^t\mathcal{P}
        .
    \end{split}
    \label{EQ112}
\end{align}
In view of 
\begin{align}
    \Vert\partial'v\|^2_{L^2}
    \lesssim
    \Vert v\Vert_{L^2}
    \Vert\partial''v\Vert_{L^2}
    \lesssim
    \Vert v\Vert_{L^2}^2
    +\Vert\partial''v\Vert_{L^2}^2
    ,
  \label{EQ113}  
\end{align}
 the polynomial $\mathcal{P}$ can be controlled by a polynomial expression $\tilde{\mathcal{P}}$ that does not depend on~$\Vert\partial'v\Vert_{L^2}$.
For $\mathcal{P}_0$, an analogous quantity $\tilde{\mathcal{P}_0}$ can be defined in the same manner. Applying a Gronwall argument on~\eqref{EQ112}, we obtain
an estimate
\begin{align}
\begin{split}
    &\sup_{0\leq t\leq \TT} \left(\|\partial''v\|^2_{L^2}
    +\|v\|^2_{L^2}+\|v_t\|^2_{L^2}
    +\|h_{tt}\|^2_{L^2}+\|h_t\|_{H^2}^2+\|h\|^2_{H^4}\right)
    \\&\indeq
    +\int_0^{\TT} \left(\|\partial''v\|^2_{H^1}+\|\partial'\partial_3v\|^2_{L^2}
    +\|\partial_3v\|^2_{L^2}+\|v_t\|^2_{H^1}\right)\, dt
    \leq K
    ,
\end{split}
   \llabel{EQ114}
\end{align}
where $K$ is an explicit polynomial of~$M_0$, for $\tilde{T}$
depending on~$M_0$. By using \eqref{EQ113}, it follows that 
\begin{align}
    \sup_{0\leq t\leq \TT}\Vert\partial'v\Vert_{L^2} \leq K
   \llabel{EQ135}
\end{align}
as well, and the proof of the theorem is concluded.
\end{proof}

\section*{Acknowledgments}
\rm
IK and QX were supported in part by the NSF grant DMS-2205493,
while MH acknowledges the support by DFG through Reseach Unit FOR5528.

\colb
\small

\colb

\ifnum\sketches=1
\newpage
\begin{center}
  \bf   Notes?\rm
\end{center}
\huge
\colb

\fi


\begin{thebibliography}{10}
\bibitem{AS} 
D.M.~Ambrose and M.~Siegel,
  \emph{Well-posedness of two-dimensional hydroelastic waves},
  Proc. Roy. Soc. Edinburgh Sect.~A~\textbf{147} (2017), no.~3, 529--570.
  

\bibitem{B} 
H.~Beir\~ao~da Veiga,
  \emph{On the existence of strong solutions to a coupled fluid-structure evolution problem},
  J.~Math. Fluid Mech. \textbf{6} (2004), no.~1, 21--52.

\bibitem{BKMT}
A.~Balakrishna, I.~Kukavica, B.~Muha, and A.~Tuffaha,
\emph{Inviscid fluid interacting with a nonlinear two-dimensional plate},
Interfaces Free Bound.~\textbf{27} (2025), 141–-175.


\bibitem{BH} 
T.~Binz and M.~Hieber,
  \emph{Global wellposedness of the primitive equations with nonlinear equation of state in critical spaces},
  J. Math. Fluid Mech.~\textbf{24} (2022), no.~2, Paper No.~36,~18.


\bibitem{BS} 
D.~Breit and S.~Schwarzacher, 
\emph{Compressible fluids interacting with a linear-elastic shell}, 
Arch.\ Ration.\ Mech.\ Anal.~\textbf{228} (2018), no.~2, 495--562. 
  


\bibitem{CDEG} 
A.~Chambolle, B.~Desjardins, M.J.~Esteban, and C.~Grandmont,
  \emph{Existence of weak solutions for the unsteady interaction of a viscous fluid with an elastic plate},
  J.~Math.\ Fluid Mech.~\textbf{7} (2005), no.~3, 368--404.

\bibitem{CINT} 
C.~Cao, S.~Ibrahim, K.~Nakanishi, and E.S. Titi,
  \emph{Finite-time blowup for the inviscid primitive equations of oceanic and   atmospheric dynamics},
  Comm. Math. Phys.~\textbf{337} (2015), no.~2, 473--482.

\bibitem{CLT1} 
C.~Cao, J.~Li, and E.~S.~Titi,
  \emph{Local and global well-posedness of strong solutions to the 3{D} primitive equations with vertical eddy diffusivity},
  Arch. Ration. Mech. Anal.~\textbf{214} (2014), no.~1, 35--76.

\bibitem{CLT2} 
C.~Cao, J.~Li, and E.S.~Titi,
  \emph{Global well-posedness of strong solutions to the 3{D} primitive equations with horizontal eddy diffusivity},
  J.~Differential Equations \textbf{257} (2014), no.~11, 4108--4132.


\bibitem{CT} 
C.~Cao and E.S. Titi,
  \emph{Global well-posedness of the three-dimensional viscous primitive equations of large scale ocean and atmosphere dynamics},
  Ann. of Math.~(2)~\textbf{166} (2007), no.~1, 245--267.

\bibitem{CCS} 
C.H.~Arthur Cheng, D.~Coutand, and S.~Shkoller,
  \emph{Navier-Stokes equations interacting with a nonlinear elastic biofluid shell},
  SIAM J.~Math.\ Anal.~\textbf{39} (2007), no.~3, 742--800.

\bibitem{CS} 
C.H.~Arthur Cheng and S.~Shkoller,
  \emph{The interaction of the 3{D} Navier-Stokes equations with a moving nonlinear {K}oiter elastic shell},
  SIAM J.~Math.\ Anal.~\textbf{42} (2010), no.~3, 1094--1155.


\bibitem{DEGL} 
B.~Desjardins, M.J.~Esteban, C.~Grandmont, and P.~Le~Tallec,
\emph{Weak solutions for a fluid-elastic structure interaction model},
Rev.\ Mat.\ Complut.~\textbf{14} (2001), no.~2, 523--538.




\bibitem{GM} 
C.~Grandmont and Y.~Maday,
\emph{Existence for an unsteady fluid-structure interaction problem},
M2AN Math.\ Model.\ Numer.\ Anal.~\textbf{34} (2000), no.~3, 609--636.



  
\bibitem{GH} 
C.~Grandmont and M.~Hillairet,
  \emph{Existence of global strong solutions to a beam-fluid interaction system},
  Arch.\ Ration.\ Mech.\ Anal.~\textbf{220} (2016), no.~3, 1283--1333.

\bibitem{GHL} 
C.~Grandmont, M.~Hillairet, and J.~Lequeurre,
  \emph{Existence of local strong solutions to fluid-beam and fluid-rod interaction systems},
  Ann. Inst. H.~Poincar\'{e} C Anal.\ Non Lin\'{e}aire~\textbf{36} (2019), no.~4, 1105--1149.  


 


\bibitem{HC} 
M.R.~Haddara and S.~Cao,
\emph{A study of the dynamic response of submerged rectangular flat plates},
{Marine Structures}~\textbf{9} (1996), no.~10, 913--933.


\bibitem{H} 
W.D.~Hibler~III,
\emph{A dynamic thermodynamic sea ice model},
{J.~Phys.~Oceanogr.}~\textbf{9} (1979), 817--846.

\bibitem{HIRZ} 
M.~Hieber, Y.~Iida, A.~Roy, and T.~Z\"ochling,
  \emph{The hydrostatic {L}agrangian approach to the compressible primitive equations},
  Math. Ann.~\textbf{392} (2025), no.~2, 2277--2308.


\bibitem{HK} 
M.~Hieber and T.~Kashiwabara,
  \emph{Global strong well-posedness of the three dimensional primitive equations in {$L^p$}-spaces},
  Arch. Ration. Mech. Anal.~\textbf{221} (2016), no.~3, 1077--1115.


\bibitem{Hu} 
K.~Hutter,
\emph{Floating sea ice plates and the significance of the dependence of the Poisson ratio on brine content},
{Proc.~R.~Soc.~Lond.~A}~\textbf{343} (1975), 85--108.

\bibitem{HRLTCT} 
L.~Huang, K.~Ren, M.~Li, Ž.~Tukovi\'c, P.~Cardiff, and G.~Thomas,
\emph{Fluid-structure interaction of a large ice sheet in waves},
{Ocean Eng.}~\textbf{182} (2019), 102--111.


\bibitem{Hun} 
E.C.~Hunke,
\emph{Viscous-plastic sea ice dynamics with the EVP model: linearization issues},
{J.~Comput.~Phys.}~\textbf{170} (2001), 18--38.

\bibitem{J.}
J. H. Jungclaus et al,
\emph{The ICON Earth System Model Version 1.0},
Journal Advances in Modeling Earth Sciences~\textbf{14} (2022)

\bibitem{K} 
T.~Kakinuma,
\emph{A nonlinear numerical model for the interaction of surface and internal waves with very large floating or submerged flexible platforms},
{Trans.\ Built Environ.}~\textbf{56} (2001), WIT Press, \url{https://www.witpress.com}, ISSN~1743-3509.


\bibitem{KPV} 
A.~Korobkin, E.~I.~Parau, and J.~Vanden-Broeck,
\emph{The mathematical challenges and modelling of hydroelasticity},
{Philos.~Trans.~R.~Soc.~A}~\textbf{369} (2011), 2803--2812.

\bibitem{KLT1} 
I.~Kukavica, L.~Li, and A.~Tuffaha,
  \emph{On the local existence of solutions to the fluid-structure interaction problem with a free interface},
  Appl. Math. Optim.~\textbf{90} (2024), no.~3, Paper No.~53,~31.

\bibitem{KLT2} 
I.~Kukavica, L.~Li, and A.~Tuffaha,
  \emph{On the local existence of solutions to the compressible {N}avier-{S}tokes-wave system with a free interface},
  J.~Math. Fluid Mech.~\textbf{26} (2024), no.~2, Paper No. 25, 37.

\bibitem{KMVW} 
I.~Kukavica, N.~Masmoudi, V.~Vicol, and T.K.~Wong,
  \emph{On the local well-posedness of the {P}randtl and hydrostatic {E}uler equations with multiple monotonicity regions},
  SIAM J.~Math. Anal.~\textbf{46} (2014), no.~6, 3865--3890.

\bibitem{KNT}
I.~Kukavica, \v S\'arka Ne\v casov\'a, and Amjad Tuffaha,
\emph{Compressible Euler Equations in an elastic domain}
{Indiana Univ.\ Math.~J.}
(to appear),
arXiv:2311.08731.

\bibitem{KT1} 
I.~Kukavica and A.~Tuffaha,
  \emph{A free boundary inviscid model of flow-structure interaction},
  J.~Differential Equations \textbf{413} (2024),
  851--912.

\bibitem{KT2} 
I.~Kukavica and A.~Tuffaha,
  \emph{An inviscid free boundary fluid-wave model},
  J.~Evol. Equ. \textbf{23} (2023), no.~2, Paper No. 41, 18.

\bibitem{L} 
D.~Lengeler,
  \emph{Weak solutions for an incompressible, generalized {N}ewtonian fluid interacting with a linearly elastic {K}oiter type shell},
  SIAM J.~Math.\ Anal.~\textbf{46} (2014), no.~4, 2614--2649.


\bibitem{LR} 
D.~Lengeler and M.~R\r{u}\v{z}i\v{c}ka,
  \emph{Weak solutions for an incompressible {N}ewtonian fluid interacting with a {K}oiter type shell},
  Arch.\ Ration.\ Mech.\ Anal.~\textbf{211} (2014), no.~1, 205--255.
 
\bibitem{L1} 
J.~Lequeurre,
\emph{Existence of strong solutions to a fluid-structure system},
SIAM J.~Math.\ Anal.~\textbf{43} (2011), no.~1, 389--410.


\bibitem{L2} 
J.~Lequeurre,
\emph{Existence of strong solutions for a system coupling the Navier-Stokes equations and a damped wave equation},
J.~Math.\ Fluid Mech.~\textbf{15} (2013), no.~2, 249--271.

\bibitem{LL} 
H.-L.~Li and C.~Liang,
  \emph{Global existence and large-time behavior for primitive equations with the free boundary},
  Sci. China Math.~\textbf{67} (2024), no.~10, 2303--2330.

\bibitem{IKZ} 
M.~Ignatova, I.~Kukavica, and M.~Ziane,
  \emph{Local existence of solutions to the free boundary value problem for the primitive equations of the ocean},
  J.~Math. Phys.~\textbf{53} (2012), no.~10, 103101, 17.

\bibitem{KTVZ} 
I.~Kukavica, R.~Temam, V.C.~Vicol, and M.~Ziane,
  \emph{Local existence and uniqueness for the hydrostatic {E}uler equations on a bounded domain},
  J.~Differential Equations~\textbf{250} (2011), no.~3, 1719--1746.

\bibitem{KZ} 
I.~Kukavica and M.~Ziane,
  \emph{On the regularity of the primitive equations of the ocean},
  Nonlinearity~\textbf{20} (2007), no.~12, 2739--2753.

\bibitem{LT1} 
J.~Li and E.S.~Titi,
  \emph{Existence and uniqueness of weak solutions to viscous primitive equations for a certain class of discontinuous initial data},
  SIAM J. Math. Anal.~\textbf{49} (2017), no.~1, 1--28.

\bibitem{LT2} 
J.~Li and E.S.~Titi,
  \emph{The primitive equations as the small aspect ratio limit of the {N}avier-{S}tokes equations: rigorous justification of the hydrostatic approximation},
  J.~Math. Pures Appl.~(9)~\textbf{124} (2019), 30--58.

\bibitem{LTW1} 
J.-L.~Lions, R.~Temam, and S.H.~Wang,
  \emph{New formulations of the primitive equations of atmosphere and applications},
  Nonlinearity~\textbf{5} (1992), no.~2, 237--288.

\bibitem{LTW2} 
J.-L.~Lions, R.~Temam, and S.H.~Wang,
  \emph{On the equations of the large-scale ocean},
  Nonlinearity~\textbf{5} (1992), no.~5, 1007--1053.

\bibitem{LTW3} 
J.-L.~Lions, R.~Temam, and S.~Wang,
  \emph{Models for the coupled atmosphere and ocean. ({CAO} {I},{II})},
  Comput. Mech. Adv.~\textbf{1} (1993), no.~1, 120.

\bibitem{LTW4} 
J.-L.~Lions, R.~Temam, and S.~H.~Wang,
  \emph{Mathematical theory for the coupled atmosphere-ocean models. ({CAO} {III})},
  J.~Math. Pures Appl.~(9)~\textbf{74} (1995), no.~2, 105--163.

\bibitem{LA} 
S.~Liu and D.M.~Ambrose,
  \emph{Well-posedness of two-dimensional hydroelastic waves with mass},
  J.~Differential Equations~\textbf{262} (2017), no.~9, 4656--4699.



\bibitem{MRR} 
  D.~Maity, J.-P.~Raymond, and A.~Roy,
  \emph{Maximal-in-time existence and uniqueness of strong solution of a 3{D} fluid-structure interaction model},
  SIAM J.~Math.\ Anal.~\textbf{52} (2020), no.~6, 6338--6378.
 
\bibitem{MW} 
N.~Masmoudi and T.K.~Wong,
  \emph{On the {$H^s$} theory of hydrostatic {E}uler equations},
  Arch. Ration. Mech. Anal.~\textbf{204} (2012), no.~1, 231--271.
  

\bibitem{Mi} 
S.~Mitra,
  \emph{Local existence of strong solutions of a fluid-structure interaction model},
  J.~Math. Fluid Mech.~\textbf{22} (2020), no.~4, Paper No.~60,~38.

\bibitem{MHJLR} 
S.J.~Marsland, H.~Haak, J.H.~Jungclaus, M.~Latif, and F.~R\"oske,
\emph{The Max-Planck-Institute global ocean/sea ice model with orthogonal curvilinear coordinates},
{Ocean Model.}~\textbf{5} (2003), no.~2, 91--127.

\bibitem{MC1} 
B.~Muha and S.~\v Cani\'{c},
\emph{Existence of a weak solution to a nonlinear fluid-structure interaction problem modeling the flow of an incompressible, viscous fluid in a cylinder with deformable walls},
  Arch.\ Ration.\ Mech.\ Anal.~\textbf{207} (2013), no.~3, 919--968.


\bibitem{MC2} 
B.~Muha and S.~\v{C}ani\'{c},
  \emph{Existence of a weak solution to a fluid-elastic structure interaction problem with the Navier-slip boundary condition},
  J.~Differential Equations~\textbf{260} (2016), no.~12, 8550--8589.


\bibitem{MS} 
  B.~Muha and S.~Schwarzacher,
  \emph{Existence and regularity of weak solutions for a fluid interacting with a non-linear shell in three dimensions},
  Ann.\ Inst.\ H.~Poincar\'{e} C Anal.~Non Lin\'{e}aire~\textbf{39} (2022), no.~6, 1369--1412.


\bibitem{PB} 
T.K.~Papathanasiou and K.A.~Belibassakis,
\emph{Hydroelastic analysis of VLFS based on a consistent coupled-mode system and FEM},
{IES J.~Part~A:~Civil \& Structural Eng.}~\textbf{7} (2014), no.~3, 195--206.




 


\bibitem{PV} 
E.I.~P\u{a}r\u{a}u and J.-M.~Vanden-Broeck,
\emph{Three-dimensional waves beneath an ice sheet due to a steadily moving pressure},
{Philos.~Trans.~R.~Soc.~A}~\textbf{369} (2011), 2973--2988.



\bibitem{SS} 
S.~Schwarzacher and M.~Sroczinski,
  \emph{Weak-strong uniqueness for an elastic plate interacting with the {N}avier-{S}tokes equation},
  SIAM J.~Math. Anal.~\textbf{54} (2022), no.~4, 4104--4138.


 
\bibitem{SSu}
S.~Schwarzacher and P.~Su,
\emph{Existence of strong solutions for a perfect elastic beam interacting with Navier-Stokes equations},
arXiv:2308.04253.

\bibitem{PTZ} 
M.~Petcu, R.M.~Temam, and M.~Ziane,
  \emph{Some mathematical problems in geophysical fluid dynamics},
  Handbook of numerical analysis.   {V}ol. {XIV}. {S}pecial volume: computational methods for the atmosphere and
  the oceans, Handb. Numer. Anal., vol.~14, Elsevier/North-Holland, Amsterdam,
  2009, pp.~577--750.

\bibitem{P} 
R.~Porter,
\emph{The coupling between ocean waves and rectangular ice sheets},
{J.~Fluids Struct.}~\textbf{84} (2019), 171--181.

\bibitem{R} 
M.~Renardy,
  \emph{Ill-posedness of the hydrostatic {E}uler and {N}avier-{S}tokes equations},
  Arch. Ration. Mech. Anal.~\textbf{194} (2009), no.~3, 877--886.

\bibitem{S} 
V.A.~Squire,
\emph{Ocean wave interactions with sea ice: a reappraisal},
{Annu.~Rev.~Fluid Mech.}~\textbf{52} (2020), 37--60.

\bibitem{SDWRL} 
V.A.~Squire, G.P.~Dugan, G.P.~Wadhams, P.J.~Rottier, and A.K.~Liu,
\emph{Of ocean waves and sea ice},
{Annu.~Rev.~Fluid Mech.}~\textbf{27} (1995), 115--168.


\bibitem{THAB} 
S.~Tavakoli, L.~Huang, F.~Azhari, and A.V.~Babanin,
\emph{Viscoelastic wave-ice interactions: a computational fluid-solid dynamic approach},
{J.~Mar.~Sci.~Eng.}~\textbf{10} (2022), 1220.


 

\bibitem{Tr} 
S.~Trifunovi\'c,
  \emph{Compressible fluids interacting with plates: regularity and weak-strong uniqueness},
  J. Math. Fluid Mech.~\textbf{25} (2023), no.~1, Paper No.~13, 28.

\bibitem{TW} 
S.~Trifunovi\'c and Y.-G.~Wang,
  \emph{Existence of a weak solution to the fluid-structure interaction problem in 3{D}},
  J.~Differential Equations~\textbf{268} (2020), no.~4, 1495--1531.


\bibitem{W} 
T.K.~Wong,
  \emph{Blowup of solutions of the hydrostatic {E}uler equations},
  Proc. Amer. Math. Soc.~\textbf{143} (2015), no.~3, 1119--1125.

\bibitem{WY} 
Z.~Wang and J.~Yang,
  \emph{Energy estimates and local well-posedness of 3{D} interfacial hydroelastic waves between two incompressible fluids},
  J.~Differential Equations~\textbf{269} (2020), no.~7, 6055--6087.

  

\bibitem{XEK} 
D.~Xia, R.~C.~Ertekin, and J.~W.~Kim,
\emph{Fluid-structure interaction between a two-dimensional mat-type VLFS and solitary waves by the Green--Naghdi theory},
{J.~Fluids Struct.}~\textbf{24} (2008), no.~4, 527--540.


\bibitem{XW} 
P.~Xu and P.R.~Wellens,
\emph{Theoretical analysis of nonlinear fluid--structure interaction between large-scale polymer offshore floating photovoltaics and waves},
{Ocean Eng.}~\textbf{249} (2022), 110829.

\end{thebibliography}
\end{document}